\author{Antoine Ducros}
\address{Sorbonne Université, Université Paris-Diderot, CNRS, Institut de Mathématiques de Jussieu-Paris
Rive Gauche, Campus Pierre et Marie Curie, case 247, 4 place Jussieu, 75252 Paris cedex 5, France}
\email{antoine.ducros\at imj-prg.fr}
\urladdr{https://webusers.imj-prg.fr/$\sim$antoine.ducros/}
\author{Ehud Hrushovski}
\address{Mathematical Institute,
University of Oxford,
Andrew Wiles Building,
Radcliffe Observatory Quarter
Woodstock Road, 
Oxford
OX2 6GG, UK}
\email{Ehud.Hrushovski\at maths.ox.ac.uk}
\urladdr{https://www.maths.ox.ac.uk/people/ehud.hrushovski}
\author{François Loeser}
\address{Institut universitaire de France, Sorbonne Université, Institut de Mathématiques de Jussieu-Paris
Rive Gauche CNRS, Campus Pierre et Marie Curie, case 247, 4 place Jussieu, 75252 Paris cedex 5, France
}
\email{francois.loeser@imj-prg.fr}
\urladdr{https://webusers.imj-prg.fr/$\sim$francois.loeser/}
\title{Non-archimedean integrals as limits of
complex integrals}
\newcommand{\eg}{e.\@g.\@}
\newcommand{\ie}{i.\@e.\@}
\newcommand{\loccit}{loc.\@~cit.\@}
\newcommand{\resp}{resp.\@~}
\newcommand{\Prop}{Prop.\@~}
\newcommand{\abs}[1]{\mathopen|#1\mathclose|}
\newcommand{\an}{^{\mathrm{an}}}
\newcommand{\da}{\d\!\arg}
\newcommand{\Arg}{\mathop{\mathrm{Arg}}}
\newcommand{\dArg}{\d\!\Arg}
\newcommand{\daa}{\frac{\da}{2\pi}}
\newcommand\dab[1]{\frac{\d\!\arg{#1}}{2\pi}}
\newcommand{\dc}{\d''}
\newcommand{\di}{\d'}
\newcommand{\dl}{\d\!\loga}
\newcommand{\dll}{\d\!\log}
\newcommand{\ds}{\d^\sharp}
\newcommand{\gm}{\mathbf G_{\mathrm m}}
\newcommand{\gpm}{^{\times}}
\newcommand{\inv}{^{-1}}
\newcommand{\loga}{\mathop{\mathrm{Log}}}
\newcommand{\logb}{\mathop{\mathrm{Log}}\nolimits_\flat}
\newcommand{\std}{\mathrm{std}}
\newcommand{\val}[1]{\abs{#1}_{\flat}}
\def\llp{\mathopen{(\!(}}
\def\rrp{\mathopen{)\!)}}
\newcommand{\A}{\mathbf A}
\newcommand{\C}{\mathbf C}
\newcommand{\N}{\mathbf Z_{\geq 0}}
\renewcommand{\P}{\mathbf P}
\newcommand{\Q}{\mathbf Q}
\newcommand{\R}{\mathbf R}
\newcommand{\Z}{\mathbf Z}
\renewcommand{\d}{\mathrm d}
\renewcommand{\phi}{\varphi}
\renewcommand{\epsilon}{\varepsilon}
\renewcommand{\leq}{\leqslant}
\renewcommand{\geq}{\geqslant}
\long\def\FL#1{{\color{black} #1}}
\begin{document}
\begin{abstract}
We explain how  non-archimedean integrals considered by Chambert–Loir and Ducros
naturally arise in asymptotics of families of complex integrals. To perform this analysis we
work over a non-standard model of the field of complex numbers, which is endowed at the
same time with an archimedean and a non-archimedean norm. Our main result states the
existence of a natural morphism between bicomplexes of archimedean and non-archimedean
forms which is compatible with integration. 
\end{abstract}
\maketitle

\tableofcontents

\section{Introduction}
\subsection{}A. Chambert-Loir and A. Ducros recently
developed 
a full-fledged theory of real valued $(p,q)$-forms and currents on Berkovich spaces which is
an analogue of the theory of differential forms on complex spaces \cite{chambertloir-d2012}. 
Their  forms are constructed as pullbacks under tropicalisation maps of the ``superforms''
introduced by  Lagerberg
\cite{lagerberg12}. 
They are able to integrate compactly supported $(n,n)$-forms for $n$ the dimension of the ambient space (the output being a real number)
and they obtain versions
of the  Poincar\'e-Lelong Theorem and the Stokes Theorem in this setting.     Their work is guided throughout by  an analogy with complex analytic geometry.  
The aim of the present work is to convert the analogy into a direct connection, showing  how the non-archimedean 
theory appears as  an asymptotic limit 
of one-parameter families of complex (archimedean) forms and integrals.

One way to view a family of complex varieties as degenerating to a non-archimedean space is to consider
the hybrid spaces first introduced by V. Berkovich
\cite{berk09}
to provide a non-archimedean interpretation of the weight zero part of the mixed Hodge structure on the cohomology of a proper complex variety.
For some other recent applications of hybrid spaces, see \cite{bj}
\cite{favre}
\cite{umm}.

The approach we follow in this paper is somewhat different. We
work over an  algebraically closed  field $C$ containing $\C$,  which is a degree 2 extension of a real closed field
$R$ containing $\R$ and is endowed at the same time with an archimedean non-standard
norm $\abs \cdot: C \to R_+$ and with a non-archimedean norm
$\val \cdot: C \to \R_+$
that essentially encapsulates the ``order of magnitude"
of $\abs \cdot$ with respect to
a given infinitesimal element which should be thought
of as a ``complex parameter tending to zero".
This presents the advantage of working on spaces that
have at the same time archimedean and non-archimedean features and allows to be able
to compare directly
archimedean constructions and their non-archimedean counterparts.
The  fields $R$ and  $C$ are constructed using ultrapowers. 
 The field $R$   was   introduced by A. Robinson in \cite{robinson1}, 
with the explicit hope that it will be useful for asymptotic analysis; 
 see also \cite{robinson2}. 
It was brought to good use in \cite{kramer-tent} following  the fundamental work of 
van den Dries and Wilkie \cite{vddw}, who have reformulated Gromov's theory of
asymptotic cones of metric spaces \cite{gromov} using 
ultrapowers.

A long-term motivation for our work is the famous conjecture by Kontsevich and Soibelman
\cite{koso} \cite{ks} relating large scale complex geometry and 
non-archimedean geometry.
Roughly speaking the conjecture describes the Gromov-Hausdorff limit of a family of complex Calabi-Yau varieties with maximal degeneration
in terms of  
non-archimedean geometry. We refer to 
\cite{gross1}\cite{gross2}\cite{gross3}\cite{oda1}\cite{oda2}\cite{sustretov} for some recent results in that direction.
Note that our results involve a renormalization in powers of $\log \vert t \vert$ which corresponds to what appears naturally when considering volume forms on Calabi-Yau varieties with maximal degeneration. From a model theoretic perspective, this is related to considering measures on certain definable sets over the value group, in  contrast to \cite{otero-b},
where measures are reduced to the residue field.

\subsection{}
Before going further, it may be useful to provide the flavor of our main results on a very elementary example.
Let $\varphi: \R \to \R$ be a smooth function with compact support.
Consider the complex $(1,1)$-form
\[\omega_t = - \frac 1{\log \abs{t}} \, 
\varphi \left(- \frac{\log \abs{z (z-t)}}{\log \abs{t}}\right)
\dll \abs{z}\wedge \dab{z}\]
on
$\P_1$, depending on the complex parameter $t$.
Fix a real number $K >1$.
One may write
\[\int_{\P_1(\C)} \omega_t
=
I_1 + I_2 +I_3\]
with 
\[I_1 = \int_{\abs{z} \leq \abs{t}/K} \omega_t, \, 
I_2 = \int_{\abs{t}/K \leq \abs{z} \leq K \abs{t}} \omega_t,
\;  \text{and} \;  I_3 = \int_{\abs{z} \geq K \abs{t}} \omega_t.\]
Using direct explicit computations, one may check
that
\[\lim_{t \to 0} I_1 =  \int_{x \leq -1} \varphi (x-1) \d x,
\, \lim_{t \to 0} I_2 = 0,
\;  \text{and} \;  \lim_{t \to 0} I_3 =  \int_{x \geq -1} \varphi (2x) \d x,\]
from which one deduces the equality
\[\lim_{t \to 0} \int_{\P_1(\C)} \omega_t
=
  \int_{x \leq -1} \varphi (x-1) \d x + 
 \int_{x \geq -1} \varphi (2x) \d x.\]
Quite remarkably, the right hand side of that equality admits a non-archimedean interpretation.
Indeed, consider the field
of Laurent series
 $\C\llp t \rrp$, fix $\tau\in (0,1)$, and endow $\C\llp t \rrp$ with the $t$-adic norm $\val{\:}$ normalized by  $\val{t} = \tau$.
On
 the
Berkovich analytification $\P_1\an$ of $\P_1$ over $\C\llp t \rrp$ one can consider the $(1,1)$-form
\[\omega_\flat = - \frac 1{\log \val{t}} \, 
\varphi \left(- \frac{\log \val{z (z-t)}}{\log \val{t}}\right)
\di \log\val{z} \wedge \dc \log \val{z}\]
in the sense of 
Chambert-Loir and Ducros \cite{chambertloir-d2012}.
Furthermore, the integral in the sense of 
Chambert-Loir and Ducros of the form $\omega_\flat$ on $\P_1\an$
is given by
\[\int_{\P_1\an} \omega_{\flat}
=
\int_{x \leq -1} \varphi (x-1) \d x + 
\int_{x \geq -1} \varphi (2x) \d x,\]
since the support of $\omega_\flat$ is contained in the standard skeleton
$(0, \infty)$ of $\mathbf{G}_m\an$, and the  function $z$ is of degree $1$ at each point of this skeleton.
Therefore we finally deduce the equality 
\[\lim_{t \to 0} \int_{\P_1(\C)} \omega_t
= 
\int_{\P_1\an} \omega_{\flat},\]
a very special case of our Corollary \ref{epilogue}.
We can already see here an instance of a general feature which will be exploited in our proof of the general case:
asymptotically the complex integrals we consider concentrate on the support of the correponding non-archimedean forms.
This support is piecewise polyhedral and only the faces of maximal dimension provide a non-zero contribution to the limit.
In general, Chambert-Loir and Ducros integrals involve also degrees over these faces, see  \ref{9.1.11} for an explanation how these relate
to the number of sheets of a complex \'etale morphism.

\subsection{}  
Let us now sketch the construction of the non-standard  ``asymptotic'' field $C$. We fix a non-principal ultrafilter $\mathcal{U}$ on $\C$
containing all  the neighbourhoods of the origin
(otherwise said, $\mathcal U$ converges to $0$)
and consider the ultrapowers
$
^*\C = \prod_{t \in \C^{\times}} \C / \mathcal{U}
$
and
$
^*\R = \prod_{t \in \C^{\times}} \R / \mathcal{U}.
$
We say an element $(a_t)$ in
$^*\C$, resp. $^*\R$, is $t$-bounded if
for some positive integer $N$, $\vert a_t \vert \leq \vert t \vert^{-N}$
along $\mathcal{U}$
(that is, the set of indices $t$ for which this inequality holds belongs to $\mathcal U$). Similarly, it is said to
be $t$-negligible
if for every positive integer $N$, 
$\vert a_t \vert \leq \vert t \vert^{N}$
along $\mathcal{U}$. 
The set of $t$-bounded elements in 
$^*\C$, resp. $^*\R$,
is a local ring which we denote by $A$, resp. $A_{\mathrm{r}}$,
with maximal ideal
the subset of $t$-negligible elements which we denote by 
$\mathfrak{M}$, resp. $\mathfrak{M}_{\mathrm{r}}$.
We now set 
$C := A / \mathfrak{M}$ and
$R := A_{\mathrm{r}} / \mathfrak{M}_{\mathrm{r}}$.
The field $R$ is a real closed field and $C \simeq R (i)$ is algebraically closed.
The norm
$\vert \cdot \vert \colon ^*\C\to {^*\R_{\geq0}}$
induces an $R$-valued norm
$\vert \cdot \vert : C \to R_{\geq0}$.

\subsection{}\label{ss-smooth-bigR}
Any usual smooth function $\phi\colon U\to \R$ defined on some semi-algebraic open subset $U$ of $\R^n$ induces formally a map $U(^*\R)\to {^*\R}$ which is still denoted by $\phi$.
Allowing ourselves to compose these functions (which arise from \emph{standard} smooth functions) with polynomial maps (which might have non-standard coefficients), we define for every smooth, separated $^*\R$-scheme $X$ of
finite type a sheaf of so-called smooth functions for the
(Grothendieck)
semi-algebraic topology
on $X(^*\R)$, which we denote by  $\mathscr C^\infty_X$.
The natural inclusion map from $X(^*\R)$ into the (underlying set of) the scheme $X$ underlies a morphism of locally ringed
sites $\psi \colon (X(^*\R),\mathscr C^\infty_X)\to (X,\mathscr O_X)$, and we can define the sheaf of smooth $p$-forms on $X(^*\R)$ by $\mathscr A^p_X:=\psi^*\Omega^p_{X/^*\R}$.
One has for every $p$
a natural differential $\d \colon \mathscr A^p_X\to \mathscr A^{p+1}_X$. 
We now assume $X$
 is of pure dimension $n$, and that $X(^*\R)$ is oriented
 (the notion of an orientation of a variety makes sense over an arbitrary real closed field, see \ref{def-semialg-topology}). 
Let $\omega$
be a smooth $n$-form on some semi-algebraic open subset $U$ of $X(^*\R)$, 
and let $E$ be a semi-algebraic subset of $U$ whose closure in $U$ is definably compact. 
Choosing a description of $(X,U,\omega,E)$ through a ``limited family" $(X_t,U_t,\omega_t,E_t)_t$,
it is possible to define 
the 
integral $\int_E\omega$ as the class of 
the sequence $(\int_{E_t}\omega_t)_t$ in $^*\R$.

\subsection{}\label{ss-smooth-r}
We now move from $^*\R$ to $R$, seeking to show that smooth functions, smooth forms and their integrals
remains well-defined on $R$.

Let $\phi\colon U\to \R$ be a usual smooth
function
defined on some semi-algebraic open subset $U$ of $\R^n$. 
Under some boundedness assumptions on $\phi$ (which are 
for instance automatically fulfilled if $\phi$ is compactly supported, or more generally if all its derivatives are polynomially bounded), the induced function $\phi \colon U(^*\R)\to {^*\R}$
in turn induces a map $U(R)\to R$, which we again denote by $\phi$.

For instance, the map $\log \abs \cdot$ from 
$\C^\times\simeq \R^2\setminus\{(0,0)\}$ 
is smooth and satisfies the boundedness conditions alluded
to above; it thus induces a map
$\log \abs \cdot \colon C^\times \to R$, which
enables us to endow 
the field $C$ with a real-valued non-archimedean norm
$\vert \cdot \vert_\flat : C \to \R_{\geq0}$
as follows.
For any $z$ belonging to $C^\times$, one checks that the norm of 
$\frac{\log \vert z \vert}{\log \vert t \vert}$ is bounded by some positive real number in $\R$.
One can thus consider its 
standard part $\alpha = \std \Bigl(\frac{\log \vert z \vert}{\log \vert t \vert}\Bigr) \in \R$.
Fixing $\tau \in (0, 1) \subset \R$,
one sets 
$\vert z \vert_\flat := \tau^{\alpha}$, so that 
$\vert z \vert_\flat = \vert t \vert_\flat^{\alpha}$.
With this non-archimedean norm the  field $C$ is complete (even spherically complete,  cf. \cite{luxemburg}).

We repeat the procedure used in \ref{ss-smooth-bigR}: allowing ourselves to compose the functions defined at the beginning
of \ref{ss-smooth-r} (which arise from \emph{standard} smooth functions) with polynomial maps (which might have non-standard coefficients), we define for every smooth, separated $R$-scheme $X$ of
finite type a sheaf of so-called smooth functions for the
(Grothendieck)
semi-algebraic topology
on $X(R)$, which we denote by  $\mathscr C^\infty_X$.
There is a natural morphism
of locally ringed
sites $\psi \colon (X(R),\mathscr C^\infty_X)\to (X,\mathscr O_X)$. One then sets $\mathscr A^p_X:=\psi^*\Omega^p_{X/R}$
and one has for every $p$
a natural differential $\d \colon \mathscr A^p_X\to \mathscr A^{p+1}_X$. 

Assume now $X$ is of pure dimension $n$ and oriented.
A substantial part of Section \ref{section3} is devoted to the construction of an $R$-valued 
integration theory on  $X(R)$.
\begin{prop}
Integration theory on $X (A_{\mathrm{r}})$ descends to 
$X (R)$.
\end{prop}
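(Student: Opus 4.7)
The plan is to establish two complementary descent statements: (a) if the input $(X,U,\omega,E)$ already descends through the quotient $A_{\mathrm{r}}\to R$, then for any lift $(\widetilde X,\widetilde U,\widetilde\omega,\widetilde E)$ over $A_{\mathrm{r}}$ and any description $(X_t,U_t,\omega_t,E_t)_t$ of this lift, the sequence $(\int_{E_t}\omega_t)_t$ is $t$-bounded, so the integral, a priori in $^*\R$, already lies in $A_{\mathrm{r}}$; and (b) if two such lifts represent the same $R$-data, then their integrals differ by an element of $\mathfrak{M}_{\mathrm{r}}$. Together these allow us to unambiguously define $\int_E\omega\in R$ as the reduction modulo $\mathfrak{M}_{\mathrm{r}}$ of the $A_{\mathrm{r}}$-valued integral of any lift.

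For the boundedness statement~(a), I would first use a finite semi-algebraic partition of $E$ compatible with the chart structure of $X$ to reduce to the case where $\widetilde\omega$ lives on a single chart and its coefficients are, in the sense of \ref{ss-smooth-r}, standard smooth functions $\phi_\alpha$ with polynomially bounded derivatives, composed with polynomial maps having $A_{\mathrm{r}}$-coefficients. The definable compactness of the closure of $\widetilde E$ translates into the existence of a $t$-bounded radius $r_t$ such that $E_t$ lies inside the corresponding ball; polynomial boundedness of the $\phi_\alpha$ then guarantees $\|\omega_t\|_{L^\infty(E_t)}\leq \abs t^{-N}$ along $\mathcal U$ for some $N$, and the Lebesgue measure of $E_t$ is $t$-bounded for the same reason, so the integral itself is $t$-bounded. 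For the independence statement~(b), I would telescope the difference between two lifts into (i) a perturbation of $\omega_t$ by a form whose coefficients lie in $\mathfrak{M}_{\mathrm{r}}$ at each index $t\in\mathcal U$, with the domain $E_t$ fixed, and (ii) a $t$-negligible perturbation of the semi-algebraic inequalities defining $E_t$, with $\omega_t$ fixed. Part~(i) reduces to the boundedness estimate of Step~(a) applied to the perturbed form: the mean-value theorem, together with polynomial boundedness of the first derivatives of the $\phi_\alpha$, shows that the perturbed form itself has $t$-negligible sup norm, hence $t$-negligible integral over a $t$-bounded domain.

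The main obstacle lies in part~(ii) of the independence: one must prove that enlarging or shrinking $E_t$ by a $t$-negligible semi-algebraic boundary layer produces a set of $t$-negligible measure. This is not automatic because the boundary of a general semi-algebraic set may be geometrically complicated. The approach I would take is to exploit cell decomposition in the o-minimal structure of semi-algebraic sets over $^*\R$, with complexity depending only on the polynomial degrees of the defining inequalities (hence uniform in $t\in\mathcal U$), to produce for each cell of the boundary a tubular neighbourhood whose measure is controlled by a $t$-bounded constant times the thickness of the layer; taking the thickness in $\mathfrak{M}_{\mathrm{r}}$ then delivers a total measure in $\mathfrak{M}_{\mathrm{r}}$. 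Packaging this uniform $O(\epsilon)$ estimate for semi-algebraic sets with non-standard coefficients, and ensuring it survives passage to the ultraproduct, is the technical heart of the proof.
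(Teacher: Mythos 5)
Your overall scheme — lift to $A_{\mathrm r}$, integrate in $^*\R$, reduce modulo $\mathfrak m_{\mathrm r}$, and prove independence of the lift — matches the paper, and your boundedness argument~(a) is essentially correct. But your treatment of independence~(b) has two genuine gaps.

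First, you silently assume the two lifts can be connected by a pair of perturbations (one of coefficients, one of the defining inequalities) on a \emph{single} ambient $A_{\mathrm r}$-scheme. In general the two lifts live on two \emph{non-isomorphic} smooth affine $A_{\mathrm r}$-schemes $\mathscr X$ and $\mathscr Y$ that only become isomorphic after reduction to $R$, and there is no canonical way to line them up. The paper's Proposition~\ref{prop-key-integration} resolves this by pushing both integrals down along étale maps $f\colon\mathscr X\to\A^n_{A_{\mathrm r}}$ and $g\colon\mathscr Y\to\A^n_{A_{\mathrm r}}$ lifting the same étale $h\colon Z\to\A^n_R$, using Hensel's lemma to lift $R$-points uniquely to $A_{\mathrm r}$-points, and then comparing the sheet counts of $f|_E$ and $g|_F$ over almost all of the common base; without this device the "telescope" is not even well posed.

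Second, for your step~(ii) the tubular-neighbourhood strategy would fail. The symmetric difference $E\bigtriangleup E'$ of two lifts of the same $K\subset X(R)$ is a $t$-bounded definable set whose \emph{reduction} has dimension $\le n-1$, but it need not sit inside any small displacement of a fixed boundary; it can have completely different geometry (e.g.\ a definable graph), so there is no uniform $O(\epsilon)$ estimate to invoke and, even where the picture is a boundary layer, the constant in a tubular estimate depends on curvature data for which you have no $t$-bounded control. The paper avoids this entirely by first proving Proposition~\ref{prop-equiv-negli}, the equivalence (for a $t$-bounded definable $D\subset(^*\R)^n$) between $\mathrm{Vol}(D)$ being $t$-negligible and $\dim\overline D\le n-1$. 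That equivalence is established by induction on $n$, cell decomposition, Fubini, and the cube criterion, and it is a purely definable statement with no hidden geometric constants. It is then applied through the fibration by étale maps, converting the comparison problem into a statement about subsets of $\A^n$ where the reduction criterion can be used directly. You correctly identified the hard spot — the measure of the discrepancy between two liftings of the domain — but the cell/tubular argument you propose does not close it; the right tool is the characterization of $t$-negligibility via dimension of the reduction.
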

Namely, to a semi-algebraic subset $K$ of $X(R)$, with definably compact definable closure,
and a smooth $n$-form $\omega$ on a semi-algebraic
neighborhood of $K$ in $X(R)$, we assign
an integral $\int_K \omega$ which is an element of $R$.
This is achieved 
in \ref{subsec-integ} by reducing to the case when 
 $X$ is liftable.   
Independence from the lifting follows from the fact, proved in Proposition \ref{prop-key-integration}, 
that the integrals obtained from two different liftings coincide up to a 
$t$-negligible element. A preliminary key statement in that direction is provided by 
Proposition \ref{prop-equiv-negli}
which states that if $D$ is a semi-algebraic subset of $(^*\R)^n$ contained in $A_{\mathrm r}^n$,
the volume of $D$ is $t$-negligible if and only if  the image of $D$ in $R^n$ through the reduction map is of dimension $\leq n-1$.

Assume that $X$ is a  smooth $C$-scheme of finite type and of pure dimension $n$.
One defines similarly the integral $\int_K \omega$ of a  complex-valued $(n,n)$-form $\omega$  defined
in a semi-algebraic neighborhood of a semi-algebraic
subset $K$ of $X(C)$, assuming  that there exists a semi-algebraic subset $K'$ of $K$ with definably compact closure such that $\omega$ vanishes on $K\setminus K'$. 

\begin{enonce}[remark]{Remark}
Note that for an arbitrary real closed field $S$ one cannot hope for a reasonable integration theory with values in $S$.
Indeed, let  for instance $S$ be the algebraic closure of $\mathbf Q$ in $\R$. Then there is no such reasonable integration theory on $S$, otherwise $\pi =\int_{x^2+y^2\leq 1}\d x\wedge \d y$ would belong to $S$.
\end{enonce}

\subsection{}
Fix a smooth $C$-scheme $X$ of finite type and pure dimension $n$, and set $\lambda := - \log \vert t \vert$.
In this text we define two Dolbeault-like complexes  $\mathsf A^{p,q}$ and $\mathsf B^{p,q}$. Informally $\mathsf A^{p,q}$ and
$\mathsf B^{p,q}$ should be thought of as living on $X(C)$ and $X\an$ respectively. But since we want to be able to compare them in some sense, we need that they be defined on the same site; this is the reason why we have chosen to define them as complexes of sheaves on the 
\emph{Zariski} site of $X$.

\subsubsection{The non-standard archimedean complex}
Let us start
with $\mathsf A^{p,q}$. We will explain what would be the most natural definition, why it is not convenient for our purpose, and what the actual definition is.

\paragraph{}\label{apq-fake-definition}
Basically, we would like a section of $\mathsf A^{p,q}$ on a given Zariski-open subset $U$
of $X$ to be a differential form on $U(C)$ which is locally
for the semi-algebraic topology on $X(C)$ of the form 
\[\omega=\frac 1{\lambda^p}\sum_{I,J} \phi_{I,J}\left(\frac{\log \abs{f_1}}\lambda,\ldots, \frac{\log \abs{f_m}}\lambda\right)
\dll \abs{f_I}\wedge \dArg{f_J}\]
where  $I$, \resp $J$, runs through the set of subsets of $\{1,\ldots, m\}$
of cardinality $p$, \resp  $q$, where the $f_i$ are regular invertible functions, 
$\dll \abs{f_I}$ stands for
the wedge product $\dll \abs{f_{i_1}}\wedge\ldots\wedge\dll \abs{f_{i_p}}$ if $i_1<i_2<\ldots<i_p$ are the elements
of $I$, and  $\dArg{f_J}$ stands for the wedge product $\daa {f_{j_1}}\wedge\ldots\wedge\daa {f_{j_q}}$ if $j_1<j_2<\ldots<j_q$ are the elements
of $J$. 

\paragraph{}
But it would be difficult to use the definition suggested in \ref{apq-fake-definition}, 
because the general forms described therein do not have non-archimedean counterparts, since there is no natural way to turn the implicit
semi-algebraic covering of $U(C)$ in their definition into an open covering of $U\an$; hence we will take a slightly more restrictive definition, albeit flexible enough for our purpose.

We thus define a section of $\mathsf A^{p,q}$ on a Zariski-open subset $U$
of $X$ is a differential form on $U(C)$ 
that is locally \emph{for the Zariski-topology of $U$} of the form
\[\omega=\frac 1{\lambda^p}\sum_{I,J} \phi_{I,J}\left(\frac{\log \abs{f_1}}\lambda,\ldots, \frac{\log \abs{f_m}}\lambda\right)
\dll \abs{f_I}\wedge \dArg{f_J}\]
where $(f_1,\ldots, f_m)$ are regular  functions
(but they are not assumed to be invertible), where $I$, \resp $J$, is running through the set of subsets of
$\{1,\ldots, n\}$ of cardinality $p$, \resp $q$,
and where each function $\phi_{I,J}$ is defined on a suitable subset of $(\R\cup\{-\infty\})^m$ 
and satisfies some technical conditions which we explain now. Let $x=(x_1,\ldots, x_m)$
be a point of $(\R\cup\{-\infty\})^m$ and
let $K$ denote the set of indices $i$ such that
$x_i=-\infty$. Then: 
\begin{itemize}
\item [$\diamond$] around $x$
the function $\phi_{I,J}$ only depends on the $x_i$ for $i\notin K$, and is smooth as a function of the latter; 
\item[$\diamond$] the function $\phi_{I,J}$ even \textit{vanishes} around $x$ as soon as $K$ intersects $I\cup J$.
\end{itemize}
It is clear that sections of $\mathsf A^{p,q}$ admit 
a local description as in \ref{apq-fake-definition}.
Note that if $K=\emptyset$ the only requirement is for $\phi_{I,J}$ to 
 be smooth around $x$, and that our second condition ensures that $\dll \abs{f_i}$ or $\dab {f_i}$
 can actually appear only around points at which $f_i$ is invertible (which is necessary for integrating such a form 
when $p=q=n$).

There exist natural differentials $d: \mathsf A^{p,q} \to \mathsf A^{p+1,q}$ and
$\ds : \mathsf A^{p,q} \to \mathsf A^{p,q+1}$ mapping respectively
a 
form 
\[\frac 1{\lambda^p}\phi\left(\frac{\log \abs{f_1}}\lambda,\ldots, \frac{\log \abs{f_m}}\lambda\right)
\dll\abs{f_I}\wedge \dArg{f_J}\]
to 
\[\frac 1{\lambda^{p+1}}\sum_{1\leq i\leq m}\frac{\partial \phi}{\partial x_i}\left(\frac{\log \abs{f_1}}\lambda,\ldots,
\frac{\log \abs{f_m}}\lambda\right)\dll \abs{f_i}\wedge \dll \abs{f_I}\wedge \dArg{f_J}\]
and to 
\[\frac 1{\lambda^{p}}\sum_{1\leq i\leq m}\frac{\partial \phi}{\partial x_i}\left(\frac{\log \abs{f_1}}\lambda,\ldots,
\frac{\log \abs{f_m}}\lambda\right)\dab{f_i}\wedge \dll \abs{f_I}\wedge \dArg{f_J}.\]
Here the map $d$ is the usual differential, and $d^\sharp$ is 
designed to switch modulus and argument, see 
\ref{def-d-sharp}; it turns out to be analogous to the operator $\mathrm d^{\mathrm c}$ of complex analytic geometry.

\subsubsection{The non-archimedean complex}
We are now going to
describe $\mathsf B^{p,q}$. 
Set $\lambda_\flat := - \log \vert t \vert_\flat$.

\paragraph{}\label{bpq-fake-definition}
Basically, we would like a section of $\mathsf B^{p,q}$ on a given Zariski-open subset $U$
of $X$ to be a differential form on $U\an$
in the sense of \cite{chambertloir-d2012}
which is locally
on $U\an$ of the form 
\[\frac 1{\lambda_\flat^p}\sum_{I,J} \phi_{I,J}\left(\frac{\log \val{f_1}}{\lambda_\flat},\ldots, \frac{\log \val{f_m}}{\lambda_\flat}\right)
\di \log \val{f_I}\wedge \dc \log \val{f_J}\]
where  $I$, \resp $J$, runs through the set of subsets of $\{1,\ldots, m\}$
of cardinality $p$, \resp  $q$, where the $f_i$ are regular invertible functions and where $\di \log \val{f_I}$ standing for
the wedge product $\di \log \val{f_{i_1}}\wedge\ldots\wedge\di \log\abs{f_{i_p}}$ if $i_1<i_2<\ldots<i_p$ are the elements
of $I$, and similarly for $\dc \log$. 

\paragraph{}
But by analogy with $\mathsf A^{p,q}$, we shall rather define a section of $\mathsf B^{p,q}$ on a Zariski-open subset $U$
of $X$ as a differential form on $U\an$ 
that is locally \emph{for the Zariski-topology of $U$} of the form
\[\frac{1}{\lambda_\flat^p}\sum_{I,J} \phi_{I,J}\left(\log \frac{\log
\val
{f_1}}{\lambda_\flat},\ldots,
\frac{\log \val{f_m}}{\lambda_\flat}\right)
\di \log\val{f_I}\wedge \dc \log \val{f_J}\]
where $(f_1,\ldots, f_m)$ are regular functions, where $I$, \resp $J$, is running through the set of subsets of
$\{1,\ldots, n\}$ of cardinality $p$, \resp $q$
(with  $\di \log \val{f_I}$ standing for
the wedge product $\di \log \val{f_{i_1}}\wedge\ldots\wedge\di \log \val{f_{i_p}}$ if $i_1<i_2<\ldots<i_p$ are the elements
of $I$, and similarly for $\dc \log \val{ f_J}$), and where each $\phi_{I,J}$  satisfies the same conditions as those in the definition of $\mathsf A^{p,q}$.

It is clear that sections of $\mathsf B^{p,q}$
are  locally of the form described
in \ref{bpq-fake-definition}, and that $\mathsf B^{\bullet, \bullet}$ is stable under the two differential operators 
$\di $ and $\dc$.

\subsection{}Our main result, Theorem \ref{main-theorem},
states that the two sheaves of bi-graded differential $\R$-algebras $\mathsf A^{\bullet, \bullet}$  and $\mathsf B^{\bullet, \bullet}$ 
on the site $X_{\mathrm {Zar}}$,
consisting respectively of non-standard archimedean and non-archimedean forms, are compatible in the following sense: 
\begin{theo}
There exists  a unique morphism of sheaves of bi-graded differential $\R$-algebras $ \mathsf A^{\bullet, \bullet} \to \mathsf B^{\bullet, \bullet}$, 
 sending a non-standard archimedean form  $\omega$ to the non-archimedean form $\omega_\flat$,
such that 
if $\omega$ is of the form
\[\omega =  \frac 1{\lambda^{\abs{I}}}\phi\left(\frac{\log \abs{f_1}}\lambda,\ldots, \frac{\log \abs{f_m}}\lambda\right)
\dll\abs{f_I}\wedge \dArg{f_J},\]
with $f_1,\ldots, f_m$  regular functions on a Zariski-open subset $U$ of $X$, 
$I$ and $J$ subsets of $\{1,\ldots, m\}$, and $\phi$
a quasi-smooth function,
then
\[\omega_\flat = 
\frac 1{\lambda_\flat^{\abs{I}}}\phi\left(\frac{\log \val{f_1}}{\lambda_\flat},\ldots, \frac{\log \val {f_m}}
{\lambda_\flat}\right)
\di \log\val{f_I}\wedge \dc \log \val{f_J}.\]
\end{theo}

Furthermore, we also prove in Theorem \ref{main-theorem}
that the mapping $\omega \mapsto \omega_\flat$ is compatible with integration.
A special case of that compatibility can be stated as follows:

\begin{prop}
Assume  that $\omega$ is an $(n,n)$-form defined on some Zariski open subset
$U$ of $X$
and that its support is contained in a definably compact
semi-algebraic subset of $U(C)$, then the form
$\omega_\flat$ on $X\an$
is compactly supported, $\int_{U(C)}\abs{\omega}$
is bounded by some positive real number  in $\R$ and
\[
\std\left(\int_{U(C)}\omega\right)
=\int_{U\an}\omega_\flat,
\]
with $\std$ standing for
the standard part. 
\end{prop}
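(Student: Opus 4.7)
The plan is to reduce to a standard local model and then perform an explicit tropical change of variables. Using a Zariski partition of unity adapted to the definably compact support $K$ of $\omega$ (covering $K$ by finitely many Zariski opens on which $\omega$ admits its local model), we may assume $\omega$ is of the explicit shape
\[
\omega=\frac{1}{\lambda^n}\phi\!\left(\frac{\log\abs{f_1}}{\lambda},\ldots,\frac{\log\abs{f_m}}{\lambda}\right)\dll\abs{f_I}\wedge\daa{f_J}
\]
on a Zariski open $U\subset X$, with $\abs I=\abs J=n$. The vanishing condition on $\phi$ in the ``$-\infty$'' directions for indices in $I\cup J$ lets us restrict to the locus where the corresponding $f_i$ are invertible, and $\phi$ can be taken to be compactly supported in $\R^m$.

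The compactness of $\mathrm{supp}\,\omega_\flat$ in $X\an$ then follows from the continuity of the analytic tropicalization $\tau\an\colon U\an\to\R^m$, $x\mapsto (\log\val{f_i(x)}/\lambda_\flat)_i$, together with the compactness of $\mathrm{supp}\,\phi$ and properness of tropicalization on the invertibility locus of the relevant $f_i$'s (extension by zero to $X\an$ is harmless).

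For the integration statements, introduce the tropical coordinates $y_i=\log\abs{f_i}/\lambda$ and the argument variables $\arg f_j$ for $j\in J$. Since $\dll\abs{f_i}=\lambda\,\d y_i$, the $\lambda^{-n}$ prefactor is cancelled, and a Fubini computation rewrites $\int_{U(C)}\omega$ as the integral against $\phi$ of a real Jacobian pulled back from $\R^m$ times a normalized torus fiber term. As $\phi$ is compactly supported and the Jacobian is bounded on its support (the $\log\abs{f_i}/\lambda$ being uniformly bounded there), we deduce that $\int_{U(C)}\abs{\omega}$ is bounded by a positive real number, establishing assertion~(b).

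The equality $\std\!\left(\int_{U(C)}\omega\right)=\int_{U\an}\omega_\flat$ is the heart of the matter. On the non-archimedean side, the Chambert-Loir/Ducros integral decomposes as a finite sum, over the maximal faces of the tropicalization of $(f_1,\ldots,f_m)|_U$, of Lagerberg integrals multiplied by the degrees of the associated monomial maps. On the archimedean side, one expects (as in the worked example of the introduction, and as formalized in~\ref{9.1.11}) that in the limit the integrand concentrates on an arbitrarily small neighborhood of this tropical skeleton: outside such a neighborhood the image of the relevant locus under reduction has dimension $\leq n-1$, so Proposition~\ref{prop-equiv-negli} makes the complementary contribution $t$-negligible. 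Near each maximal face, the archimedean Jacobian of $\tau$ in tropical and argument coordinates converges, modulo $t$-negligible errors, to a constant equal to the degree of the corresponding monomial map, matching exactly the multiplicity appearing on the non-archimedean side; taking the standard part one therefore recovers the CL--D integral. The main obstacle is to organize this face-by-face comparison uniformly and, in particular, to identify the archimedean ``number of sheets'' of the complex étale monomial map with its non-archimedean degree counterpart, a matching which is precisely the purpose of~\ref{9.1.11}.
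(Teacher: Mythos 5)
Your sketch correctly identifies the global shape of the paper's argument — reduction to a local tropical model, compactness of $\omega_\flat$ from the support condition, a norm estimate giving boundedness of $\int_{U(C)}\abs{\omega}$, concentration of the archimedean integral near the tropical skeleton, and matching the archimedean number of sheets with the non-archimedean degree — but it is a roadmap rather than a proof, and two of the reduction steps need repair.

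First, there is no ``Zariski partition of unity'' available in $\mathsf A^{0,0}$: cutting off $\omega$ by smooth bump functions of the shape $\phi(\log\abs{f_i}/\lambda)$ subordinate to an arbitrary Zariski cover of the support is not something the formalism gives you for free, and you would need to check that the resulting pieces remain sections of $\mathsf A^{n,n}$. The paper avoids this entirely: it never makes $\omega$ globally tropical; instead it chooses a finite affine cover $(U_i)$, bounds the support inside a definably compact $E_A$ built from generators of $\mathscr O(U_i)$, and then uses additivity and inclusion--exclusion of integrals over the various pieces (Lemma \ref{lem-compact-open-cover} together with the piecewise argument of \ref{sss-arg-piece} and the final assembly in Section 8.3). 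Your appeal to a partition of unity is doing more work than is justified.

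Second, the ``Fubini plus real Jacobian'' framing of the boundedness estimate is misleading. The map $u_{I,J}\colon x\mapsto(\abs{f_{i}(x)}_{i\in I},\;f_j(x)/\abs{f_j(x)}_{j\in J})$ from the invertibility locus to $(R_{>0})^n\times(S^1_R)^n$ is only generically finite-to-one, not a coordinate system; the relevant inequality in \ref{proof-bounded} is an area estimate of the form $\int_K\abs{\omega}\leq \frac{Nd}{\lambda^n}\int\frac{\d\rho}{\rho}\wedge\frac{\d\varpi}{2\pi}$ where the constant $d$ bounding the fiber cardinality is obtained from o-minimality, not from a Jacobian computation. The cancellation of $\lambda^{-n}$ against the integral of $\frac{\d\rho}{\rho}$ over $[\abs{t}^A,\abs{t}^{-A}]^n$ is correct, but the Fubini picture hides precisely the multiplicity $d$ that carries the content.

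Finally — and this is the main gap — your last paragraph ends by naming the identification of the archimedean fiber count with the non-archimedean degree of the étale map $\mu$ as ``the main obstacle, which is precisely the purpose of \ref{9.1.11},'' without carrying it out. That step is the heart of the proof: one has to exhibit, via skeleta, cell decompositions, definability in {\sc doag} and {\sc rcf}, and approximation of the non-definable sets $W(C)$, $D'(C)$ by definable ones, that the number of $C$-points in the fibers of $\mu$ over a large definable region equals the CL--D degree $d$, modulo $t$-negligible contributions. The surrounding reduction steps (arguing cellwise, affine change of coordinates, passing to the case $\dim Q<n$ and the case $(\omega_\flat)|_W=0$) are all nontrivial and in the paper occupy most of Section 8.1. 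Since your proposal explicitly stops at this obstacle, it is not yet a proof of the proposition.
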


Compatibility with integration is used in an essential way in proving
that the mapping $\omega \mapsto \omega_\flat$ is well defined. Indeed it allows us to use  a result of Chambert-Loir and Ducros
(\cite{chambertloir-d2012}, Cor. 4.3.7)
stating that, in the
boundaryless case, non-zero forms define non-zero currents.
A key point in the  proof of compatibility with integration is to show that the non-archimedean degree involved in the construction of non-archimedean integrals  in \cite{chambertloir-d2012}
actually shows up in the asymptotics of the corresponding archimedean integrals, which is done in \ref{9.1.11}.

\medskip

This main result has very concrete consequences: see our Theorem \ref{epilogue},
in which we express limits in the usual sense  of complex integrals depending on a parameter in terms of non-archimedean integrals.

\subsection*{Acknowledgements}
We would like to thank Alex Wilkie for proposing the current version of the proof of  Lemma \ref {lem-negli-cut}, which is much simpler than the original proof.
We are also grateful to the referees for their thorough reading  of the manuscript and their many suggestions that helped us in  significantly  improving the exposition.

Both A.D. and F.L. were partially supported  by ANR-15-CE40-0008 (D\'efig\'eo)
and by the Institut Universitaire de France.

\section{The general framework}

\subsection{}We shall use in this paper  basic facts and terminology from  Model Theory, which can be found for instance in
the books 
\cite{marker} and \cite{tz}. We shall in particular make use of  the theory {\sc doag}  of non-trivial
divisible ordered abelian groups, 
the theory  {\sc rcf} of real closed fields, and  the theory {\sc acvf}  of algebraically closed non-trivially valued fields.
Both {\sc doag} and  {\sc rcf} are examples of o-minimal theories.

\subsection{}\label{ultrafilter}
We fix a non-principal ultrafilter $\mathscr U$ on the set $\C$ of complex numbers; we assume that it converges to $0$, which
means that every neighborhood of the origin belongs to $\mathscr U$ 
(for our purpose, it
would be sufficient to consider such an ultrafilter
$\mathscr U$ on a sequence approaching $0$).
Note that since $\mathscr U$ is not principal, $\{0\}\notin \mathscr U$; 
as a consequence, every \emph{punctured}
neighborhood of $0$ also belongs to $\mathscr U$. 
\FL{In particular, there exists a family $X_i$, $i \in \mathbf{N}$, of elements of $\mathscr U$ such that
$\bigcap_{i \in \bf{N}} X_i = \emptyset$, that is, the ultrafilter $\mathscr U$ is countably incomplete.}

\begin{enonce}[remark]{Convention}\label{convention-sequences}
Unless otherwise stated,  when we introduce a ``sequence" $(a_t)_t$ the parameter $t$ is always 
understood as running through some set belonging to $\mathscr U$
(\eg, a small  punctured disc centered at the
origin), which we shall usually not make explicit. We shall 
allow ourselves to shrink this set of parameters when necessary (without mentioning it), 
for instance if we work with finitely many sequences and need
a common set of parameters. 

If we work with some sequence $(M_t)_t$ of sets
and then consider a sequence $(a_t)_t$ with $a_t\in M_t$ for every $t$, 
it will be understood that $a_t$ is defined for $t$ lying in \emph{some} set belonging to $\mathscr U$ and on which $t\mapsto M_t$
does make sense; so we do \emph{not}
require that $a_t$ be defined for every $t$ such that $M_t$ is. 

We say that some specified property $\mathsf P$ is satisfied by $a_t$ \emph{along $\mathscr U$}
if the set of indices $t$ such that $a_t$ satisfies $\mathsf P$ belongs to $\mathscr U$; \eg, $\abs{a_t}<\abs{t}$
along $\mathscr U$ means that the set of indices $t$ such that $\abs{a_t}<\abs{t}$ belongs to $\mathscr U$. 
\end{enonce}

\subsection{Ultraproducts}
Let $(M_t)_t$ be a sequence of sets. The \emph{ultraproduct of the sets $M_t$ along $\mathscr U$}
is the quotient of the set of all sequences $(a_t)_t$ with $a_t\in M_t$ for all $t$
by the equivalence
relation for which $(a_t)\sim(b_t)$ if and only if $a_t=b_t$ along $\mathscr U$
(we remind the reader that according to Convention \ref{convention-sequences}, $a_t$ needs not to be defined
for all $t$ for
which $M_t$ exists, but only for a subset of such complex numbers $t$ that belongs to $\mathscr U$).
If all the sets $M_t$ are
groups (\resp rings, \resp \ldots) the ultraproduct of the sets $M_t$ along $\mathscr U$
inherits a natural structure of 
group (\resp ring, \resp \ldots), which enjoys all first-order properties that hold for $M_t$ along $\mathscr U$; 
\eg, if the group $M_t$ is abelian along $\mathscr U$, the ultraproduct of the groups $M_t$ along $\mathscr U$ is abelian.

\begin{rema}
One can describe in a perhaps unusual way the ultraproduct of the sets $M_t$
as $\mathrm{colim}_T M_T$ where $T$ runs through the set of elements of $\mathscr U$
included in the domain of $t\mapsto M_t$, where $M_T:=\prod_{t\in T} M_t$, and where the
transition maps are the obvious ones. 
\end{rema}

\subsection{The field $^*\C$}
We apply the above by taking $M_t$ equal to the field $\C$ (\resp $\R$)
for all $t$, and we denote by $^*\C$ (\resp $^*\R$) the corresponding ultraproduct. 
The field
$^*\R$ is a real closed extension of $\R$; the field $^*\C$ is equal to $^*\R(i)$ and is an algebraically closed extension of $\C$.
We still denote by $\abs \cdot$ the ``absolute value" on $^*\C$; this is the map from $^*\C$ to $^*\R_+$ that maps
$a+bi$ to $\sqrt{a^2+b^2}$. By (harmless) abuse, the image in $^*\C$
of the sequence $(t)_t$ will also be denoted by $t$ ; it should be thought of as
a non-standard
complex number with infinitely small (but non-zero!) absolute value. 

A sequence $(a_t)_t$ of complex numbers is called: 

\begin{itemize}[label=$\diamond$]
\item \emph{bounded} if there is some $N\in \N$ such that $\abs{a_t}\leq N$ along $\mathscr U$; 
\item \emph{$t$-bounded} if there is some $N\in \N$ such that $\abs{a_t}\leq \abs{t^{-N}}$ along $\mathscr U$;
\item \emph{negligible} if $\abs{a_t}\leq \frac 1 N$ along $\mathscr U$ for all $N\in \Z_{>0}$; 
\item \emph{$t$-negligible} if $\abs{a_t}\leq \abs{t^N}$ along $\mathscr U$ for all $N\in \N$. 
\end{itemize}

An element $a$ of $^*\C$
is called \emph{bounded}, \resp \emph{$t$-bounded}, \resp \emph{negligible}, \resp {$t$-negligible}
if it is the image of some bounded, \resp $t$-bounded, \resp negligible, \resp $t$-negligible
sequence. This  amounts to requiring that
$\abs{a} \leq N$
for some integer $N\geq 0$, \resp $\abs{a}\leq \abs{t^{-N}}$ for some integer $N\geq 0$, 
\resp $\abs{a}\leq \frac 1 N$ for all integer $N>0$, \resp $\abs a\leq \abs t^N$ for all integer $N\geq 0$.
(Be aware that the above
inequalities are understood in the huge real closed field $^*\R$.)

\subsection{The field
$C$}\label{description-c}\label{ss-field-c}
The set $A$ of $t$-bounded elements of $^*\C$ is a subring of $^*\C$
which contains
$t$. This is a local ring, 
whose maximal ideal $\mathfrak m$ is the set of $t$-negligible elements; the intersection $A_{\mathrm r}:=A\cap {^*\R}$ is also 
a local ring, whose maximal ideal is $\mathfrak m_{\mathrm r}:=\mathfrak m \cap {^*\R}$. We denote by $C$ (\resp $R$)
the residue field of $A$ (\resp $A_{\mathrm r}$), and we still denote by $t$ the image
of the element $t$ of $A$ in $C$. Note that $\mathfrak m\neq 0$: for instance, 
the sequence
$(\exp (-1/\abs t))_t$ is $t$-negligible and not equal to zero along
$\mathscr U$, so it defines a non-zero element of $\mathfrak m$. 
One can describe directly $C$ as the ring of $t$-bounded sequences
modulo that of $t$-negligible sequences. The field $R$ is a real-closed extension of $\R$, we have $C=R(i)$, and $C$ is an algebraically closed
extension of $\C$. We still denote by $\abs \cdot$ the ``absolute value" on $C$; this is the map from $C$ to $R_+$ that maps
$a+bi$ to $\sqrt{a^2+b^2}$. An element $z$ of $C$ is called \emph{bounded}, \resp \emph{negligible} if it is the image of a bounded, \resp negligible, 
element of $A$. This amounts to requiring that $z$ is the image of a bounded, \resp negligible, sequence or
that $\abs z \leq N$ for some $N\in \N$, \resp
$\abs z <\frac 1 N$ for all $N\in \Z_{>0}$. 

If $z=a+bi$ is any bounded element of $C$, the subset of $\R$ consisting of those real numbers
that are $\leq a$ is non-empty
and bounded above, hence has a least upper bound $\alpha\in \R$; we define $\beta$ analogously. By construction, $z-(\alpha+\beta i)$ is negligible, 
and $\alpha+\beta i$ is the only complex number having this property; 
it is called the \emph{standard part}
of $z$
and it will be denoted by
$\std(z)$. If $z\in R$ then $\std(z)\in \R$. 

Any $t$-bounded complex-valued function $f$ on an element of $\mathscr U$ (\eg, a punctured small disc centered at the origin)
gives rise to an element of $C$, which we shall denote by $f$ if no confusion arises, as we do for $t$. Let us give some examples:

\begin{itemize}[label=$\diamond$]
\item For every $\alpha\in \R$ the sequence $(\abs t^\alpha)_t$ is $t$-bounded
and is not $t$-negligible, so it gives rise 
to an element $\abs t ^\alpha$ of $C\gpm$ (which actually belongs to $R_+\gpm$).
Note that if $\alpha\neq 0$ then $(\abs{t}^\alpha-1)_t$ is not $t$-negligible; hence $\alpha \mapsto \abs t^\alpha$ is
an injective order-reversing group homomorphism from $\R$ into $R_+\gpm$. 

\item The field $\mathscr M$ of meromorphic functions around the origin has a natural embedding into $C$. 

\item If $a$ is any non-zero element of $C$ arising from a $t$-bounded and non-$t$-negligible sequence $(a_t)_t$ then the sequence
$(\log \abs{a_t})_t$ is $t$-bounded, so it gives rise to an element of $C$. The latter depends only on $a$, and not on the specific sequence $(a_t)$. To see it, we have to check that if
$(\epsilon_t)_t$  is a $t$-negligible sequence then $(\log \abs{a_t+\epsilon_t}-\log \abs{a_t})$ 
is $t$-negligible as
well. For that purpose, we first notice that if $z$ is a standard complex number with $\abs z$ small enough then 
$\log \abs{1+z}\leq 2\abs z$. 
Now our assumptions imply that 
the sequence $(\epsilon_ta_t\inv)_t$ is $t$-negligible
and a fortiori negligible, so that 
\[\log \abs{a_t+\epsilon_t}-\log \abs{a_t}=\log \abs{(1+\epsilon_t\abs{a_t}\inv)}\leq 2\abs{\epsilon_ta_t\inv}
\]
holds along $\mathscr U$; using once again the fact that 
$(\epsilon_ta_t\inv)_t$ is $t$-negligible we get the required result. 

The element of $C$ defined by the sequence $(\log \abs{a_t})_t$ 
depending only on $a$, we denote it by $\log \abs a$. 
The sequence $\left(\frac{\log \abs {a_t}}{\log \abs t}\right)_t$
is bounded, so $\frac{\log \abs a}{\log \abs t}$ is bounded.

\end{itemize}

Set $\Lambda=\{r\in R_+\gpm\;| \;\abs t^{1/N}\leq r\leq \abs t^{-1/N}\;\text{for all}\;N\in \Z_{>0}\}$; this
is a convex subgroup of $R_+\gpm$, and $R_+\gpm/\Lambda$
thus inherits an ordering such that the quotient map is order-preserving.
The composition 
\[\xymatrix{{C\gpm}\ar[r]^{\abs \cdot}&{R_+\gpm}\ar[r]&R_+\gpm/\Lambda}\]
is a valuation $\val \cdot$, and $\val {C\gpm}=R_+\gpm/\Lambda$.
The valuation ring $C^\circ$ of $\val \cdot$ is the set of the elements $z\in C$ such that $\abs z<\abs t^{-1/N}$ for all integer $N>0$, 
and the maximal ideal of $C^\circ$ 
is the set $C^{\circ \circ}$
of elements $z$ of $C$ such that $\abs z <\abs t^{1/N}$ for some integer $N>0$
(note that $C^\circ$ contains the ring of bounded elements of $C$).

%

Let $z\in C\gpm$ and set $\alpha=\std(\frac {\log  \abs z}{\log \abs t})$. It follows
immediately from the definitions that $\abs z=\abs t^\alpha$ modulo $\Lambda$, and that $\abs t^\alpha$
itself belongs to $\Lambda$ if and only if $\alpha=0$. Hence
$\alpha\mapsto \abs t^\alpha\mod \Lambda$ induces an order-reversing
isomorphism between the ordered groups $\R$ and  
$\val{C\gpm}$, which maps $1$ to $\val t=\abs t \mod \Lambda$. 

We fix once and for all 
an order-preserving isomorphism between $\val {C\gpm}$ and $\R_+\gpm$, which amounts to choosing
the image $\tau$ of $\val t$ in $(0,1)$. We will from now
on use this isomorphism to see $\val\cdot$
as a real valuation (with value group the whole of $\R_+\gpm$). If $z$ is any element of $C\gpm$
we have
\[\val z=\val t^{\std\left(\frac{\log \abs z}{\log \abs t}\right)}=\tau^{\std\left(\frac{\log \abs z}{\log \abs t}\right)}.\]

The residue field $\widetilde C:=C^\circ/C^{\circ \circ}$ is an algebraically closed extension of $\C$. Let us give
an example of an element of $\widetilde C$ that is transcendent over $\C$. For every complex number $\lambda$
and every integer $N>0$ 
the (complex) inequalities $1\leq \abs{\log \abs t-\lambda}\leq \abs t^{-1/N}$
hold along $\mathscr U$; as a consequence, $1\leq  \abs{\log \abs t-\lambda}\leq \abs t^{-1/N}$
in $R$ for all integer $N>0$, so $\val{\log \abs t-\lambda}=1$.
Hence $\val{\log \abs t}=1$
and
if we denote by $\widetilde{\log \abs t}$ the image of $\log \abs t$ in $\widetilde C$
then $\widetilde{\log \abs t}-\lambda\neq 0$ for all $\lambda\in \C$; as a consequence, 
$\widetilde{\log \abs t}$ is transcendent over $\C$. 

The non-archimedean field $C$ is complete, and even spherically complete (cf. \cite{luxemburg}).
Indeed, let $(B_n)_{n\in \N}$ be a
decreasing sequence of closed balls with positive radius in $C$; for every $n$, denote by $r_n$ the radius of $B_n$ and choose $b_n$ in $B_n$; we want to prove that $\bigcap B_n$ is non-empty. For every $n\geq 1$, choose
a pre-image $\mathsf b_n$ of $b_n$ in $A$, and a real number $s_n$
with $r_{n-1}>s_n>r_n$, and denote by $\mathsf B_n$ the set of those 
$x\in{^*\C}$ such that $\abs{x-\mathsf b_n}\leq \abs t^{\log s_n/\log \tau}$. 
For each $n\geq 1$, the ball $\mathsf B_n$ contains the pre-image of 
$B_n$ in $A$, and is contained in the pre-image of $B_{n-1}$. 
The fact that every $\mathsf B_n$ contains the pre-image of $B_n$ in $A$
implies that the intersection of finitely many of the sets $\mathsf B_n$ is non-empty; 
\FL{since, as noted in \ref{ultrafilter}, the ultrafilter $\mathscr {U}$ is countably incomplete,
the ultraproduct $^*\C$ is $\aleph_1$-saturated by \cite{keisler}, Cor. 2.2, thus 
the intersection of all the sets $\mathsf B_n$ is non-empty;
but this intersection is contained in the pre-image of the intersection of all the sets $B_n$, so the latter is non-empty.}


\section{Smooth functions, smooth forms and their integrals \\ over $^*\R$
and $^*\C$}\label{section3}

\subsection{Semi-algebraic topology}
Let $S$ be an arbitrary real-closed field (we will use what follows for $S={^*\R}$ and $S=R$). 
Let $X$ be an algebraic variety over the field $S$; \ie, $X$ is a separated $S$-scheme
of finite type. 
The set $X(S)$ is in a natural way a definable space
of {\sc rcf}. By quantifier elimination 
in {\sc rcf}, the definable subsets of $X(S)$
are precisely its \emph{semi-algebraic}
subsets; \ie, those subsets that can be defined locally for the Zariski-topology of $X$
by a boolean combinations of inequalities (strict or non-strict) between regular functions. 

\subsubsection{}
The order topology
on the field $S$ induces a topology of $X(S)$, which is 
most of the time poorly behaved: except if $S=\R$ it is neither locally compact
nor locally connected. 

Let $U$ be a semi-algebraic subset of $X(S)$. We shall say that $U$ is open, \resp closed, 
if it is open, \resp closed for this topology. This amounts to require that $U$ can be defined, locally
for the Zariski-topology of $X$, by a positive boolean combination of strict (\resp non-strict)
inequalities between regular functions (\cite{bochnak-c-r1985}, Th. 2.7.1). 
The topological closure of a semi-algebraic subset $U$ of $X(S)$ is
semi-algebraic (and so is its topological interior, by considering
complements). 
Indeed, this can be checked on an affine chart, hence we reduce
to the case where $X=\A^n_S$; now since the topology on $S^n$ has a basis consisting of products of
open intervals, $\overline U$ is definable, so it is semi-algebraic.

\subsubsection{}
Since the interval $[0,1]$ of $S$ is not compact except if $S=\R$, naive topological
compactness is not
a relevant
notion in our setting.
We use \emph{definable compactness} instead, which itself relies
on the notion of a \emph{definable type}; see for instance Section 2.3 and Chapter 4 of \cite{hrushovski-l2016} for more information on
these topics. Let us just recall here that a
subset $E$ of $X(S)$ is called
\emph{definably compact} if every definable type lying on $E$
converges to a unique point of $E$.  
Since $X$ is separated, any definably  
compact semi-algebraic subset of $X(S)$ is closed. 
If $E$ is a definably compact semi-algebraic subset of $X(S)$,
a semi-algebraic subset $F$ of $E$ is closed if and only if it is definably compact.

\subsubsection{}
Assume that $X$ is affine, and let $(f_1,\ldots,f_n)$ be a family
of regular functions on $X$ that generate the $S$-algebra $\mathscr O(X)$.
If $E$ is a semi-algebraic subset of $X(S)$, then $E$ is definably compact if and only if it is
closed and \emph{bounded}; \ie, there exists $r>0$ in $S$ and
such that $\abs {f_i(x)}\leq r$ for all $i$ and all $x\in E$. 

\begin{lemm}\label{lem-compact-open-cover}
Let $X$ be a separated $S$-scheme of finite type and let $E$
be a definably compact semi-algebraic
subset of $X(S)$. Let $(U_i)_{i\in I}$ be a finite
family of definable open subsets of $X(S)$ such that $E\subset \bigcup U_i$.
There exists a family $(E_i)$ with each $E_i$
a definably compact semi-algebraic
subset of $U_i$ and $E=\bigcup_i E_i$. 
\end{lemm}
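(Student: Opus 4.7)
The strategy is the classical shrinking-of-open-covers argument, adapted to the o-minimal setting over the real closed field $S$.

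\emph{Reduction to a shrinking property.} I claim it suffices to produce open semi-algebraic subsets $V_i$ of $X(S)$ with $E\subset\bigcup_i V_i$ and $E\cap\overline{V_i}\subset U_i$ for every $i$. Given this, I set $E_i:=E\cap\overline{V_i}$: each $E_i$ is semi-algebraic, contained in $U_i$, and closed in the definably compact $E$, so is itself definably compact by the characterization recalled just before the lemma; and $\bigcup_i E_i\supset E\cap\bigcup_i V_i=E$. I then produce this shrinking by induction on $|I|$, replacing one $U_k$ by a smaller $V_k$ at each step: if after $k-1$ steps the sets $V_1,\ldots,V_{k-1},U_k,\ldots,U_n$ still cover $E$, the set $G_k:=E\setminus(V_1\cup\cdots\cup V_{k-1}\cup U_{k+1}\cup\cdots\cup U_n)$ is closed in $E$ (hence definably compact) and contained in $U_k$, so the step reduces to the separation property below applied to $G_k\subset U_k$.

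\emph{Separation step.} The inductive step amounts to the following statement: for every definably compact semi-algebraic $G\subset X(S)$ and every open semi-algebraic $U\supset G$, there exists an open semi-algebraic $V$ with $G\subset V\subset\overline{V}\subset U$. I would prove this by first covering $X$ by finitely many affine opens (using that $X$ is of finite type over $S$) and treating each piece separately, so one may assume $X\subset\A^N_S$. In such an affine chart the order topology has a basis of open boxes, so for each $x\in G$ one can pick a closed box $B_x\subset U$ with $x$ in its interior $B_x^\circ$; by definable choice this selection can be made uniformly semi-algebraically in $x$. The resulting definable family $(B_x^\circ)_{x\in G}$ is an open cover of the definably compact $G$, and by the definable Heine--Borel property for uniformly definable open covers (a consequence of the characterization of definable compactness via convergence of definable types, cf.\ \cite{hrushovski-l2016}) finitely many $B_{x_j}^\circ$ already cover $G$; then $V:=\bigcup_j B_{x_j}^\circ$ satisfies $\overline{V}\subset\bigcup_j B_{x_j}\subset U$.

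\emph{Main obstacle.} The delicate point is the invocation of definable Heine--Borel in the separation step: it requires the cover $(B_x)_{x\in G}$ to be uniformly semi-algebraic, not merely a pointwise choice. This uniformity is secured via definable choice, both for the map $x\mapsto i(x)\in I$ with $x\in U_{i(x)}$ and for a canonical box $B_x\subset U_{i(x)}$ around $x$ of positive radius (for example the largest $[-r,r]^N$-translate around $x$ fitting in $U_{i(x)}$, of a suitable definable radius). Globalizing by gluing the finite collections obtained on each affine chart of $X$ then yields the required $V_k$, completing the induction.
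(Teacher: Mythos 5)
Your overall strategy---an inductive shrinking-of-opens argument reducing to a separation property for definably compact sets---parallels the paper's proof, and the shrinking induction itself is sound. The genuine gap is in the reduction to the affine case inside the separation step. You write that one may ``cover $X$ by finitely many affine opens \ldots{} and treat each piece separately, so one may assume $X\subset\A^N_S$.'' But splitting a definably compact $G$ into definably compact pieces each lying inside one of the affine charts is \emph{exactly} the statement of the lemma, applied to the open cover of $G$ by the sets $X_j(S)$. As written, the reduction is circular. The remainder of your affine argument is fine: definable choice does furnish a uniformly semi-algebraic family of closed boxes $B_x\subset U$ around the points of $G$, and definable compactness does yield the finite-subcover property for uniformly definable open covers, so $V=\bigcup_l B_{x_l}^{\circ}$ works.

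The paper sidesteps the issue by refining the \emph{original} cover $(U_i)$ before starting the induction: intersecting with a finite affine cover of $X$, one may assume each $U_i$ lies inside a fixed affine chart $X_i$, so that the separation is always carried out inside a single chart. There the open set $V$ is produced not via boxes and Heine--Borel but by building semi-algebraic continuous distance functions (to $E\cap F$, to the boundary of $U_i$ in $X_i(S)$, and to $(X\setminus X_i)(S)$) from a finite set of generators of $\mathscr O(X_i)$, and cutting out $V$ by a suitable positive boolean combination of non-strict inequalities among them. To repair your proof, carry out this same refinement of $(U_i)$ upfront; then each $G_k$ already sits in one affine chart $X_k$, and your box-plus-definable-Heine--Borel separation argument applies without circularity. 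Once that is done your route is a legitimate alternative to the paper's: it trades the explicit distance-function construction for an appeal to the general finite-subcover property of definably compact sets.
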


\begin{proof}
Up to refining the covering $(U_i)$
we can assume that $U_i$
is for every $i$ contained in $X_i(S)$
for some open affine subscheme $X_i$ of 
$X$. We argue by induction on $\abs I$. The statement is clear if $\abs I=0$. Assume $\abs I>0$ and
the statement is true in cardinality $<\abs I$. Choose an element $i$ in
$I$ and set $F=X(S)\setminus\bigcup_{j\in I, j\neq i}U_j$. 
By definition, $F$ is a  closed semi-algebraic subset of $X(S)$ contained in $U_i$; thus $E\cap F$ is a definably compact
semi-algebraic
subset of $U_i$.

Choose
a semi-algebraic open subset $V$ of $U_i$ that contains $E\cap F$ and whose
closure $\overline V$ is definably compact and still contained in $U_i$
(one can use a finite set of generators of the $S$-algebra $\mathscr O_X(X_i)$
to build
semi-algebraic continuous distance functions to $E\cap F$, to the boundary of $U_i$ in
$X_i(S)$ and to $(X\setminus X_i)(S)$, and then define $V$ by a suitable
positive boolean combination of non-strict inequalities involving these functions).

Set $G=X(S)\setminus V$. By definition, $G$
is a closed semi-algebraic
subset of $X(S)$ and $G\cap E$ is thus definably compact. 

We then have $E=(E\cap \overline V)\cup (G\cap E)$. Since $G\cap E$ avoids $F$, it is contained in $\bigcup_{j\neq i}U_j$. 
The conclusion follows by applying the induction hypothesis to the set $G\cap E$. 
\end{proof}

\subsection{}\label{def-semialg-topology}
Because of the bad properties of the order topology $X(S)$, 
we shall not use it except while speaking of
closed or open semi-algebraic subsets. 
Nevertheless,
we shall use a closely related set-theoretic
Grothendieck topology,
namely the
\emph{semi-algebraic}
topology. The underlying category is that of open
semi-algebraic subsets of $X(S)$ with inclusion maps; 
a family $(U_i)_{i\in I}$ is a cover of $U$ if there is a \emph{finite} subset $J$ of $I$ such that $U=\bigcup_{i\in J}U_i$; 
this amounts to requiring that $(U_i\to U)$ induces a usual (open)
cover at the level of type spaces. 

If $X$ is smooth, $X(S)$ comes equipped with a \emph{sheaf of orientations} (for the semi-algebraic topology), defined \emph{mutatis mutandis} as in the standard case. It is locally isomorphic to the constant sheaf associated with a two-element set; a global section of this sheaf is called an \emph{orientation}
on $X(S)$. 

%
%
%

\subsection{Smooth forms and integrals over the field $^*\R$}
If $U$ is an open semi-algebraic subset of $\R^n$
for some $n$, then
every smooth function
(\ie, $\mathscr C^\infty$ function)
$\phi \colon U\to \R$ gives rise to a function 
$U(^*\R)\to \;{^*\R}$, which sends the class of a sequence $(a_t)_t$ with $a_t\in U$ along $\mathscr U$ to
the class of $(\phi(a_t))_t$; it will
still be denoted by $\phi$ if no confusion arises. 

\subsubsection{Smooth functions and smooth forms on a variety}
Let $X$ be a smooth separated $^*\R$-scheme of
finite type. Let $\mathscr F$ be the assignment that
sends a semi-algebraic open subset $U$ of $X(^*\R)$ to the set of functions from $U$
to $^*\R$ of the form 
$\phi \circ g$, where:  

\begin{itemize}
\item[$\diamond$] $g$ is a regular map from a Zariski-open
subset of $X$ containing $U$ to $\A^m_{^*\!\R}$ for some $m$;   
\item[$\diamond$] $\phi$ is a smooth function from
$V$ to $\R$ where $V$
is a semi-algebraic open subset of $\R^m$ such that $g(U)\subset V(^*\R)$.
\end{itemize}
Then $\mathscr F$ is a presheaf; its associated sheaf (for the semi-algebraic topology)
is denoted by $\mathscr C^\infty$ or $\mathscr C^\infty_X$ and called the sheaf of \emph{smooth functions}
on $X(^*\R)$. It makes $X(^*\R)$ a locally ringed site. 

The natural embedding of $X(^*\R)$ into
(the underlying set of) the scheme $X$ underlies a morphism of locally ringed
sites $\psi \colon (X(^*\R),\mathscr C^\infty_X)\to (X,\mathscr O_X)$; hence $\psi^*\Omega^p_{X/^*\R}$
is for every $p$ a well-defined $\mathscr C^\infty_X$-module on  $X(^*\R)$, which we denote by $\mathscr A^p$
or $\mathscr A^p_X$. 
The sheaf $\mathscr A^0_X$ is equal to $\mathscr C^\infty_X$, and the $\mathscr C^\infty_X$-module $\mathscr A^1_X$ 
is locally free (of rank $n$ if $X$ is of pure dimension $n$); for every $p$, we have $\mathscr A^p_X=\Lambda^p\mathscr A_X^1$. 
The sheaf $\mathscr A_X^p$ is called the sheaf of \emph{smooth $p$}-forms on $X(^*\R)$. One has for every $p$
a natural differential $\d \colon \mathscr A^p_X\to \mathscr A^{p+1}_X$. 
The sheaf  $^*\C\otimes_{^*\!\R}\mathscr A_X^p$ is called the sheaf of \emph{complex-valued}  
$p$-forms on $X(^*\R)$. Every complex-valued $p$-form $\omega$ defined on a semi-algebraic
open subset $U$ of $X(^*\R)$ can be evaluated at any point $u$ of $U$, giving rise to an element $\omega(u)$
of the $^*\C$-vector space ${^*\C}
\otimes_{\mathscr O_{X,u}}\Omega^p_{X,u}$.

\subsubsection{Integral of an $n$-form}\label{int-nform-bigR}
We still denote by $X$
a smooth separated $^*\R$-scheme of finite type; we assume that it is of pure dimension $n$
for some $n$, and that $X(^*\R)$ has been given an orientation.
Let $\omega$
be a complex-valued smooth $n$-form on some semi-algebraic open subset $U$ of $X(^*\R)$, 
and let $E$ be a semi-algebraic subset of $U$ whose closure in $U$ is definably compact. 

We now choose a description of $(X,U,\omega,E)$ through a ``limited family" $(X_t,U_t,\omega_t,E_t)_t$ where $X_t$
is for every $t$ a smooth separated $\R$-scheme of pure dimension $n$ endowed with an orientation of $X_t(\R)$, 
$U_t$ is an open subset of $X_t(\R)$, $\omega_t$
is a complex valued smooth form on $U_t$, and $E_t$ is a relatively compact semi-algebraic subset
of $U_t$. 
 The expression ``limited family"
means that the sequence $(X_t,U_t,\omega_t,E_t)$
can be defined using finitely many smooth functions (defined on real intervals), a given set $T\in \mathscr U$,
and finitely many polynomials with coefficients in $\R^T$. 

For every $t$ the smooth manifold $X_t(\R)$ is oriented, hence the
integral
$\int_{E_t}\omega_t$
is well-defined. The sequence $(\int_{E_t}\omega_t)_t$ defines an element of $^*\C$ that depends only
on $(X,U,\omega,E)$, and the chosen orientation on $X(^*\R)$. We denote it by $\int_E\omega$; if $\omega$
is real-valued, then $\int_E\omega$ is an element of  $^*\R$.

\subsubsection{The case of a non-standard complex variety}\label{int-nnform-bigR}
Let $X$ be now a smooth quasi-projective scheme over $^*\C$, and let $Y$ be
the Weil restriction 
$\mathrm R_{^*\R/^*\C}X$; this is a quasi-projective scheme over $^*\R$, equipped by definition
with a canonical bijection
$Y(^*\R)\simeq X(^*\C)$. This allows us to transfer to the set $X^*(\C)$ all notions introduced above. Moreover,
for every $p$, the sheaf $\mathscr A^p\otimes_{^*\!\R}{^*\C}$ of  
complex-valued smooth $p$-forms on $X^*(\C)$ is equipped
with a natural decomposition $\mathscr A^p\otimes_{^*\!\R}{^*\C}=\bigoplus_{i+j=p}\mathscr A^{i,j}$, where 
$\mathscr A^{i,j}$ is the sheaf of $(i,j)$-forms; \ie, of complex-valued $p$-forms generated over $\mathscr C^\infty$ by forms of
the type \[\d f_1\wedge \ldots \wedge \d f_i\wedge \d\overline {g_1}\wedge \ldots
\wedge \d \overline{g_j}\]
for some regular functions $f_1,\ldots, f_i,g_1,\ldots, g_j$. 

Assume that $X$ is of pure dimension $n$
for some $n$,
let $U$ be a semi-algebraic open subset of $X(^*\C)$, and let $\omega$ be a smooth $(n,n)$-form on $U$.
Let $E$ be a semi-algebraic
subset of $X(^*\C)$
whose closure is definably compact.
The
$(n,n)$-form $\omega$ can then be integrated on $E$,
using the canonical
orientation of $X(^*\C)$. Indeed, 
choose a description of
$(X,U,\omega,E)$ through a ``limited family" $(X_t,U_t,\omega_t,E_t)_t$ where $X_t$
is for every $t$ a smooth separated $\C$-scheme of pure dimension $n$, $U_t$ is an open
semi-algebraic subset of $X_t(\C)$, $\omega_t$
is a complex valued smooth $(n,n)$-form on $U_t$, and $E_t$ is a
relatively compact semi-algebraic
subset of $U_t$;  the integral $\int_E\omega$ is then given
by the sequence $\int_{E_t}\omega_t$. 

\subsubsection{}\label{sss-def-absomega}
We have considered so far only differential forms with smooth coefficients. But 
by replacing the class of usual smooth functions (on open subsets of $\R^m$) 
by a broader class $\mathscr C$, we can define 
in the same way differential forms over $^*\R$ with coefficients in $\mathscr C$, 
and integrate those of maximal rank on relatively compact definable
subsets (provided
$\mathscr C$ consists of locally integrable
functions). 

For instance, if we consider $\omega$ and $E$ as in \ref{int-nform-bigR}
or \ref{int-nnform-bigR}
we can define $\abs \omega$, which is a form with continuous piecewise smooth coefficients, 
and also define the integral $\int_E \abs \omega$, which is a non-negative element of $^*\R$.

\subsection{}
We are thus able to integrate smooth forms on the field $^*\R$, but what we are actually seeking for is a similar integration theory over $R$.
Our basic strategy is very simple: it consists in lifting a differential form on the field
$^*\R$, integrating it, and reducing the result modulo $t$-negligible elements. But of course, one has to check that it does not depend on our lifting. 
This requires a good understanting of the way our integrals interact with $t$-negligibility; this is the purpose of what follows.

\begin{enonce}[remark]{Notation}
Let $S$ be a real closed field and let $\mathfrak D$ be a non-empty, bounded above convex subset of $S$ with no
least upper bound in $S$. 
Then such a least upper bound nevertheless exists, but as a type on $S$; we denote it
by $d$. We shall allow ourselves to say that a given 
definable subset $I$ of $S$ contains $d$, resp. that a given definable formula $\Phi$ is satisfied by $d$, if $I$, resp. the set of $x\in S$ satisfying
$\Phi$, contains $(\lambda,+\infty)\cap \mathfrak D$ for some $\lambda \in \mathfrak  
D$. 
\end{enonce}

\begin{lemm}\label{lem-negli-cut}
Let $I$
be a definable interval of $S_{\geq 0}$ that contains $d$ and
let $f$ be a definable 
function from $I$ to $S$. 
Assume that there exists $a \in I$ with $a<d$ such that $f(x)>d$ for all
$x$ with $a <x<d$; then there exists $x>d$ in $I$ with $f(x)>d$.
\end{lemm}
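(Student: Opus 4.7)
My plan is to study the definable subset $A = \{y \in I : y < f(y)\}$ and to produce an element of $A$ strictly above the cut $d$; the inequality $f(y) > y > d$ will then deliver the conclusion.

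First I observe that $A$ contains every $y \in S$ with $a < y < d$: the hypothesis forces $f(y) > d > y$, hence $y \in A$. This set is cofinal in $\mathfrak D$ from below towards $d$, because $a < d$ means $a$ is not an upper bound of $\mathfrak D$, so the infinite set $\mathfrak D \cap (a, +\infty)$ is a cofinal subset of $\mathfrak D$ entirely contained in $A$.

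Next I apply o-minimality of {\sc rcf}: the definable subset $A$ of $S$ decomposes as a finite disjoint union of open intervals (with endpoints in $S \cup \{\pm \infty\}$) and singletons. Since finitely many singletons cannot cover the infinite cofinal-in-$\mathfrak D$ subset just exhibited, one of the open intervals, say $(\alpha, \beta)$, must itself contain elements of $\mathfrak D$ arbitrarily close to $d$. This forces $\beta \in S \cup \{+\infty\}$ to be an upper bound of $\mathfrak D$; since $d$ has no realization in $S$, in fact $\beta > d$ in the cut sense, and likewise $\alpha < d$.

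Finally I invoke the hypothesis that $\mathfrak D$ has no least upper bound in $S$, which means the set of upper bounds of $\mathfrak D$ in $S$ has no least element. I can therefore choose an upper bound $y$ of $\mathfrak D$ lying strictly below $\beta$ (when $\beta \in S$) or any such upper bound (when $\beta = +\infty$). Then $\alpha < d < y < \beta$, so $y \in (\alpha, \beta) \subseteq A$, and we conclude $y \in I$, $y > d$, and $f(y) > y > d$. The main delicate point is the cofinality step: ensuring that at least one of the finitely many open intervals in the decomposition of $A$ truly reaches up to the cut $d$, so that its right endpoint is forced to lie strictly above $d$ rather than being trapped inside $\mathfrak D$.
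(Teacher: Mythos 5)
Your proof is correct and follows essentially the same route as the paper's: you define the set $\{x : f(x) > x\}$ (the paper's $J$), note it contains $(a,d)\cap S$, invoke the o-minimal decomposition into finitely many points and open intervals, and observe that some interval must straddle the cut $d$, yielding a point $x>d$ with $f(x)>x>d$. The only cosmetic difference is that you spell out the cofinality argument and the use of ``no least upper bound'' to pick $y$ strictly between $d$ and $\beta$, steps the paper leaves implicit.
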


\begin{proof}
Let $J$ be the set of those $x\in I$ such that $f(x)>x$. This is a definable subset of $S$ which contains all elements $y\in S$
with $a<y<d$. By o-minimality,
$J$ is a finite union of intervals with bounds in $S\cup\{-\infty,+\infty\}$, thus
it contains some interval of the form $(a,b)$ for some element $b\in S$ with $b>d$. Then for all $x\in S$ such that $d<x<b$ we have $f(x)>x>d$.
\end{proof}

\subsection{}
Let $D$ be a definable subset of $(^*\R)^n$ with definably compact closure.
The integral $\int_D \d x_1\wedge\ldots \wedge \d x_n$
is called the \emph{volume}
of $D$ and is denoted by $\mathrm{Vol}(D)$. 

If $D$ is a cube, \ie, $D$ is of the form $\prod_{1\leq i\leq n}[a_i, b_i]$, 
then $\mathrm{Vol}(D)=\prod_i(b_i-a_i)$. 

We remind that $A_{\mathrm r}$ is the set of $t$-bounded elements of $^*\R$, 
and that $a\mapsto \overline a$ denote the reduction modulo the maximal 
ideal $\mathfrak m_{\mathrm r}$ of $A_{\mathrm r}$, \emph{cf.}
\ref{ss-field-c}.

\begin{prop}\label{prop-equiv-negli}
Let $D$ be a definable subset of $(^*\R)^n$ contained in $A_{\mathrm r}^n$. 
The following are equivalent:

\begin{enumerate}[i]
\item the volume of $D$ is $t$-negligible; 
\item for every $n$-form $\omega=\phi \d x_1\wedge \ldots\wedge \d x_n$ with
$\phi$ a smooth function defined in a neighborhood of the closure of $D$ and taking only $t$-bounded values on the latter, 
the integral $\int_D \omega$ is $t$-negligible; 
\item every cube contained in $D$ has $t$-negligible volume; 
\item the image $\overline D$ of $D$ in $R^n$ through the reduction map is of dimension $\leq n-1$. 
\end{enumerate}
\end{prop}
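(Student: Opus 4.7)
My plan is to prove the cycle (ii)$\Rightarrow$(i)$\Rightarrow$(iii)$\Rightarrow$(iv)$\Rightarrow$(ii). Three of these implications are nearly immediate: (ii)$\Rightarrow$(i) by taking $\phi\equiv 1$; (i)$\Rightarrow$(iii) by monotonicity of volume under inclusion; and (iv)$\Rightarrow$(ii) will follow from the (non-trivial) implication (iv)$\Rightarrow$(i) together with the easy estimate $\abs{\int_D\omega}\leq M\cdot\mathrm{Vol}(D)$, where $M\in A_{\mathrm r}$ bounds $\abs\phi$ on the closure of $D$, using that a product of a $t$-bounded and a $t$-negligible element is $t$-negligible. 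The substantive content is thus (iv)$\Rightarrow$(i) and (iii)$\Rightarrow$(iv), which I would treat by induction on $n$.

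For (iv)$\Rightarrow$(i), the base case $n=1$ uses o-minimality of $^*\R$: the set $D$ is a finite union of intervals $[a_i,b_i]\subset A_{\mathrm r}$, and $\dim\overline D\leq 0$ forces $\overline{a_i}=\overline{b_i}$, so each $b_i-a_i$ lies in $\mathfrak m_{\mathrm r}$ and the finite sum is still in $\mathfrak m_{\mathrm r}$. For $n>1$, let $\pi$ denote the projection onto the first $n-1$ coordinates and $v(x'):=\mathrm{Vol}(D_{x'})$, so Fubini gives $\mathrm{Vol}(D)=\int_{\pi(D)}v(x')\,\d x'$. If $\dim\overline{\pi(D)}\leq n-2$, the inductive hypothesis applied to $\pi(D)$ combined with the $t$-bounded bound on $v$ already suffices. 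Otherwise $\dim\overline{\pi(D)}=n-1$ and the fiber-dimension formula for $\overline D$ shows that the generic $\overline D$-fiber is finite; for each standard $M\in\N$ the definable set $E_M:=\{x'\in\pi(D):v(x')>\abs{t}^M\}$ has reduction contained in the thin locus of $\overline{\pi(D)}$ where $\overline D$-fibers are infinite (applying the $n=1$ case to turn $v(x')>\abs{t}^M$ into positive $R$-length of $\overline{D_{x'}}$), so $\dim\overline{E_M}\leq n-2$ and by the inductive hypothesis $\mathrm{Vol}(E_M)\in\mathfrak m_{\mathrm r}$. Splitting the integral on $E_M$ and its complement yields
\[
\mathrm{Vol}(D)\leq\abs{t}^M\cdot\mathrm{Vol}(\pi(D))+M_v\cdot\mathrm{Vol}(E_M)
\]
for every $M\in\N$, which forces $\mathrm{Vol}(D)\in\mathfrak m_{\mathrm r}$ since $M_v\cdot\mathrm{Vol}(E_M)$ is negligible and $M$ can be taken arbitrarily large against any fixed power of $\abs{t}$.

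For the contrapositive of (iii)$\Rightarrow$(iv), assume $\dim\overline D=n$, so $\overline D$ contains an open box $B=B'\times(\alpha,\beta)\subset R^n$. By cell decomposition some $n$-cell $C\subset D$ has $\dim\overline C=n$, and we may replace $D$ by $C=\{(x',x_n):x'\in D',\,f(x')<x_n<g(x')\}$ with $f,g\colon D'\to A_{\mathrm r}$ continuous definable. The inductive hypothesis applied to $D'$ yields a cube $K'\subset D'$ of non-negligible volume with $\overline{K'}\subset B'$; it then remains to produce $c<d$ in $A_{\mathrm r}$ with $d-c$ non-negligible and $f<c<d<g$ throughout $K'$, equivalently to bound $g-f$ below on $K'$ by a non-negligible element. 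This reduces to showing that $f$ and $g$ descend to continuous functions on $\overline{K'}$: granted descent, the inclusion $(\overline D)_{\overline{x'}}\supset(\alpha,\beta)$ yields $\overline g(\overline{x'})-\overline f(\overline{x'})\geq\beta-\alpha$ uniformly on $\overline{K'}$, which lifts to the required lower bound on $K'$. The main obstacle is precisely this descent across the negligible cut, and this is where I would invoke Lemma \ref{lem-negli-cut}, applied to the definable modulus-of-continuity $r\mapsto\sup\{\abs{h(y')-h(x_0')}:\abs{y'-x_0'}\leq r,\,y'\in D'\}$ for $h\in\{f,g\}$: it upgrades the tame semi-algebraic control of $h$ above the cut to the required uniform negligibility of $h(y')-h(x_0')$ for $y'-x_0'\in\mathfrak m_{\mathrm r}^{n-1}$, closing the induction.
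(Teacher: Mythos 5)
Your plan to prove the cycle (ii)$\Rightarrow$(i)$\Rightarrow$(iii)$\Rightarrow$(iv)$\Rightarrow$(ii), with (iv)$\Rightarrow$(i) as an auxiliary step, rearranges the paper's argument, which instead runs (i)$\Rightarrow$(ii)$\Rightarrow$(iii)$\Rightarrow$(i), then (iv)$\Rightarrow$(iii), then (i)$\Rightarrow$(iv). Your (iv)$\Rightarrow$(i) via Fubini and the fiber-dimension formula is a genuinely different route from the paper (which reaches this equivalence through the hard step (iii)$\Rightarrow$(i)); the Fubini induction reads as correct once one checks that $v(x')=\mathrm{Vol}(D_{x'})$ is definable and $t$-bounded, that $\overline{D_{x'}}\subset(\overline D)_{\overline{x'}}$, and that $|t|^M\cdot\mathrm{Vol}(\pi(D))+M_v\cdot\mathrm{Vol}(E_M)$ really is $\leq|t|^{M'}$ for a fixed $M'$ once $M$ is chosen large, using the $t$-boundedness of $\mathrm{Vol}(\pi(D))$ and $M_v$. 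So that half of the proposal is sound and arguably cleaner than the corresponding path in the paper.

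The step (iii)$\Rightarrow$(iv), however, has a real gap, and it is exactly the gap the paper's proof of (iii)$\Rightarrow$(i) is designed to navigate. You reduce to a cell $C=\{(x',x_n):x'\in D',\,f(x')<x_n<g(x')\}$ with $\dim\overline C=n$ and then assert that $f$ and $g$ ``descend'' on a cube $K'$, i.e.\ that $\overline{x'}=\overline{y'}$ forces $\overline{f(x')}=\overline{f(y')}$. This is false in general for continuous semi-algebraic $t$-bounded functions, even under the standing hypothesis $\dim\overline C=n$. A concrete obstruction: take $D'=[0,1]$, $f\equiv 0$, $g\equiv 1$ away from a window $[1/2-2\epsilon,1/2+2\epsilon]$ with $\epsilon=\exp(-1/|t|)$ a $t$-negligible element, and let $g$ dip linearly to some $t$-negligible value on $[1/2-\epsilon,1/2+\epsilon]$. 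Then $\overline{1/2}=\overline{1/2+2\epsilon}$ in $R$ but $\overline{g(1/2)}=0\neq 1=\overline{g(1/2+2\epsilon)}$, and yet $\overline D$ is still two-dimensional. The root of the problem is that $g$ can have $t$-unbounded slope on a $t$-negligible sub-region. Lemma~\ref{lem-negli-cut} cannot rescue this: it is an existence statement (a definable condition holding on $(a,d)$ with $a<d$ continues past the cut $d$), so applied to a monotone modulus $\omega_h(r)$ it only repeats the trivial monotonicity bound for some $r>d$ and gives no control of $\omega_h(r)$ for $r$ below the cut. The paper's proof of (iii)$\Rightarrow$(i) handles precisely this: after invoking Lemma~\ref{lem-negli-cut} on the definable function $\phi(a)=\sup\{\mathrm{Vol}_{n-1}\text{ of cubes in }\Delta\text{ over which }g-f>a\}$, it performs the iterated reduction (via the mean value theorem and the sets $\Delta_K=\{\abs{\partial_1 f}\geq K\}$) to shrink the cube to one on which $\abs{\partial_i f},\abs{\partial_i g}$ are bounded by a $t$-bounded constant $N$; only then does a lift-of-cube argument go through. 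Your sketch omits this derivative-taming step, and without it the ``uniform negligibility of $h(y')-h(x_0')$'' you want is simply not available. Two smaller unaddressed points in the same step: the claim that some $n$-cell $C$ of a cell decomposition satisfies $\dim\overline C=n$ requires knowing that cells of dimension $<n$ have reduction of dimension $<n$, which is not immediate from what you have established at that stage; and $\overline{K'}\subset B'$ is not what the inductive hypothesis delivers (it would follow from Proposition~\ref{prop-bar-almost}(2), but that proposition is proved later and itself uses the present proposition).
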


\begin{rema}
It is known that $\overline D$ is a
 closed definable subset of $R^n$  
(no matter whether $D$ is closed or not), see for instance \cite{broecker1991}. 
Thus its dimension is well-defined. But the reader could also rephrase (iv) by simply saying ``$\overline D$ contains
no $n$-cube with non-empty interior"; and this is indeed this rephrasing of (iv) that we shall actually use in the proof. 
\end{rema}

\begin{proof}
We are going to prove (i)$\Rightarrow$(ii)$\Rightarrow$(iii)$\Rightarrow$(i), and then (iv)$\Rightarrow$(iii) and (i)$\Rightarrow$(iv). 

Assume that (i) is true and let $\omega$ as in (ii). By
definable compactness of the closure of $D$
there exists a $t$-bounded positive element $M$
such that $\abs\phi\leq M$ on $D$. Then $\left|\int_D \omega\right| \leq M\mathrm{Vol}(D)$; the volume
of $D$ being $t$-negligible, $\int_D \omega$ is $t$-negligible as well. 

Now if (ii) is true then in particular $\mathrm{Vol}(D)$ is $t$-negligible (take $\phi=1$); this 
implies that the volume of every definable subset of $D$, including any cube contained in $D$, is $t$-negligible. 

Assume now that (iii) is true, and let us prove (i). We argue by induction on $n$. If $n=0$ there is nothing to prove. 
So assume that $n>0$ and the result holds in dimension $n-1$. Let $p\colon (^*\R)^n\to  (^*\R)^{n-1}$
be the projection on the first $n-1$ coordinates, and set $\Delta=p(D)$. If $(D_i)$ is any finite covering of $D$
by definable subsets it is sufficient to prove that (i) holds for every $D_i$ (note that $D_i$ obviously satisfies (iii)). 

Hence using cellular decomposition we can assume that we are in one of the following two cases: 
\begin{itemize}[label=$\diamond$]
\item there exists a continuous definable function $f$ on $\Delta$ such that $D$ is the graph of $f$; 
\item there exists two continuous definable functions $f$ and $g$ on $\Delta$ with $f<g$ such that 
$D=\{(x,y), f(x)<y<g(x)\}$.
\end{itemize}

In the first case $D$ is at most $(n-1)$-dimensional and its volume is zero. Let us assume
from now on
that we are in the second case.
Since $D\subset A_{\mathrm r}^n$, there is a positive $t$-bounded element $M$ such that $g-f<M$. 

Let $\phi$ be the function that sends an element $a$ of $[0,M]$ to the
least upper bound of the $(n-1)$-volumes of all cubes contained
in $\Delta$ over which $g-f>a$. 

\subsubsection{}
Let us prove by
contradiction that there exists some $t$-negligible element
$a$ such that $\phi(a)$ is $t$-negligible. 
We call ``$t$-significant" an element  which is not $t$-negligible, and we assume that $\phi(a)$ is $t$-significant 
for all $t$-negligible $a$; we are going to exhibit a cube inside $D$ with $t$-significant volume, which
will contradict our assumptions. 

By Lemma \ref{lem-negli-cut}
(which we apply by taking for
$d$ the least upper bound
of the set $\mathfrak D$
of $t$-negligible elements)
there exists some $t$-significant $a$
with $\phi(a)$ also $t$-significant. Therefore there exists some cube $K$ inside $\Delta$ with $t$-significant 
$(n-1)$-volume over which $g-f>a$. For each family 
$\epsilon=(\epsilon_1,\ldots, \epsilon_{2n-2})$ of elements of $\{-1,1\}$ 
let $K_\epsilon$ be the subset of $K$ on which $\partial_i g\in \epsilon_i (^*\R_{\geq 0})$
and $\partial_i f\in \epsilon_{n-1+i} (^*\R_{\geq 0})$ or all $1\leq i\leq n-1$. Then $K$ is the union of the sets $K_\epsilon$, 
so one of the sets $K_\epsilon$ has a $t$-significant volume, hence contains a cube $K'$ with $t$-significant volume
(by the induction hypothesis). Replacing $\Delta$ by $K'$, we assume from now on that $\Delta$ is a cube with $t$-significant
volume on which each partial derivative of $f$ and $g$ has constant sign and on which $g-f>a$. 

\FL{Write $\Delta = \prod[\alpha_i,\beta_i]$. 
Set $M = \sup_\Delta \vert f \vert$
and $K = 4M (\beta_1-\alpha_1)^{-1}$.
Since $M$ is $t$-bounded
and since $\beta_1-\alpha_1$ is $t$-significant
(because $\Delta$
has $t$-significant volume), $K$ is $t$-bounded.
Let $\Delta_K = \{x \in \Delta, \abs{\partial_1 f(x)} \geq K\}$.
We claim that
$\mathrm{Vol}(\Delta_K) \leq \frac{\mathrm{Vol}(\Delta)}2$.

Indeed, fix $z= (z_2, \cdots, z_{n-1})$
in $\prod_{i\geq 2}
[\alpha_i,\beta_i]$, and set 
$\Delta_{K, z} = \{y\in [\alpha_1,\beta_1],
(y,z)\in \Delta_K\}$. By o-minimality, $\Delta_{K,z}$
is a finite union of closed intervals; let $\lambda$ be the
one-dimensional volume (or, otherwise said, the total length)
of $\Delta_{K,z}$.
If $\gamma$ and $\delta$ are two elements of $[\alpha_1,\beta_1]$
such that $\gamma\leq \delta$
and $[\gamma,\delta]\subset \Delta_{K,z}$ then
by the mean value theorem one has 
$\abs{f(\delta,z))-f(\gamma,z)}\geq K(\delta-\gamma)$. 
By monotonicity of $f(\cdot, z)$ this implies that
$\abs{f(\beta_1,z)-f(\alpha_1,z)}\geq K\lambda$. 
Since $\abs{f(\beta_1,z)-f(\alpha_1,z)}\leq 2M$ by the definition
of $M$, we see that
$\lambda\leq 2M/K=(\beta_1-\alpha_1)/2.$
Thus, by Fubini, $\mathrm{Vol}(\Delta_K) \leq \frac{\mathrm{Vol} (\Delta)}2,$ as announced.

It follows that
the complement of 
$\Delta_K$ in $\Delta$ has $t$-significant volume.
By the
induction hypothesis, it contains a cube with $t$-significant volume.
Iterating this argument (which works for $g$
as well as for $f$, and for the $i$-th component as well as
for the first one), we can furthermore assume 
that $\Delta$ is a cube with $t$-significant volume on which each partial derivative of $f$
and $g$ has an absolute value bounded above by some positive $t$-bounded constant $N$}.

Let $x$ be the point $(\frac{\alpha_i+\beta_i}2)_i$ of $\Delta$. 
Set $y=\frac{g(x)+f(x)}2$; the point $(x,y)$ belongs to $D$. Set $r=(g(x)-f(x))/4$ ; since $g(x)-f(x)\geq a$, the number $r$
is $t$-significant. Let $N'$ be a $t$-bounded number such that $N'>\FL{\sqrt{n - 1}} N$ and $r/N'<\min_i (\beta_i-\alpha_i)/4$ -- such $N'$ exists since $\beta_i-\alpha_i$
is $t$-significant for every $i$. Let $\Gamma$ be the cube in $(^*\R)^n$ 
with center $(x,y)$ and polyradius $(r/N', \ldots, r/N',r)$. 
\FL{If $(\xi, \eta)$ belongs to $\Gamma$,
then $\xi\in \Delta$. By the mean value theorem $\abs{f(\xi) - f (x)} \leq \frac{\sqrt{n - 1}rN}{2N'} < \frac{r}{2}$ and similarly 
$\abs{g(\xi) - g(x)} < \frac{r}{2}$. Thus
$f(\xi)<\eta<g(\xi)$ and therefore $D$ contains the cube $\Gamma$ which has $t$-significant volume.}

\subsubsection{}
By the above, there exists some $t$-negligible element $a$ such that $\phi(a)$ is $t$-negligible. Let $\Delta'$ be the
subset of $\Delta$ consisting of points over which $g-f>a$. By assumption, every cube contained in $\Delta'$ has $t$-negligible volume; 
by our induction hypothesis, the volume of $\Delta'$ is $t$-negligible. Since $g-f$ is uniformly $t$-bounded, it follows from Fubini's theorem
that the volume of $p\inv(\Delta')$ is $t$-negligible. 
Let $\Delta''$ be the complement of $\Delta'$ in $\Delta$. The $(n-1)$-volume of $\Delta''$ is $t$-bounded, and $g-f\leq a$ on $\Delta''$. 
Applying again Fubini's theorem, we see that $p\inv(\Delta'')$ has $t$-negligible volume. Hence
$D=p\inv(\Delta')\cup p\inv(\Delta'')$ has $t$-negligible volume. This ends the proof of (i)$\iff$(ii)$\iff$(iii).

\subsubsection{Proof of  (iv)$\Rightarrow$(iii) and (i)$\Rightarrow$(iv).}
It is clear that (iv)$\Rightarrow$(iii), since the reduction of every cube in $A_{\mathrm r}^n$ with $t$-significant volume is
a cube with non-empty interior.
We are going to prove (i)$\Rightarrow$(iv)
by contraposition. So assume that $\overline D$ is $n$-dimensional.
Under this assumption, it contains a cube with non-empty interior; let us write it $\prod [\overline {a_i}, \overline {b_i}]$ where
$a_i$ and the $b_i$ are $t$-bounded and $b_i-a_i$ is $t$-significant for all $i$. Let $B$ be the definable set 
$\prod_i [a_i, b_i]\setminus D$. 

We claim 
that every cube contained in the definable
subset $B$ has $t$-negligible volume.
Indeed, let $\Delta=\prod[\alpha_i,\beta_i]$ be such a cube. 
If $x$ is a point of $A_{\mathrm r}^n$
with $\overline x\in \prod \;( 
\overline {\alpha_i}, \overline {\beta_i})$   
then $x\in \Delta$
(and hence $x\notin D$) so 
$\prod\; (\overline {\alpha_i}, \overline {\beta_i})$ does not intersect $\overline D$. 
\FL{On the other hand, since
$\prod\;  (\overline {\alpha_i}, \overline {\beta_i})$ is contained in 
$\prod\;  [\overline {a_i}, \overline {b_i}]$ (because $\Delta\subset \prod_i[a_i,b_i]$), and 
$\overline D$ contains $\prod_i[\overline{a_i},
\overline{b_i}]$,
the open cube $\prod\;  (\overline {\alpha_i}, \overline {\beta_i})$ is contained in  
$\overline D$. Thus $\prod\;  (\overline {\alpha_i}, \overline {\beta_i})$ is empty, and} there
is at least one index $i$ such that 
$\overline{\beta_i}-\overline{\alpha_i}=0$, which means that $\beta_i-\alpha_i$ is $t$-negligible; a fortiori, the volume of $\Delta$
is $t$-negligible. 

Now by what we have already proved, this implies that $\int_{\prod [a_i,b_i]\setminus D}\d x_1\wedge\ldots\wedge \d x_n$ is
$t$-negligible. As a consequence
\[\int_{\prod [a_i,b_i]\cap D}\d x_1\wedge\ldots\wedge \d x_n=\int_{\prod [a_i,b_i]}\d x_1\wedge\ldots\wedge \d x_n\]
modulo a $t$-negligible element; but $\int_{\prod [a_i,b_i]}\d x_1\wedge\ldots\wedge \d x_n=\prod(b_i-a_i)$, which is
$t$-significant. Thus $\int_{\prod [a_i,b_i]\cap D}\d x_1\wedge\ldots\wedge \d x_n$ is $t$-significant as well, and so is
$\int_D \d x_1\wedge\ldots\wedge \d x_n$. 
\end{proof}

\subsection{}\label{definition-almost-coincidence}
A
definable subset $D$ of $(^*\R)^n$ is called $t$-bounded if
it is contained in $A_{\mathrm r}^n$; it is called 
$t$-negligible if it is $t$-bounded and satisfies the equivalent properties of \Prop \ref{prop-equiv-negli}. We shall
say that two $t$-bounded definable subsets $D$ and $D'$ of $(^*\R)^n$ 
almost coincide (\resp are almost disjoint) if their symmetric difference (\resp their intersection) is $t$-negligible. 
If $D$ is a $t$-bounded definable subset of $(^*\R)^n$, a finite family $(D_i)$ of $t$-bounded definable subsets of $(^*\R)^n$
will be called an almost partition of $D$ if $\bigcup D_i$ is almost equal to $D$ and the subsets $D_i$ are pairwise almost disjoint.

A definable subset $D$ of $R^n$ is called 
negligible if it is of dimension $\leq n-1$. We shall
say that two definable subsets $D$ and $D'$ of $R^n$ 
almost coincide (\resp are almost disjoint) if their symmetric difference (\resp their intersection) is negligible. 
If $D$ is a definable subset of $R^n$, a finite family $(D_i)$ of definable subsets of $R^n$
will be called an almost partition of $D$ if $\bigcup D_i$ is almost equal to $D$ and the subsets $D_i$ are pairwise almost disjoint. 

\begin{lemm}\label{lem-almost-disjoint}
Let $D$ and $\Delta$ be two $t$-bounded definable subsets of $(^*\R)^n$. Then $D$ and $\Delta$ are almost disjoint if and only if
$\overline D$ and $\overline \Delta$ are almost disjoint. 
\end{lemm}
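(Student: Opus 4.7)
The plan is to deduce the lemma directly from Proposition \ref{prop-equiv-negli}, whose criterion (iv) translates $t$-negligibility of a $t$-bounded definable set into the statement that its image under reduction has dimension $\leq n-1$. Accordingly, the lemma reduces to the assertion that $D\cap \Delta$ is $t$-negligible if and only if $\overline D\cap \overline \Delta$ has dimension $\leq n-1$.

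The easy direction rests on the trivial inclusion $\overline{D\cap \Delta}\subseteq \overline D\cap \overline \Delta$ (if $x\in D\cap \Delta$ then $\overline x$ lies in both $\overline D$ and $\overline \Delta$). If $\overline D\cap \overline \Delta$ has dimension $\leq n-1$, then so does $\overline{D\cap \Delta}$, and Proposition \ref{prop-equiv-negli} gives that $D\cap \Delta$ is $t$-negligible.

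For the converse I would argue by contraposition: assume $\overline D\cap \overline \Delta$ is $n$-dimensional. Then it contains a cube $\prod_i [\overline{a_i},\overline{b_i}]$ with non-empty interior, where we lift the corners to elements $a_i,b_i\in A_{\mathrm r}$ with each $b_i-a_i$ $t$-significant. The key step is to run separately for $D$ and for $\Delta$ the argument at the end of the proof of Proposition \ref{prop-equiv-negli} (the subclaim establishing that whenever $\overline D$ contains $\prod_i [\overline{a_i},\overline{b_i}]$, the complement $B_D:=\prod_i [a_i,b_i]\setminus D$ has $t$-negligible volume, because any cube inside it must have a side of $t$-negligible length). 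This yields that both $B_D$ and $B_\Delta:=\prod_i [a_i,b_i]\setminus \Delta$ are $t$-negligible, hence so is their union $\prod_i [a_i,b_i]\setminus (D\cap \Delta)$. Additivity of the integral then forces $\mathrm{Vol}(D\cap \Delta\cap \prod_i [a_i,b_i])$ to equal $\prod_i (b_i-a_i)$ modulo a $t$-negligible element; since the latter is $t$-significant, $D\cap \Delta$ fails to be $t$-negligible.

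I do not expect any serious obstacle here: the whole argument is essentially an application of Proposition \ref{prop-equiv-negli} once in the straightforward direction and twice, packaged together, in the harder direction. The only point requiring a modicum of care is that the cube sitting inside $\overline D\cap \overline \Delta$ must be lifted to a cube of $t$-bounded corners in $(^*\R)^n$ simultaneously compatible with both $D$ and $\Delta$, which is automatic since the cube belongs to both $\overline D$ and $\overline \Delta$.
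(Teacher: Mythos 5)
Your proof is correct and rests on the same ingredients as the paper's: the inclusion $\overline{D\cap\Delta}\subset\overline D\cap\overline\Delta$ together with Proposition~\ref{prop-equiv-negli} for the easy direction, and, for the converse, lifting a non-degenerate cube from $\overline D\cap\overline\Delta$ and invoking Proposition~\ref{prop-equiv-negli} again. The only real difference is the packaging of the harder direction: you run the subclaim from the proof of (i)$\Rightarrow$(iv) of Proposition~\ref{prop-equiv-negli} symmetrically on both $D$ and $\Delta$, deduce that $P\setminus D$ and $P\setminus\Delta$ are each $t$-negligible, and read off that $P\cap D\cap\Delta$ has $t$-significant volume; the paper argues by contradiction, observes that at least one of $P\setminus D$, $P\setminus\Delta$ has $t$-significant volume, extracts a large cube inside it and derives a contradiction from the fact that the reduction of its center must lie outside $\overline D$. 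Both hinge on the same basic fact — a point at $t$-significant distance from $D$ does not reduce into $\overline D$ — so the two arguments are essentially the same, with yours being the marginally more symmetric and modular rendering.
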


\begin{proof}
If $\overline D$ and $\overline \Delta$ are almost disjoint
then $\overline{D\cap \Delta}\subset \overline D\cap \overline \Delta$
is negligible, so $D\cap \Delta$ is $t$-negligible by \Prop \ref{prop-equiv-negli}.
Conversely, assume that $D\cap \Delta$ is $t$-negligible and let us prove that $\overline D$ and $\overline \Delta$
are almost disjoint. 
We argue by contradiction, so we assume that there exist elements $a_1,\ldots,a_n, b_1, \ldots, b_n$ in $A_{\mathrm r}$
with $b_i-a_i>0$ and $t$-significant for all $i$ such that $\prod [\overline {a_i}, \overline{b_i}]\subset \overline D\cap \overline \Delta$. Set $P=\prod [a_i, b_i]\subset A_{\mathrm r}^n$. The volume of
the cube $P$ is $t$-significant and the volume of $P\cap D\cap \Delta$ is $t$-negligible, so the volume of $P\setminus (D\cap \Delta)=(P\setminus D)\cup (P\setminus \Delta)$ is $t$-significant. So at least one of the two definable sets $P\setminus D$
and $P\setminus \Delta$ has $t$-significant volume. Assume without loss of generality that
$P\setminus D$ has $t$-significant volume. By \Prop \ref{prop-equiv-negli}
there exists $c_1,\ldots,c_n, d_1, \ldots, d_n$ in $A_{\mathrm r}$
with $d_i-c_i>0$ and $t$-significant for all $i$ such that $\prod [c_i, d_i]\subset P\setminus D$.
Set $x=(\frac{c_1+d_1}2, \ldots, \frac{c_n+d_n}2)$. Then $x$ is a point of $P$ whose distance to $D$ is $t$-significant. 
As a consequence, $\overline x\notin \overline D$. But since $x\in P$, its reduction $\overline x$
belongs to $\prod [\overline {a_i}, \overline{b_i}]\subset \overline D\cap \overline \Delta$, contradiction. 
\end{proof}

\begin{prop}\label{prop-bar-almost}
Let
$D$ and $\Delta$ be two $t$-bounded definable 
subsets of $(^*\R)^n$.
\begin{enumerate}[1]
\item The set $D$ is almost equal to $\Delta$
if and only if $\overline D$ is almost equal to $\overline \Delta$. 
\item The set $\overline{D\cap \Delta}$ is almost equal to 
$\overline D\cap \overline \Delta$. 
\end{enumerate}
\end{prop}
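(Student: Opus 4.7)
Plan. Both parts reduce to short Boolean manipulations built on Lemma~\ref{lem-almost-disjoint} (which converts ``almost disjoint'' on $t$-bounded sets into ``almost disjoint'' on their reductions) and Proposition~\ref{prop-equiv-negli} (which identifies $t$-negligibility with the reduction being of dimension $\leq n-1$). Throughout I will exploit the elementary fact that the reduction map $a\mapsto \overline a$ commutes with finite unions, so that $\overline{E\cup F}=\overline E\cup \overline F$ for any $t$-bounded $E,F\subseteq (^*\R)^n$.

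For part (2), the inclusion $\overline{D\cap \Delta}\subseteq \overline D\cap \overline \Delta$ is immediate from the definition of reduction. For the reverse inclusion up to a negligible set, I decompose
\[\overline D=\overline{D\cap \Delta}\cup \overline{D\setminus \Delta},\qquad \overline \Delta=\overline{D\cap \Delta}\cup \overline{\Delta\setminus D},\]
and distribute the intersection to obtain
\[\overline D\cap \overline \Delta \;=\; \overline{D\cap \Delta}\;\cup\; \bigl(\overline{D\setminus \Delta}\cap \overline{\Delta\setminus D}\bigr).\]
Since $D\setminus \Delta$ and $\Delta\setminus D$ are literally disjoint, Lemma~\ref{lem-almost-disjoint} yields that $\overline{D\setminus \Delta}\cap \overline{\Delta\setminus D}$ has dimension $\leq n-1$, which is exactly what (2) asserts.

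For part (1), the forward direction is immediate: any $x\in \overline D\setminus \overline \Delta$ has a lift $d\in D$ that cannot lie in $\Delta$ (else $x=\overline d\in \overline \Delta$), hence $x\in \overline{D\setminus \Delta}$, giving the inclusion $\overline D\triangle \overline \Delta\subseteq \overline{D\triangle \Delta}$; combined with Proposition~\ref{prop-equiv-negli}, this supplies the ``only if''. For the converse, assuming $\overline D\triangle \overline \Delta$ has dimension $\leq n-1$, I apply Lemma~\ref{lem-almost-disjoint} to the (disjoint) pair $D\setminus \Delta$ and $\Delta$ to obtain that $\overline{D\setminus \Delta}\cap \overline \Delta$ has dimension $\leq n-1$. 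Writing
\[\overline{D\setminus \Delta}=(\overline{D\setminus \Delta}\cap \overline \Delta)\;\cup\;(\overline{D\setminus \Delta}\setminus \overline \Delta)\]
and bounding the second piece by $\overline D\setminus \overline \Delta\subseteq \overline D\triangle \overline \Delta$, I conclude that $\overline{D\setminus \Delta}$ has dimension $\leq n-1$, so Proposition~\ref{prop-equiv-negli} gives that $D\setminus \Delta$ is $t$-negligible. Symmetry handles $\Delta\setminus D$, and hence $D\triangle \Delta$ is $t$-negligible.

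I do not anticipate a serious obstacle: the argument is a short Boolean calculation once Lemma~\ref{lem-almost-disjoint} and Proposition~\ref{prop-equiv-negli} are granted. The only point requiring a little vigilance is the asymmetry between $\overline{D\triangle \Delta}$ and $\overline D\triangle \overline \Delta$ (only one inclusion holds in general, and the proof must use it in the right direction), and the bookkeeping that the set-theoretic decompositions above are equalities rather than ``almost'' equalities, so that no extra negligible contribution is hidden.
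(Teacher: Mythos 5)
Your proof is correct and follows essentially the same route as the paper's: both rely on Lemma~\ref{lem-almost-disjoint} and Proposition~\ref{prop-equiv-negli} applied to the decomposition of $\overline D$ and $\overline\Delta$ into $\overline{D\cap\Delta}$, $\overline{D\setminus\Delta}$ and $\overline{\Delta\setminus D}$, yielding the identity $\overline D\cap\overline\Delta=\overline{D\cap\Delta}\cup(\overline{D\setminus\Delta}\cap\overline{\Delta\setminus D})$ for (2) and the negligibility transfer for (1). The only difference is a matter of presentation: the paper packages (1) as a single chain of equivalences, whereas you handle the two implications separately via the inclusion $\overline D\triangle\overline\Delta\subseteq\overline{D\triangle\Delta}$ in one direction and a direct application of Lemma~\ref{lem-almost-disjoint} to the disjoint pair $(D\setminus\Delta,\Delta)$ in the other.
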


\begin{proof}
Set $P=D\setminus \Delta$ and $Q=\Delta\setminus D$.
By Lemma \ref{lem-almost-disjoint}
above, $\overline Q$ and $\overline{D\cap \Delta}$ are almost disjoint, 
and so are $\overline P$ and $\overline{D\cap \Delta}$ as well as $\overline P$ and $\overline Q$. 
Moreover, 
we
have \[\overline D=\overline P\cup \overline{D\cap \Delta}\;\;\text{and}\;\;
\overline \Delta=\overline Q\cup \overline{D\cap \Delta}.\]
Hence $\overline D$ is almost equal to $\overline \Delta$ if and only if $\overline P$ and $\overline Q$ are
negligible, which amounts to requiring
that $P$ and $Q$ be $t$-negligible (\Prop \ref{prop-equiv-negli}), that is to say, that $D$ and $\Delta$ 
almost coincide, whence (1). Moreover, $\overline D\cap \overline \Delta=\overline {D\cap \Delta}\cup (\overline P
\cap \overline Q)$, and in view of the negligibility of $\overline P\cap \overline Q$ this implies (2). 
\end{proof}

\begin{coro}\label{coro-lift-compact}
Let $K$ be a definable definably compact subset of $R^n$. There exists a definable, definably compact
and $t$-bounded subset $E$ of $(^*\R)^n$ such that $\overline E$ almost coincides with $K$.
\end{coro}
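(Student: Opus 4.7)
\textbf{The plan} is to lift a polynomial description of $K$ to $A_{\mathrm r}$ and intersect with a large cube to enforce both $t$-boundedness and definable compactness. Since $K$ is closed and semi-algebraic in $R^n$, it may be written globally as
\[K=\bigcup_{i\in I}\bigcap_{j\in J_i}\{P_{ij}\geq 0\}\]
with $P_{ij}$ polynomials with coefficients in $R$. Definable compactness implies $K\subset[-N,N]^n$ for some positive integer $N$. Lifting each coefficient of each $P_{ij}$ to $A_{\mathrm r}$ produces polynomials $\widetilde P_{ij}$ with $t$-bounded coefficients whose coefficient-wise reductions are the $P_{ij}$. Set $\widetilde K:=\bigcup_{i\in I}\bigcap_{j\in J_i}\{y\in({}^*\R)^n:\widetilde P_{ij}(y)\geq 0\}$ and $E:=\widetilde K\cap[-N,N]^n$. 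Then $\widetilde K$ is closed, hence $E$ is closed and bounded and therefore definably compact, and $E\subset A_{\mathrm r}^n$ ensures that $E$ is $t$-bounded. The inclusion $\overline E\subset K$ is immediate: if $y\in E$ lies in $\bigcap_{j\in J_i}\{\widetilde P_{ij}\geq 0\}$, then $P_{ij}(\overline y)=\overline{\widetilde P_{ij}(y)}\geq 0$ for every $j\in J_i$, so $\overline y\in K_i\subset K$.

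\textbf{The reverse inclusion up to negligible error} is the main content. First, one prunes the description: polynomials $P_{ij}$ that are identically zero are dropped, and indices $i$ for which $\dim K_i<n$ (where $K_i:=\bigcap_j\{P_{ij}\geq 0\}$) are discarded, since such $K_i$ contribute only to a set of dimension $<n$. For each surviving $i$, every $P_{ij}$ is nonzero, so its vanishing locus has dimension $<n$; hence $U_i:=\bigcap_j\{P_{ij}>0\}$ differs from $K_i$ by a set of dimension $<n$. Setting $U:=\bigl(\bigcup_iU_i\bigr)\cap(-N,N)^n$ gives $\dim(K\setminus U)\leq n-1$. Given $x\in U$, fix $i$ with $x\in U_i$ and pick any lift $\widetilde x\in A_{\mathrm r}^n$ of $x$. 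Because $P_{ij}(x)>0$ in $R$, the element $\widetilde P_{ij}(\widetilde x)\in A_{\mathrm r}$ has strictly positive non-negligible reduction, and is therefore strictly positive in ${}^*\R$ for every $j\in J_i$; and since $\abs{x_k}<N$ in $R$, applying the same principle to $N-\abs{\widetilde x_k}$ yields $\abs{\widetilde x_k}<N$ in ${}^*\R$. Thus $\widetilde x\in E$ and $x=\overline{\widetilde x}\in\overline E$. Combined with $\overline E\subset K$, the symmetric difference of $\overline E$ and $K$ is contained in $K\setminus U$, which has dimension at most $n-1$, so $\overline E$ almost coincides with $K$.

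\textbf{The main obstacle} is the passage to the generic subset $U$: one must control the interaction of the reduction map $A_{\mathrm r}\to R$ with strict versus non-strict polynomial inequalities, and prune the Boolean decomposition of $K$ so that lifts of points in $U$ automatically satisfy the lifted strict inequalities. Once this is set up, the verification is essentially formal and boils down to the fact that strict positivity of a reduction forces strict positivity of the lifted element in ${}^*\R$.
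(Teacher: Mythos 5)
Your overall strategy — lift the Boolean polynomial description of $K$, truncate by a cube to obtain a $t$-bounded definably compact set, and control the discrepancy via the dimensions of zero loci of the defining polynomials — is sound. But the opening claim, that definable compactness of $K\subset R^n$ forces $K\subset[-N,N]^n$ for a positive \emph{integer} $N$, is false. In this paper definable compactness only means boundedness by some element $r\in R$, and $R$ contains elements exceeding every standard integer (for instance the reduction of $t^{-1}$). A definably compact $K$ can therefore escape every standard cube, and as written the subsequent construction of $E$ does not cover $K$.

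The fix is mild and is exactly what the paper does at the corresponding step: choose $M\in A_{\mathrm r}$ with $K\subset[-\overline M,\overline M]^n$ (possible since $A_{\mathrm r}\twoheadrightarrow R$ and $K$ is bounded in $R$) and set $E:=\widetilde K\cap[-M,M]^n$. The interval $[-M,M]$ still sits inside $A_{\mathrm r}$, so $E$ is $t$-bounded, and your lifting argument for strict inequalities goes through verbatim with $M-\abs{\widetilde x_k}$ in place of $N-\abs{\widetilde x_k}$, using that $\overline y>0$ in $R$ forces $y>0$ in $A_{\mathrm r}$ because $\mathfrak m_{\mathrm r}$ is convex. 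Once corrected, the rest of your argument — pruning identically zero polynomials, discarding pieces $K_i$ of dimension $<n$, and showing every lift of a point in the open locus $U$ lands in $E$ — is correct. Your route is organized a bit differently from the paper's: the paper reduces to a \emph{single} polynomial $f$ via Proposition \ref{prop-bar-almost} and reassembles the answer from pieces, while you prune the full Boolean decomposition up front and conclude with one dimension count on $K\setminus U$. Both ultimately rest on the same fact, that the zero locus of a non-zero polynomial has dimension $\leq n-1$.
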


\begin{proof}
Choose $a_1,\ldots,a_n$ and $b_1,\ldots, b_n$ in $A_{\mathrm r}$ such that 
$b_i>a_i$ for all $i$ and $K\subset \prod [\overline {a_i}, \overline {b_i}]$. 
By using
the description of definably closed subsets of $R^n$ provided
by Théorème 2.7.1 of \cite{bochnak-c-r1985}, we can assume that there exist finitely many polynomials $f_1,\ldots, f_m$
in $R[T_1,\ldots, T_n]$ such that $K$ is the intersection of $\prod [\overline {a_i}, \overline {b_i}]$
with the set of points $x$ such that $f_j(x)\geq 0$ for all $j$. By \Prop \ref{prop-bar-almost}
above me may assume that $m=1$, and write $f$ instead of $f_1$. If $f$ is constant the set $K$ is either empty or the whole of $ \prod [\overline {a_i}, \overline {b_i}]$ and the statement is obvious. If $f$ is non-constant, let $g$ be a polynomial with $t$-bounded coefficients that lifts $f$. Let $E$ be the intersection of $\prod [a_i,b_i]$ and the non-negative locus of $g$; it suffices to prove that $\overline E$ is almost equal to $K$. By definition, $\overline E\subset K$. Now let $x$ be a point on $K$ at which $f$ is positive, and let $\xi$ be any pre-image of $x$ on $\prod[a_i,b_i]$. Since $f(x)>0$ we have $g(\xi)>0$, hence $\xi \in E$ and $x\in \overline E$. Thus the difference $K\setminus E$ is contained in the zero-locus of $f$, which is at most $(n-1)$-dimensional since $f$ is non-constant. 
\end{proof}

\section{Smooth functions and smooth forms over $R$ and $C$}

\subsection{Smooth functions and smooth forms over the field $R$}
\label{sss-pinfty}
Recall that $A$ denotes the ring of $t$-bounded elements
of $^*\C$, $\mathfrak m$ denotes its maximal ideal
(\ie, the set of $t$-negligible elements)
and $A_{\mathrm r}$ and $\mathfrak m_r$
denote the intersections of $A$ and $\mathfrak m$
with $^*\R$. The reduction modulo $\mathfrak m$
will be denoted by $a\mapsto \overline a$.

\subsubsection{}Let $U$ be a semi-algebraic open subset of $\R^m$, for some $m$.

\paragraph{}
If $x$ is a point of $R^m$ lying on $U(R)$ and
if $\xi$ is any point of $A_{\mathrm r}^m$ lifting $x$, then 
$\xi$ lies on $U(^*\R)$: this comes from the fact that $U$ can be defined by a positive boolean combination of strict inequalities (which follows from Théorème 2.7.1 of \cite{bochnak-c-r1985}). 
For short, we shall call such a $\xi$ a lifting of $x$ in $U(^*\R)$.

\paragraph{}\label{par-def-tame}
Let $\phi$
be a smooth function from $U$ to $\R$. Let $x\in U(R)$. 
We shall say that $\phi$ is \emph{tame}
at $x$
if it satisfies the following condition: for every 
lifting $\xi$
of $x$ in $U(^*\R)$ and
every multi-index $I$, 
the element $\partial^I \phi(\xi)$ of $^*\R$ is $t$-bounded. 

If this is the case, then for every $\xi$ and every $I$ as above, 
the element  $\overline{\partial^I\phi(\xi)}$ of $R$ does not depend on $\xi$
(since $\partial^I\phi$ is Lipshitz with $t$-bounded constant around $\xi$).

\paragraph{}\label{par-tame-sum}
If $\phi$ is tame at $x$, so are all
of its partial derivatives; the sum
and the product of two smooth functions on $U$ that are
tame at $x$ 
are themselves tame at $x$.

\paragraph{}If $\phi$ is tame at $x$, we shall denote by $\phi(x)$ the element
$\overline{\phi(\xi)}$ for $\xi$ any lifting of $x$ in $U(^*\R)$ (it is well defined in 
view of \ref{par-def-tame}).

\subsubsection{Examples}\label{sss-tame-basic}
In each of the following examples, the function $\phi$ is tame
at every point of $U(R)$: 

\begin{itemize}[label=$\diamond$]

\item$U=\C^\times$ (viewed as a semi-algebraic subset of $\C\simeq \R^2$)
and $\phi=\abs \cdot$ ; 
\item 
$U=\R^\times$ and 
$\phi=z\mapsto z^n$ for some $n\in \Z$; 
\item $U=\R_{>0}$ and $\phi=\log$; 
\item $U=\R$ and $\phi$ is any trigonometric polynomial.
\end{itemize}

The function
$x\mapsto \exp(1/x)$ (defined on $\R^\times$)
is not tame at the element $t$
of $^*\R^\times$: indeed, $\exp (1/t)$ of $^*\R$
is not $t$-bounded.

\subsubsection{Composition of tame functions}
\label{sss-compose-tame}
Let $U$ be a semi-algebraic open subset 
of $\R^m$, and let $V$ be a semi-algebraic open subset of $\R^n$. 
Let $\phi=(\phi_1,\ldots, \phi_n)$ be smooth functions from
$U$ to $\R^n$ and assume
that $\phi(U)\subset V$. Let $\psi$ be a smooth function on $V$. 

Let $x$ be a point of $U$ such that every $\phi_i$ is tame at $x$, 
and such that $\psi$ is tame at $\phi(x)$. It follows straightforwardly
from the definition that 
$\psi \circ \phi$
is tame at $x$.

Using this together with \ref{sss-tame-basic}, we see that 
\[\C^\times \to \R, z\mapsto \log \abs z\] is tame at every point of 
$C^\times$, and that 
\[\C^\times\setminus\;\{z,\abs z=1\} \to \R, z\mapsto 1/\log \abs z\]
is tame at every point of $C^\times\setminus \;\{z\in C^\times,\abs z=1\}$.

\subsubsection{Smooth functions and smooth forms on a variety}\label{smoothfunc}
Let $X$ be a smooth separated $R$-scheme of
finite type.

Let $U$ be a semi-algebraic open subset of $X(R)$
and let $g$ be a regular map from a Zariski-open
subset of $X$ containing $U$ to $\A^m_R$ for some $m$. 
A \emph{$(U,g)$-tame} smooth function
is a smooth function $\phi$ defined on some semi-algebraic
open subset $V$ of $\R^m$ with $g(U)\subset V(R)$, such that 
$\phi$ is tame at $g(x)$ for every $x\in U$. 

Let $\mathscr F$ be the assignment that
sends a semi-algebraic open subset $U$ of $X(R)$ to the set of functions from $U$
to $R$ of the form 
$\phi \circ g$, where
$g$ is a regular map from a Zariski-open
subset of $X$ containing $U$ to $\A^m_R$ for some $m$, and 
where $\phi$ is a $(U,g)$-tame smooth function. 

Then $\mathscr F$ is a presheaf; its associated sheaf for the semi-algebraic topology
is denoted by $\mathscr C^\infty$ or $\mathscr C^\infty_X$ and called the sheaf of \emph{smooth functions}
on $X(R)$. It makes $X(R)$ a locally ringed site. 

The natural embedding of
$X(R)$ into the scheme $X$ underlies a morphism of locally ringed
sites $\psi \colon (X(R),\mathscr C^\infty_X)\to (X,\mathscr O_X)$; hence $\psi^*\Omega^p_{X/R}$
is for every $p$ a well-defined $\mathscr C^\infty_X$-module on  $X(R)$, which we denote by $\mathscr A^p$
or $\mathscr A^p_X$. 
The sheaf $\mathscr A^0_X$ is equal to $\mathscr C^\infty_X$, and the $\mathscr C^\infty_X$-module $\mathscr A^1_X$ 
is locally free (of rank $n$ if $X$ is of pure dimension $n$); for every $p$, we have $\mathscr A^p_X=\Lambda^p\mathscr A_X^1$. 
The sheaf $\mathscr A_X^p$ is called the sheaf of \emph{smooth $p$}-forms on $X(R)$. One has for every $p$
a natural differential $\d \colon \mathscr A^p_X\to \mathscr A^{p+1}_X$. 
The sheaf  $C\otimes_R\mathscr A_X^p$ is called the sheaf of \emph{complex-valued}
$p$-forms on $X(R)$. 

\subsection{The case of a variety over $C$}\label{sss-def-cplxforms}
By considering Weil restriction we can apply the above to smooth schemes of finite type
over the field $C$. For such a scheme $X$ and every $m$ we get a sheaf $\mathscr A^m_X$
of $R$-vector spaces on $X(C)$ (equipped with the semi-algebraic topology). This sheaf
comes with a natural decomposition 
\[C\otimes_R \mathscr A^m_X=\bigoplus_{p+q=m}\mathscr A^{p,q}_X,\]
where
$\mathscr A^{p,q}$ is the sheaf of $(i,j)$-forms; \ie, of $C$--valued $p$-forms generated over $\mathscr C^\infty$ by forms of
the type \[\d f_1\wedge \ldots \wedge \d f_p\wedge \d\overline {g_1}\wedge \ldots
\wedge \d \overline{g_q}\]
for some regular functions $f_1,\ldots, f_p,g_1,\ldots, g_q$
(this is analogous to \ref{int-nnform-bigR}).

\subsubsection{Polar coordinates}
The usual real functions $\cos$ and $\sin$ are tame at every point of $R$, 
hence $\theta\mapsto \cos \theta+i\sin \theta$ is a well-defined
smooth $C$-valued function on $R$, which we
denote by $\theta\mapsto e^{i\theta}$.
The map $\theta\mapsto e^{i\theta}$ is a surjective homomorphism from $R$ to $\{z\in C^\times, \abs z=1\}$. The map
$\theta\mapsto e^{i\theta}$ is not injective; its kernel consists of elements of the form $2\pi n$ where $n$ is a (possibly) non-standard integer; \ie, it
can be written as the (class of)
the
limit of a $t$-bounded sequence of integers. For every $a\in R$, the restriction of $\theta\mapsto e^{i\theta}$
to $[a,a+2\pi)$ and $(a,a+2\pi]$ is injective. 

Every element $z$ of $C^\times$ can be written $re^{i\theta}$ with $r\in R_{>0}$ and $\theta\in R$. The element $r$ is unique
(it is equal to $\abs z$), but $\theta$ is not -- we say that $\theta$ is an argument of $z$. 

Making $z$ vary, we get two ``functions" $r$ and $\theta$ on $C^\times =\mathbf G_{\mathrm m}(C)$. More precisely, $r$ is an actual  function which is tame at every point and
takes its values in $R_{>0}$,
and $\d r$ and $\d \log r=\frac{\d r}r$ are well-defined differential forms on $C^\times$. But $\theta$
is only a multivalued function; nevertheless, the differential form $\d \theta$
is also well-defined. Let us quickly explain how. Let $z_0\in C^\times$ and let $a$ be any element of $R$ such that 
$z_0$ has an argument $\theta_0$ in $(a-\pi,a+\pi)$ (this always holds for $a=0$ or $a=\pi$). Then on a suitable semi-algebraic neighborhood $U$ of $z_0$ in $C^\times$ we have a single-valued
smooth argument function $\theta$ with values in $(a-\pi,a+\pi)$ (and $\theta(z_0)=\theta_0$). The
smooth form $\d \theta$ is well-defined on $U$. From the equality $z=re^{i\theta}$ we 
get 
\[\d z=e^{i\theta}\d r +rie^{i\theta}\d \theta,\]
and then
\[\d \theta=-\frac i r e^{-i\theta}\d z -i\frac{\d r}r.\]
This last formula does not involve the choice of $z_0$, $a$ and $\theta_0$ anymore, and we use it to define $\d \theta$ on the whole of $C^\times$. 

If we see $z$ as an invertible function on $C^\times$ we shall write $\dll{\abs z}$ instead of $\frac {\d r}r$ and
$\da z$ instead of $\d \theta$. 

From the equality $z\overline z=r^2$ we get
\[\dll \abs{z}=\frac 1 2 \cdot \frac {2\d r}r=\frac 1 2\left( \frac {\d z}z+\frac {\d \overline z}{\overline z}\right).\]
From the equality $\frac z{\overline z}=e^{2i\theta}$ we get
\[\da z=\frac 1 2 \cdot 2 \d \theta=\frac 1 {2i} \cdot \frac{\d(e^{2i\theta})}{e^{2i\theta}}=\frac 1 {2i} \left( \frac {\d z}z-\frac {\d \overline z}{\overline z}\right).\]

\subsubsection{The definition of $\ds$}
\label{def-d-sharp}
Let $X$ be a smooth scheme of finite type over $C$. Our purpose is to define an operator $\ds$ on complex-valued smooth forms
on $X(C)$ (which is a non-standard avatar of $\mathrm d^{\mathrm c}$ up to a constant).

Let us denote for short by $\mathscr C^\infty_{X,C}$ (\resp $\mathscr A^p_{X,C}$) 
the sheaf $C\otimes_R \mathscr C^\infty_X$ (\resp $C\otimes_R \mathscr A^p_X$).
The sheaf $\mathscr A^1_{X,C}$ of complex-valued smooth $1$-forms on $X(C)$ admits a canonical decomposition
$\mathscr A^1_{X,C}=\mathscr A^{1,0}\oplus \mathscr A^{0,1}$. The formula $(\omega,\omega')\mapsto 
(-i\omega, i\omega')$
defines an order 4 automorphism $\mathrm J$ of the $\mathscr C^\infty_{X,C}$-module
$\mathscr A^1_{X,C}$; we still denote by $\mathrm J$ the induced automorphism of $\mathscr A^p_{X,C}$. 
We remark that $\mathscr A^{2n}_{X,C}\simeq \mathscr A^{n,0}\otimes_{\mathscr C^\infty_{X,C}}\mathscr A^{0,n}$, 
so that the operator $\mathrm J$ on $\mathscr A^{2n}_{X,C}C$ is nothing but $(-i)^ni^n\mathrm{Id}=\mathrm{Id}$.

We then define the derivation $\ds \colon \mathscr C^\infty_{X,C}\to \mathscr A^1_{X,C}$
as being equal to $(\mathrm J\circ \d)/2\pi$
(this is an avatar of the classical 
operator $\mathrm d^{\mathrm c}$); it extends to a compatible system of exterior derivations

\[\ds:=\frac 1{2\pi}\mathrm J\circ d\circ \mathrm J^{-1}\colon \mathscr A^p_{X,C}\to \mathscr A^{p+1}_{X,C}.\]

Let us see how it acts on polar coordinates. We have 
\begin{align*}
\ds (\log r)&=\frac 1{2\pi}\mathrm J(\dll r)\\
&=\frac 1{2\pi}\mathrm J\left( \frac 12\left( \frac {\d z}z+\frac {\d \overline z}{\overline z}\right)\right)\\
&=\frac 1{2\pi}\left(\frac 1 2\left( -i\frac {\d z}z+i\frac {\d \overline z}{\overline z}\right)\right)\\
&=\frac 1{2\pi}\left(\frac 1 {2i}\left(\frac {\d z}z-\frac {\d \overline z}{\overline z}\right)\right)\\
&=\frac{\d \theta}{2\pi}
\end{align*}
and
\begin{align*}
\ds (\theta)&=\frac 1{2\pi}\mathrm J(\d \theta)\\
&=\frac 1{2\pi}\mathrm J\left(\frac 1{2i}\left( \frac {\d z}z-\frac {\d \overline z}{\overline z}\right)\right)\\
&=\frac 1{2\pi}\left(\frac 1 {2i}\left( -i\frac {\d z}z-i\frac {\d \overline z}{\overline z}\right)\right)\\
&=\frac 1{2\pi}\left(\frac 1 2\left(-\frac {\d z}z-\frac {\d \overline z}{\overline z}\right)\right)\\
&=-\frac{\dll r}{2\pi}.
\end{align*}

Note that since $(\ds)^2=0$ this implies that $\ds (\dll r)=0$ and $\ds(\d \theta)=0$. 

More generally if $f$ is an invertible regular function defined on some Zariski-open subset $U$ of $X$
we can define $\dll \abs f$ and $\da f$. Those are smooth forms on $U(C)$ and we have the following equalities
\begin{align*}
\dll \abs f&=\frac 1 2\left(\frac {\d f}f+\frac{\d \overline f}{\overline f}\right)\\
\da  f&=\frac 1 {2i}\left(\frac {\d f}f-\frac{\d \overline f}{\overline f}\right)\\
\ds(\log \abs f)&=\frac{\da f}{2\pi}\\
\ds(\arg f)&=-\frac{\dll \abs f}{2\pi}.
\end{align*}

\subsection{}
Now we introduce a particular class of smooth functions and forms on
smooth schemes over $C$
that will play a crucial role in our work. 
Roughly speaking, these are the functions and forms that have a natural counterpart in the Berkovich setting - we will make this rather vague
formulation more precise later.

\begin{defi}\label{def-psmooth}
Let $V$ be an open subset of $(\R\cup\{-\infty\})^m$ which can be defined by a boolean combination of
$\Q$-linear inequalities
and let $\phi$
be a function
from $V$ to $\C$.
We shall say that $\phi$ is a
\emph{reasonably smooth} function
if there exists:
\begin{itemize}[label=$\diamond$]
\item a finite open cover $(V_i)_i$ of $V$, 
where each $V_i$ is also defined by $\Q$-linear inequalities; 
\item for every $i$, a 
subset $J_i$ of $\{1, \ldots, m\}$ with
$\Omega_i:=p_{J_i}(V_i)\subset \R^{J_i}$, where $p_{J_i}$ is the projection onto the coordinates
belonging to $J_i$); 
\item for every $i$, a smooth function $\phi_i$ on $\Omega_i$ 
such that $\phi|_{V_i}=\phi_i\circ p_{J_i}|_{V_i}$. 
\end{itemize}
The data $(V_i, J_i, \Omega_i, \phi_i)_i$ will be called a \emph{nice description}
of $\phi$. 

If $J$ is some subset of $\{1,\ldots, m\}$ we shall say that
$\phi$ is \emph{$J$-vanishing}
if there exists an open subset $V'$ of $V$ satisfying the following: 
\begin{itemize}[label=$\diamond$]
\item $V'$ can be defined by $\Q$-linear inequalities; 
\item $\phi|_{V'}=0$;
\item for every $x=(x_1,\ldots, x_m)\in V\setminus V'$ and every $i\in J$, the coordinate $x_i$ 
is not equal to $(-\infty)$.
\end{itemize}

Note that $\phi$ is automatically $\emptyset$-vanishing; 
indeed, if $J=\emptyset$ then the above conditions are fulfilled by $V'=\emptyset$.
\end{defi}

For instance, a
reasonably smooth function $\phi$ on $\R\cup\{-\infty\}$ is nothing but a smooth function $\phi$
on $\R$ such that there exists
$\lambda \in \R$ with $\phi(x)=\lambda$ for $x\ll 0$ (and the value of $\phi$ at $-\infty$ is then set equal to $\lambda$); it is $1$-vanishing if and only if $\lambda=0$. 

\subsection{}
Let $V$ be an open subset of $(\R\cup\{-\infty\})^m$ which can be defined by a boolean combination of
$\Q$-linear inequalities. The following facts follow straightforwardly from the definition. 

\subsubsection{}
If $\phi\colon V\to \R$ is a reasonably smooth function, then it is continuous, and $\phi|_{V\cap \R^m}$ is smooth.

\subsubsection{}
For $V\subset \R^m$, a function from $V$ to $\R$ is  reasonably smooth if and only if it is smooth. 

\subsubsection{}\label{sss-pseudo-der}
The set of reasonably smooth functions on $V$ is a subalgebra of the algebra of $\R$-valued functions on $V$.
It is endowed with 
partial derivation operators defined in the obvious way. 

Let $\phi$ be a reasonably smooth function on $V$ that is $J$-vanishing for some
subset $J$
of $\{1,\ldots, m\}$. Let $j\in J$. Let us show that $\partial_j \phi$ is $(J\cup\{j\})$-vanishing. 

Let $(V_i, J_i,\Omega_i,\phi_i)_i$ be a nice description of $\phi$ and let $V'$
be an open subset of $V$ that witnesses
the fact that $\phi$ is $J$-vanishing.
Let $V''$ be the union of $V'$ and of all the open sets $V_i$ such that 
$j\notin J_i$. We claim that $V''$ witnesses the fact
that $\partial_j \phi$
is $(J\cup\{j\})$-vanishing. Indeed, $\partial_j \phi$
is zero on $V'$ since so is $\phi$; and if $i$ is such that $j\notin J_i$ then $\phi|_{V_i}$
dose not depend on the $j$-th coordinate, so $\partial _j\phi$ is zero on $V_i$; thus
 $\partial_j \phi$ is zero on $V''$. 
 
Let $x\in V\setminus V''$; choose $i$ such that $x=(x_1,\ldots, x_m)\in V_i$. By definition of $V''$, the set 
$J_i$ contains $j$. Hence $x_j\neq (-\infty)$, whence our claim.

\subsection{Smooth functions and smooth forms on a $C$-scheme: a fundamental example}
Let $V$ be an open subset of $(\R\cup\{-\infty\})^m$ which can be defined by a boolean combination of
$\Q$-linear inequalities,
and let $\phi$
be a reasonably smooth function
from $V$ to $\C$.

Let 
$W$ be the
semi-algebraic open subset of $\C^{m+1}$
consisting of points 
$(a_1,\ldots,a_m, b)$ such that $0<\abs b<1$  and $(-\log \abs{a_i}/\log
\abs b)_i \in V$. 
By construction,
\[\Phi
\colon
(a_1,\ldots, a_m,b)\mapsto \phi(-\log \abs {a_1}/\log \abs b, \ldots,- \log
\abs {a_m}/\log \abs b)\]
is a well-defined $\mathscr C^\infty$ map from $W$ to $\C$. 

Let $X$ be a smooth $C$-scheme of finite type and let $U$ be a semi-algebraic open
subset of $X(C)$. Let $g=(g_1,\ldots, g_m)$ be a regular map from a Zariski-open subset of $X$ containing $U$
to $\A^m_C$, and assume that
$(g_1,\ldots, g_m,t)(U)\subset W(C)$
(here the element $t$ of $C$ is viewed as a constant regular function).

\subsubsection{The smooth function $\Phi$ on $W$
is $(U,(g_1,\ldots, g_m,t))$-tame}
To see it, fix a nice description 
$(V_i, J_i,\Omega_i, \phi_i)_i$ of $\phi$.
For every $i$, denote by 
$W_i$ the pre-image of $V_i$ in $W$
under the map $(a_1,\ldots, a_m,b)\mapsto (-\log \abs{a_j}/\log \abs b)$, 
and let $U_i$ denote the pre-image of
$W_i$ in $U$ under the map 
$(g_1,\ldots, g_m,t)$. 

We fix $i$, and we are going to show
that $\Phi$ is
$(U_i, (g_1,\ldots, g_m,t))$-tame,
which will imply 
our claim. In view of \ref{par-tame-sum}
and \ref{sss-compose-tame}, it suffices to prove that for every $x\in U_i$, 
the map $\phi_i$ is tame
at the point $y:=(-\log \abs{g_j(x)}/\log \abs t)_{j\in J_i}$ of $\Omega_i(R)$.
But the coordinates of $y$ are bounded
(as is $\log r / \log \abs t$ for every $r\in R_{>0}$),
so the coordinates of $\eta$ are bounded for
every lifting $\eta$ of $y$, which implies that all partial derivatives of $\phi_i$
are bounded, and a fortiori $t$-bounded, at $\eta$; thus $\phi_i$ is tame at $y$.

\subsubsection{}
We can thus compose $\Phi$ and
$(g_1,\ldots, g_m,t)$
to get a smooth map on $U$, 
which we can safely write 
\[x\mapsto \phi(-\log {\abs{g_1}}/\log \abs t, \ldots, -\log \abs{g_m}/\log \abs t);\]
its restriction to every $U_i$ can be written 
\[x\mapsto \phi_i(-\log {\abs{g_j}}/\log \abs t)_{j\in J_i}.\]

\subsubsection{}\label{sss-typical-smoothform}
Let $I$ and $J$ be two subsets of $\{1,\ldots, m\}$ of respective cardinalities $p$
and $q$ such that $\phi$ is $(I\cup J)$-vanishing. 

Let $U'$ be the pre-image of $V'$ in $U$ under
$(-\log \abs{g_1}/\log \abs t, \ldots, -\log \abs{g_m}/\log \abs t)$, 
and let $U''$ be the subset of $U$ consisting of points at which all the functions $g_i$
with $i\in I\cup J$ are invertible. 
Let $\omega$ be the $(p,q)$-form on $U''$ equal to
\[\left(\frac{-1}{\log \abs t}\right)^p\phi(-\log {\abs{g_1}}/\log \abs t, \ldots, -\log \abs{g_m}/\log \abs t)
\dll \abs{g_I}\wedge \da g_J\]
(where $\dll  \abs{g_I}
=\dll \abs{g_{i_1}}\wedge\ldots\wedge \dll \abs{g_{i_p}}$
if $i_1<i_2<\ldots<i_p$ are the elements of $I$, and similarly for
$\da\abs{g_J}$). 
Since $\phi$ is $(I\cup J)$-vanishing, the restriction of $\omega$
to $U'\cap U''$ is zero, so that $\omega$ and the zero form on $U'$ glue to 
a $(p,q)$-form on $U$ which (obviously) does not depend
on $V'$; we shall allow ourselves to denote it
by
\[\left(\frac{-1}{\log \abs t}\right)^p\phi(-\log {\abs{g_1}}/\log \abs t, \ldots, -\log \abs{g_m}/\log \abs t)
\dll \abs{g_I}\wedge \da g_J.\]

\section{Integrals of smooth forms over $R$ and $C$}

\subsection{}
The purpose of this section is to integrate forms on a smooth scheme defined over the field $R$. The  rough idea is quite natural
(and unsurprising): lift the situation over $A_{\mathrm r}$, compute the integral over $^*\R$ like in section
\ref{section3}, and then take its class modulo the ideal $\mathfrak m_{\mathrm r}$ of $t$-negligible elements. 

First of all, we shall assume that we are given two different liftings of a very specific form, and show that the  integrals over $^*\R$ 
to
which they give rise 
coincide modulo $\mathfrak m_{\mathrm r}$ (\Prop \ref{prop-key-integration}
below); the proof rests in a crucial way on our former study of cubes with $t$-negligible volume and uses the notion of ``almost equality"
over $^*\R$ as well as over $R$ (see \Prop \ref{prop-equiv-negli}, \ref{definition-almost-coincidence},
and \Prop \ref{prop-bar-almost}), together with Hensel's lemma. 

Then we shall handle the general case, the point being that a form on a smooth $R$-scheme always admits locally for the Zariski topology a lifting
of the kind dealt with by \Prop \ref{prop-key-integration}; so this part is somehow tedious but rather formal once \Prop
\ref{prop-key-integration}
is taken for granted.

\subsection{}
If $\mathscr X$ is an affine
$A_{\mathrm r}$-scheme of finite type, a definable subset $E$
 of $\mathscr X(^*\R)$ will be called \emph{$t$-bounded}
 if it is contained in $\mathscr X(A_{\mathrm r})$. We remark that $E$ is $t$-bounded if
and only if
its topological closure
is $t$-bounded, and if this is the case then the latter is even
definably compact. Indeed, by embedding $\mathscr X$ in an affine space and arguing componentwise
 we reduce to the case where $\mathscr X=\A^1_{A_{\mathrm r}}$, 
 for which our statement follows from o-minimality.

\begin{prop}\label{prop-key-integration}
Let $Z$ be a smooth $R$-scheme of finite type and pure dimension $n$, and let
$h=(h_1,\ldots, h_n)$ be an étale map $Z\to \mathbf A^n_R$
factorizing through an immersion
$(h, h_{n+1})\colon Z\hookrightarrow \A^{n+1}_R$.
Let $\mathscr X$ and $\mathscr Y$ be two smooth
$\mathrm{A_r}$-schemes of finite type
and of pure relative dimension $n$, equipped with identifications
$\mathscr X_R\simeq Z$ and $\mathscr Y_R\simeq Z$. Let $f=(f_1,\ldots, f_n) \colon \mathscr X\to
\mathbf A^n_{A_{\mathrm r}}$ and $g=(g_1,\ldots, g_n) \colon \mathscr Y\to
\mathbf A^n_{A_{\mathrm r}}$ be two étale maps, factorizing 
respectively through a closed immersion $(f, f_{n+1}) \colon \mathscr X\hookrightarrow
\mathbf A^{n+1}_{A_{\mathrm r}}$
and 
$(g, g_{n+1}) \colon \mathscr Y\hookrightarrow
\mathbf A^{n+1}_{A_{\mathrm r}}$; assume that for all $i$ one has $f_i|_Z=g_i|_Z=h_i$. 

Let
$E$, resp. $F$, be a $t$-bounded 
semi-algebraic
subset of $\mathscr X(^*\R)$, resp. $\mathscr Y(^*\R)$; assume that
the subsets $\overline E$ and $\overline F$ of $Z(R)$ almost coincide. 

Let $\phi$ be a smooth function defined on a neighborhood of $E$ in $\mathscr X(A_{\mathrm r})$, 
of the form $\phi_0\circ \lambda$ with $\phi_0$ a $\mathscr C^\infty$ function and $\lambda$ a tuple 
of regular functions on $\mathscr X$; let $\psi$ be a smooth function defined on a neighborhood of $F$ in $\mathscr Y(A_{\mathrm r})$, 
of the form $\psi_0\circ \mu$ with $\psi_0$ a $\mathscr C^\infty$ function and $\mu$ a tuple 
of regular functions on $\mathscr Y$. Assume that
there exists a semi-algebraic open subset $O$ of $Z(R)$ containing
$\overline E$
and $\overline F$ such that $\phi_0$ is
$(O,\lambda|_Z)$-tame, $\psi_0$ is $(O,\mu|_Z)$
tame, and the smooth functions $\phi_0\circ (\lambda|_O)$ 
and $\psi_0\circ (\mu|_O)$ 
coincide on some semi-algebraic
subset of $O$
almost equal to $\overline E$ and $\overline F$. 

Then
$\int_E \phi \d f_1\wedge \ldots \wedge \d f_n$
and $\int_F \psi \d g_1\wedge \ldots \wedge \d g_n$
are $t$-bounded and coincide up to a $t$-negligible element.
\end{prop}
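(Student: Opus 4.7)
My plan is to push both integrals down to $(^*\R)^n$ along the étale maps $f$ and $g$ and compare them there using Proposition~\ref{prop-equiv-negli}. Since $f\colon \mathscr{X}\to \A^n_{A_{\mathrm r}}$ is étale and factors through a closed immersion into $\A^{n+1}_{A_{\mathrm r}}$, its restriction to the $t$-bounded set $E$ has finite fibers of uniformly bounded cardinality. O-minimality then produces a finite almost-partition $(E_k)_k$ of $E$ such that $f|_{E_k}$ is a definable homeomorphism onto its image $E'_k \subset (^*\R)^n$. Change of variables yields
\[
\int_E \phi\, \d f_1\wedge \cdots \wedge \d f_n = \sum_k \epsilon_k \int_{E'_k} \Phi_k\, \d x_1\wedge \cdots \wedge \d x_n,
\]
with $\Phi_k := \phi\circ (f|_{E_k})^{-1}$ and $\epsilon_k \in \{\pm 1\}$ determined by the orientation. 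An analogous decomposition yields $(F'_l, \Psi_l)$ from $F$ and $g$.

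\textbf{$t$-boundedness and matching.} Tameness of $\phi_0$ together with $t$-boundedness of $E$ forces $\Phi_k$ to be $t$-bounded on $E'_k$, which itself has $t$-bounded volume; hence the integral over $E$ is $t$-bounded, and similarly for $F$. To compare the two integrals in $(^*\R)^n$, I would refine the partitions—using Hensel's lemma applied to the étale maps $f$ and $g$, which share the common reduction $h$—so that $\overline{E'_k} = h(\overline{E_k})$ and $\overline{F'_l} = h(\overline{F_l})$, and so that $h$ is injective on each $\overline{E_k}$ and each $\overline{F_l}$. The hypothesis that $\overline E$ and $\overline F$ almost coincide in $Z(R)$, combined with the fact that $h$ is étale (preserving dimension), then implies that $\bigcup h(\overline{E_k})$ and $\bigcup h(\overline{F_l})$ almost coincide in $R^n$; by Proposition~\ref{prop-bar-almost}(1), the sets $E' := \bigsqcup E'_k$ and $F' := \bigsqcup F'_l$ almost coincide in $(^*\R)^n$.

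\textbf{Comparison and main obstacle.} Writing
\[
\int_{E'}\Phi\, \d x_1\wedge\cdots\wedge \d x_n - \int_{F'}\Psi\, \d x_1\wedge\cdots\wedge \d x_n = \int_{E'\cap F'}(\Phi-\Psi) + \int_{E'\setminus F'}\Phi - \int_{F'\setminus E'}\Psi,
\]
the last two terms are $t$-negligible by Proposition~\ref{prop-equiv-negli}(ii), since the domains are $t$-negligible and the integrands $t$-bounded. For the first term, the hypothesis that $\phi_0\circ \lambda|_O = \psi_0\circ \mu|_O$ on a subset of $O$ almost equal to $\overline E$ and $\overline F$, together with tameness of $\phi_0$ and $\psi_0$, gives $\overline{\Phi(y) - \Psi(y)} = 0$ for every $y\in E'\cap F'$ whose reduction lies outside a subset of dimension $\leq n-1$. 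Splitting $E'\cap F'$ accordingly, the contribution from the bad part is $t$-negligible by Proposition~\ref{prop-equiv-negli}, while on the good part the integrand $\Phi-\Psi$ is pointwise in $\mathfrak m_{\mathrm r}$: as this holds for every $y$, the definable supremum is bounded by $\abs t^N$ for every $N$, hence is $t$-negligible, and its integral over a $t$-bounded volume is $t$-negligible as well. The main technical hurdle is the compatible refinement of the partitions of $E$ and $F$: without an a priori identification of $\mathscr X$ and $\mathscr Y$ beyond the common reduction $Z$, matching $E_k$ with $F_l$ through their shared image in $Z(R)$ requires Hensel's lemma to lift the étale-local inverses of $h$ from $R$ to $A_{\mathrm r}$ consistently with both $f$ and $g$.
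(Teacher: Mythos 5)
Your overall strategy coincides with the paper's: push both integrals down to $(^*\R)^n$ along $f$ and $g$, control the domains via Propositions~\ref{prop-equiv-negli} and~\ref{prop-bar-almost}, and reduce the comparison of integrands to the tameness hypotheses. But there are two genuine gaps in the execution, both concentrated in the last two paragraphs.

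First, the sets $E' := \bigsqcup_k E'_k$ and $F' := \bigsqcup_l F'_l$ are abstract disjoint unions, yet the identity
\[
\int_{E'}\Phi - \int_{F'}\Psi = \int_{E'\cap F'}(\Phi-\Psi) + \int_{E'\setminus F'}\Phi - \int_{F'\setminus E'}\Psi
\]
treats them as subsets of $(^*\R)^n$ carrying single-valued functions $\Phi$, $\Psi$. That is inconsistent: the images $E'_k = f(E_k)$ overlap, and $\Phi_k = \phi\circ (f|_{E_k})^{-1}$ depends on $k$, so $\Phi$ is not a function on $\bigcup_k E'_k$. The object that is well-defined downstairs is the \emph{fiber sum} $\Phi^*(x) = \sum_{z\in f^{-1}(x)\cap E}\phi(z)$, and the quantity you must control is $\Phi^*-\Psi^*$. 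Comparing these two fiber sums requires you to \emph{pair} the $n_i$ preimages of $x$ in $E$ with the $n_i$ preimages in $F$ before you can invoke the hypothesis that $\phi_0\circ\lambda|_O$ and $\psi_0\circ\mu|_O$ agree.

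Second, you name exactly this pairing as ``the main technical hurdle'' and leave it to Hensel's lemma without carrying it out — but this is not a side technicality, it is the crux of the argument, and Hensel's lemma alone does not resolve it. Hensel's lemma gives you, for each $R$-point $z$ of $Z$ above $\overline{w}$, a unique $A_{\mathrm r}$-point of $\mathscr X$ above $w$ reducing to $z$ (and likewise for $\mathscr Y$); what it does \emph{not} give you, by itself, is a definable way to label the $n_i$ sections of $f|_E$ and of $g|_F$ over a piece of $(^*\R)^n$ so that the $j$-th of one matches the $j$-th of the other. This is precisely why the hypotheses include the factorizations through immersions $(f,f_{n+1})$, $(g,g_{n+1})$, $(h,h_{n+1})$ with $f_{n+1}|_Z = g_{n+1}|_Z = h_{n+1}$: the extra coordinate realises each fibre of $f$, $g$, $h$ as a finite subset of the affine line over the base field, hence linearly ordered, and the orders are compatible under reduction. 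The paper then writes $\sigma_1 < \cdots < \sigma_{n_i}$ (sections of $f|_E$) and $\tau_1 < \cdots < \tau_{n_i}$ (sections of $g|_F$) over the good locus $\Omega'_i$ and observes that $\overline{\sigma_j(x)} = \overline{\tau_j(x)}$ for every $j$ and $x$, since both are the $j$-th preimage of $\overline{x}$ in $\overline E \cap \overline F$. From there, $\abs{\phi\circ\sigma_j - \psi\circ\tau_j}$ is pointwise $t$-negligible, hence (by definability of a supremum) bounded by a single $t$-negligible $\epsilon$, and the difference of integrals is $\leq n_i\,\epsilon\,\mathrm{Vol}(\Omega'_i)$, which is $t$-negligible. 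You should fill in exactly this step, using the $(n+1)$-st coordinate to order fibres and then matching sections termwise; without it the proof does not close.

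A smaller but related point: before any of this, you also need to show that the two fibre counts agree over almost all of the common image — i.e., that the set of points of $\Omega_i$ with exactly $n_i$ preimages under $f$ in $E$ \emph{and} exactly $n_i$ preimages under $g$ in $F$ is almost all of $\Omega_i$. Your proposal asserts this implicitly (``by Proposition~\ref{prop-bar-almost}(1), $E'$ and $F'$ almost coincide'') but does not prove it; the paper devotes a separate contradiction argument (its step \ref{sss-number-of-sections}) to this, again hinging on the lifted cube/almost-partition machinery and Lemma~\ref{lem-almost-disjoint}.
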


\begin{proof}
We begin with noting that our tameness assumptions on $\phi_0$, \resp $\psi_0$, 
imply
that $\phi$, \resp $\psi$, takes only $t$-bounded values on $E$,
\resp $F$; this in turn implies that it is \emph{uniformly}
$t$-bounded on $E$, \resp $F$. The $t$-boundedness of the integrals
involved in our statement follows immediately. 

Throughout the proof, we will
use the map $f_{n+1}$, resp. $g_{n+1}$, resp. $h_{n+1}$ to see 
any fiber of $f$, resp. $g$, resp. $h$, 
as
a subset of the affine line over its ground field, and we will
repeatedly use the following fact, which is a consequence of the Henselian property
of the local ring $A_{\mathrm r}$ : if $w$ is a point of $A_{\mathrm r}^n$ with image $\overline w$ in
$R^n$, then 
for every $z\in Z(R)$ lying above $\overline w$ there exists a unique pre-image $\zeta$
of $w$ in $\mathscr X(A_{\mathrm r})$ (resp. $\mathscr Y(A_{\mathrm r})$) with $\overline \zeta=z$.

The subsets $\overline E$ and $\overline F$ of $Z(R)$ are definable, closed and bounded (because $E$ and $F$ are bounded); so they are definably compact.
The sets $h(\overline E)$ and $h(\overline F)$ are definably compact, and they almost coincide since $\overline E$ and $\overline F$ almost coincide.
so they have the same $n$-dimensional locus $\Theta$; and the set $h(\overline E\bigtriangleup \overline F)$ 
is negligible. 
It follows that there exists an almost partition $(\Theta_i)$ of $\Theta$ (and thus of $h(\overline E)$
as well as of $h(\overline F)$)
by definably compact definable subsets satisfying the following: for every $i$ there exists an integer $n_i$ such that 
the subset $\Theta'_i$ of $\Theta_i$ consisting of points having exactly $n_i$ pre-images in $\overline E\cap \overline F$ and no pre-image
in $\overline E\bigtriangleup \overline F$
is almost equal to $\Theta_i$. 

Now for every $i$ there exists a $t$-bounded definable subset $\Omega_i$ of $(^*\R)^n$ such that 
$\overline {\Omega_i}$ is almost equal to $\Theta_i$ (Corollary \ref{coro-lift-compact}). By \Prop \ref{prop-bar-almost} the family $(\Omega_i)$ is an almost partition of $f(E)$ as well as of $g(F)$. For every $i$, let $\Omega'_i$ be the subset of $\Omega_i$
consisting of
points
having exactly $n_i$ pre-images in $E$ under $f$
and exactly $n_i$ pre-images in $F$
under $g$.

\subsubsection{}\label{sss-number-of-sections}
Let us fix $i$, and prove that $\Omega'_i$ is almost equal to $\Omega_i$. 
It is sufficient (since $f$ and $g$ play exactly the same role) to prove that the set $H$
of points of $\Omega_i$ having
exactly $n_i$ pre-images in $E$ under $f$ is almost equal to $\Omega_i$.

We argue by contradiction, so we assume that the set $H$ consisting of points $x\in \Omega_i$ such that  $f\inv(x)\cap E$
has cardinality different from $n_i$ has $t$-significant volume. Then its image $\overline H$ is a non-negligible subset of $\Theta_i$, which implies that $\overline H\cap \Theta'_i$ has dimension $n$. Let us choose a cube (with non-zero volume) $\mathcal C$ in $\overline H\cap \Theta'_i$ having the following property: there exist an integer $N$, a subset $I$ of $\{1,\ldots, N\}$ of cardinality $n_i$ and a $t$-bounded element $A>1$ in $^*\R$ such that each fiber of $h$ over $\mathcal C$ consists of exactly $N$ points $z_1<z_2<\ldots <z_N$ all contained in $[1-\overline A, \overline A-1]$ and  such that $z_j\in \overline E$ if and only if $j\in I$.

Let us choose a cube $\mathcal D\subset A_{\mathrm r}^n$ lifting $\mathcal C$. Since $\mathcal C\subset \overline H$, the intersection $\mathcal D\cap H$ is not $t$-negligible
(Lemma \ref{lem-almost-disjoint}), hence contains a cube $\mathcal D'$ with $t$-significant volume. Every point of $\mathcal D'$ has exactly $N$ $t$-bounded pre-images, all contained in $[-A,A]$; let $\sigma_1<\ldots <\sigma_N$ denote the corresponding continous sections of the étale map $f$ above $\mathcal D'$. If $x\in \mathcal D'$ and if $j\in \{1,\ldots, N\}\setminus I$ then $\sigma_j(x)\notin E$, because $\overline{\sigma_j(x)}\notin \overline E$ by the very definition of $I$. 
For each $j\in I$, set $\mathcal D'_j=\sigma_j\inv(E)$. Let $\xi\in \overline {\mathcal D'}$ and let $j\in I$. The point $\xi$ belongs to $\mathcal C$, so its $j$-th pre-image $\zeta$ under $h$ belongs to $\overline E$, so there exists a point $z\in E$ such that $\overline z=\zeta$, which implies that 
$\overline{f(z)}=\xi$; thus $z=\sigma_j(f(z))$ and $f(z)$ belongs to $\mathcal D'_j$; as a consequence, 
$\overline{\mathcal D'_j}=\overline{\mathcal D'}$. In view of \Prop \ref{prop-bar-almost}, it follows that 
$\overline{\bigcap_{j\in I}\mathcal D'_j}$ is almost equal to $\overline{\mathcal D'}$. In particular
$\bigcap_{j\in I}\mathcal D'_j$ is non-empty; but for every $x\in \bigcap_{j\in I}\mathcal D'_j$
the intersection $f\inv(x)\cap E$ has exactly $n_i$ elements, contradiction. 

\subsubsection{}\label{sss-negligible-integral}
Now we remark that if $\mathcal N$ is a $t$-negligible $t$-bounded definable subset of $(^*\R)^n$, then
\[\int_{E\cap f\inv(\mathcal N)} \phi \d f_1\wedge \ldots \wedge \d f_n\;\;\text{and}\;\;
\int_{F\cap g\inv(\mathcal N)} \psi \d g_1\wedge \ldots \wedge \d g_n\]
are $t$-negligible. 
Indeed, let $N$ be an integer such that the fibers of $f|_E$ 
and of $g|_F$ all have cardinality $\leq N$, and let $M$ be a $t$-bounded positive element such that $\abs \phi$ and $\abs \psi$ are bounded by $M$ on $E$ and $F$ respectively. 

Then
\[\left|\int_{E\cap f\inv(\mathcal N)} \phi \d f_1\wedge \ldots \wedge \d f_n\right|\leq NM\int_{\mathcal N}\d T_1\wedge \ldots \wedge \d T_n\]
and
\[\left|\int_{F\cap g\inv(\mathcal N)} \psi \d g_1\wedge \ldots \wedge \d g_n\right|\leq NM\int_{\mathcal N}\d T_1\wedge \ldots \wedge \d T_n,\]
whence our claim. 

\subsubsection{Conclusion}
In view of \ref{sss-number-of-sections} and \ref{sss-negligible-integral} it is sufficient to prove that for all $i$ the integrals
\[\int_{E\cap f\inv(\Omega'_i)} \phi \d f_1\wedge \ldots \wedge \d f_n\;\;\text{and}\;\;
\int_{F\cap g\inv(\Omega'_i)} \psi \d g_1\wedge \ldots \wedge \d g_n\]
agree up to a $t$-negligible element. So let us fix $i$. We denote by 
$\sigma_1<\sigma_2<\ldots <\sigma_{n_i}$ the continuous sections of $f|_E$
over $\Omega'_i$ and by $\tau_1<\tau_2<\ldots <\tau_{n_i}$ the continuous sections of $g|_F$
over $\Omega'_i$. For all $x\in \Omega'_i$ and all $j$ between $1$ and $n_i$ the elements $\overline{\sigma_j(x)}$ and $\overline{\tau_j(x)}$ coincide: both are the $j$-th pre-image of $\overline x$ in $\overline E\cap \overline F$. 
We have by construction 
\[\int_{E\cap f\inv(\Omega'_i)} \phi \d f_1\wedge \ldots \wedge \d f_n=\sum_j \int_{\Omega'_i}(\phi\circ \sigma_j) \d T_1\wedge \ldots \d T_n\]
and
\[\int_{F\cap g\inv(\Omega'_i)} \psi \d g_1\wedge \ldots \wedge \d g_n=\sum_j \int_{\Omega'_i}(\psi\circ \tau_j) \d T_1\wedge \ldots \d T_n.\]
The difference 
\[\int_{E\cap f\inv(\Omega'_i)} \phi \d f_1\wedge \ldots \wedge \d f_n-
\int_{F\cap g\inv(\Omega'_i)} \psi \d g_1\wedge \ldots \wedge \d g_n\]
is thus equal to 
\[\sum_j \int_{\Omega'_i}(\phi\circ \sigma_j-\psi \circ \tau_j)\d T_1\wedge \ldots \wedge \d T_n.\]
By our assumptions on $\psi$ and $\psi$ the difference $\abs{\phi\circ \sigma_j-\psi \circ \tau_j}$
is $t$-negligible for every $j$ at every point of $\Omega'_i$. Therefore there exists a positive $t$-negligible
element $\epsilon$ such that $\abs{\phi\circ \sigma_j-\psi \circ \tau_j)}\leq \epsilon$ for all $j$ at every point of $\Omega'_i$. 
As a consequence 
\begin{align*}
&\left|\int_{E\cap f\inv(\Omega'_i)} \phi \d f_1\wedge \ldots \wedge \d f_n-
\int_{F\cap g\inv(\Omega'_i)} \psi \d g_1\wedge \ldots \wedge \d g_n\right|\\
&\leq  n_i \epsilon \int_{\Omega'_i}\d T_1\wedge \ldots \wedge \d T_n,
\end{align*}
which ends the proof.
\end{proof}

\begin{coro}\label{coro-negligible-general}
Let $\mathscr X$
be a smooth
$A_{\mathrm r}$-scheme of finite type
of pure relative dimension $n$
Let $f=(f_1,\ldots, f_n) \colon \mathscr X\to
\mathbf A^n_{A_{\mathrm r}}$ be an étale map factorizing 
through an immersion $(f, f_{n+1}) \colon \mathscr X\hookrightarrow
\mathbf A^{n+1}_{A_{\mathrm r}}$. 
Let
$E$ be a $t$-bounded
semi-algebraic
subset of $\mathscr X(^*\R)$; 
we remind the reader that $\overline E$ denotes the
image of $E$ under the reduction map (and
not its topological closure).
The following are equivalent: 

\begin{enumerate}[i]
\item The image $f(E)$ is $t$-negligible. 
\item The image $f(\overline E)$ is of dimension $<n$.
\item The reduction $\overline E$ is of dimension $<n$. 
\item For every smooth function $\phi$ of the form $\phi_0\circ \lambda$ with $\phi_0$ a $\mathscr C^\infty$ function and $\lambda$ a tuple 
of regular functions on $\mathscr X$
such that $\phi_0$ is $(O,\lambda|_{\mathscr X_R})$-tame
on some semi-alegbraic open subset $O$ of $\mathscr X(R)$ containing
$\overline E$, the integral $\int_E \phi \d f_1\wedge \ldots \wedge \d f_n$
is $t$-negligible. 
\item The integral $\int_E \d f_1\wedge \ldots \wedge \d f_n$
is $t$-negligible. 
\end{enumerate}
\end{coro}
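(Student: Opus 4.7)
The plan is to prove the cycle (i)$\Rightarrow$(ii)$\Leftrightarrow$(iii)$\Rightarrow$(iv)$\Rightarrow$(v)$\Rightarrow$(i), so that all five conditions become equivalent. The equivalences among (i), (ii) and (iii) are dimension-theoretic and rest on Proposition \ref{prop-equiv-negli} together with the fact that étale maps preserve dimension. The implications (i)$\Rightarrow$(iv)$\Rightarrow$(v) rely on a straightforward upper bound on integrals, while the return arrow (v)$\Rightarrow$(i) is the delicate step, where étaleness must be used crucially in order to prevent any sign cancellation in the top-form integral.

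For (i)$\Leftrightarrow$(ii): since $E$ is $t$-bounded and $f$ is a tuple of regular functions on $\mathscr X$, the image $f(E)$ lies in $A_{\mathrm{r}}^n$, and functoriality of reduction gives $\overline{f(E)} = f(\overline E)$. Applying the equivalence (i)$\Leftrightarrow$(iv) of Proposition \ref{prop-equiv-negli} to $f(E)$ translates $t$-negligibility of $f(E)$ into the assertion that $f(\overline E)$ has dimension $<n$. For (ii)$\Leftrightarrow$(iii): the reduction of $f$ modulo $\mathfrak m_{\mathrm r}$ is an étale map of smooth $R$-schemes of pure dimension $n$, hence has zero-dimensional fibers, so $\dim \overline E = \dim f(\overline E)$.

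For (i)$\Rightarrow$(iv): the tameness hypothesis on $\phi_0$ combined with the definable compactness of the closure of $E$ provides a uniform $t$-bounded upper bound $|\phi|\leq M$ on $E$. The factorization of $f$ through the immersion $(f,f_{n+1})\colon \mathscr X\hookrightarrow \mathbf A^{n+1}_{A_{\mathrm{r}}}$ realizes every fiber of $f$ as the zero set in the $f_{n+1}$-line of a polynomial coming from the defining equations of $\mathscr X$, so fiber cardinalities on $E$ are bounded by a fixed integer $N$. Arguing exactly as in \ref{sss-negligible-integral} we obtain
$$\left|\int_E \phi \, df_1\wedge \ldots\wedge df_n\right| \leq NM \cdot \mathrm{Vol}(f(E)),$$
and the right-hand side is $t$-negligible by (i). The implication (iv)$\Rightarrow$(v) is trivial, taking $\phi=1$ (a constant is obviously tame).

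The hard step is (v)$\Rightarrow$(i). The idea is that étaleness makes $df_1\wedge \ldots\wedge df_n$ a nowhere vanishing top form on $\mathscr X$, which canonically orients $\mathscr X(^*\R)$ in such a way that $df_1\wedge \ldots\wedge df_n$ becomes a positive volume form. Decomposing $E$ into finitely many definable pieces on each of which $f|_E$ is a diffeomorphism onto its image (which is possible thanks to the zero-dimensional fibers and the separation provided by $f_{n+1}$), the change-of-variables formula then yields
$$\int_E df_1\wedge \ldots\wedge df_n = \int_{f(E)}\#\bigl(E\cap f^{-1}(x)\bigr)\, dx_1\wedge \ldots\wedge dx_n \geq \mathrm{Vol}(f(E)),$$
so $t$-negligibility of the integral on the left forces $t$-negligibility of $\mathrm{Vol}(f(E))$, which is (i). The main obstacle is precisely this orientation argument: without the étale structure (forcing every local sheet of $f$ to contribute with the same sign), sections of $f$ over $f(E)$ of opposite local orientations could cancel, and a $t$-significant $f(E)$ could a priori yield a $t$-negligible integral.
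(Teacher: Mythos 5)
Your proof is correct and closes the logical cycle, but it differs from the paper's in one interesting spot. The paper proves the linear chain (i)$\Rightarrow$(ii)$\Rightarrow$(iii)$\Rightarrow$(iv)$\Rightarrow$(v)$\Rightarrow$(i), and it obtains (iii)$\Rightarrow$(iv) as a literal application of Proposition \ref{prop-key-integration} with $\mathscr Y=\mathscr X$, $g=f$, $g_{n+1}=f_{n+1}$ and $F=\emptyset$ (so that ``$\overline E$ almost coincides with $\overline F$'' means exactly that $\overline E$ is negligible, and the conclusion says $\int_E\phi\,\d f$ differs from $\int_\emptyset=0$ by a $t$-negligible element). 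You instead go (i)$\Rightarrow$(iv) by the elementary estimate $\left|\int_E \phi\,\d f_1\wedge\ldots\wedge \d f_n\right|\leq NM\,\mathrm{Vol}(f(E))$, which is precisely the bound already carried out inside the proof of Proposition \ref{prop-key-integration} at step \ref{sss-negligible-integral}, and then quotes (i) to conclude; this is more self-contained, while the paper's route better reflects the intent of labeling the statement a corollary of Proposition \ref{prop-key-integration}. Both proofs of (v)$\Rightarrow$(i) are the same idea — no sign cancellation because $\d f_1\wedge\ldots\wedge\d f_n$ is nowhere vanishing — but the paper phrases it via the partition of $f(E)$ by fiber cardinality ($f(E)=\bigsqcup D_i$ and $\int_E\d f=\sum_i i\int_{D_i}\d T$, each summand nonnegative), whereas you phrase it via a fiber-counting pushforward integral. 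Two small remarks: you establish (ii)$\Leftrightarrow$(iii) including the backward direction, which the paper silently skips (it only uses the forward implications), and which rests on the standard o-minimal fact that a definable map with finite fibers preserves dimension; and your orientation remark should be read as ``choose the orientation making $\d f_1\wedge\ldots\wedge\d f_n$ positive,'' which is legitimate because $t$-negligibility of the integral is unaffected by the sign convention.
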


\begin{proof}
Implication (i)$\Rightarrow$(ii)
comes from the fact that $f(\overline E)=\overline{f(E)}$. Implication (ii)$\Rightarrow$(iii)
comes from étaleness of $f$. Implication (iii)$\Rightarrow$(iv) follows from 
Proposition \ref{prop-key-integration} (apply it with $\mathscr Y=\mathscr X, g=g, g_{n+1}=f_{n+1}$
and $F=\emptyset$). Implication (iv)$\Rightarrow$(v) is obvious. 
Assume that (v) holds. For every $i$, let $D_i$ denote the subset of $f(E)$ consisting of points having
exaclty $i$ pre-images on $E$, and let $N$ be such that $D_i=\emptyset$ for $i>N$. 
We then have
\[
\int_E  \d f_1\wedge \ldots \wedge \d f_n=\sum_{i=1}^N \int_{D_i}\d T_1\wedge \ldots \wedge \d T_n.\]
As a consequence, $\int_{D_i}\d T_1\wedge \ldots \wedge \d T_n$ is $t$-negligible for every $i$, 
so $\int_{f(E)} \d T_1\wedge \ldots \wedge \d T_n$ is $t$-negligible, whence (i).
\end{proof}

\subsection{}
Let us keep the notation of Corollary \ref{coro-negligible-general}
above. We shall say that $E$ is $t$-negligible if it satisfies the equivalent conditions (i)--(v)
(note that condition (iii) does not involve the functions $f_i$, so the notion of $t$-negligibility does
not depend on the choice of the functions $f_i$). 
We shall say that two $t$-bounded definable subsets of $\mathscr X(^*\R)$ almost coincide if their
symmetric difference is $t$-negligible, and that two definable subsets of $\mathscr X(R)$ almost coincide if their
symmetric difference is of dimension $<n$.

\begin{lemm}\label{lem-almost-disjoint2}
Let $\mathscr X$
be a smooth
$A_{\mathrm r}$-scheme of finite type
of pure relative dimension $n$. Assume that there exists 
an étale map $f=(f_1,\ldots, f_n) \colon \mathscr X\to
\mathbf A^n_{A_{\mathrm r}}$ factorizing 
through an immersion $(f, f_{n+1}) \colon \mathscr X\hookrightarrow
\mathbf A^{n+1}_{A_{\mathrm r}}$. Let $E$ and $F$ be two $t$-bounded
definable subsets of $\mathscr X(^*\R)$. Then $E$ and $F$ are almost disjoint if and only if
$\overline E$ and $\overline F$ are almost disjoint. 
\end{lemm}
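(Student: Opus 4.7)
The plan is to reduce the statement to its $(^*\R)^n$-counterpart (Proposition \ref{prop-bar-almost}(2)) via the étale chart $f$, using Corollary \ref{coro-negligible-general} as the dictionary between $t$-negligibility in $\mathscr X(^*\R)$ and dimension of reductions in $\mathscr X(R)$. By that corollary and the definitions, $E$ and $F$ are almost disjoint iff $\dim \overline{E\cap F}<n$, while $\overline E$ and $\overline F$ are almost disjoint iff $\dim(\overline E\cap \overline F)<n$. Since $\overline{E\cap F}\subset \overline E\cap \overline F$, the implication from the second to the first is immediate, and I focus on the contrapositive of the converse: assuming $\dim(\overline E\cap \overline F)=n$, I will exhibit a non-$t$-negligible subset of $E\cap F$.

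First I would build an étale chart where the reduction map has a well-defined section. By o-minimal cell decomposition together with the étaleness of $h:=f|_{\mathscr X_R}$ (which is locally injective in the semi-algebraic sense), one finds a definable open set $U\subset \mathscr X(R)$ contained in $\overline E\cap \overline F$ on which $h|_U$ is a homeomorphism onto an open $V=h(U)\subset R^n$; let $s:=(h|_U)^{-1}$. Then I pick a closed cube $\overline K=\prod[\overline{a_i},\overline{b_i}]\subset V$ with $a_i,b_i\in A_{\mathrm r}$ and positive edges, set $K=\prod[a_i,b_i]\subset A_{\mathrm r}^n$, and invoke the Henselian property of $A_{\mathrm r}$ (as in the proof of Proposition \ref{prop-key-integration}, which uses the embedding $(f,f_{n+1})$) to lift $s$ uniquely to a definable section $\tilde s\colon K\to \mathscr X(A_{\mathrm r})$ of $f$ with $\overline{\tilde s(w)}=s(\overline w)$ for every $w\in K$.

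Next I introduce $E_U^K:=\{e\in E\colon f(e)\in K,\ \overline e\in U\}$ and $F_U^K$ defined analogously; $f$ is injective on each. The key identity $f(E_U^K)\cap f(F_U^K)=f(E_U^K\cap F_U^K)$ holds: if $w$ lies in the left-hand side with lifts $e\in E_U^K$ and $e'\in F_U^K$, injectivity of $h|_U$ forces $\overline e=s(\overline w)=\overline{e'}$, so Hensel uniqueness gives $e=e'\in E\cap F$. To conclude that $f(E_U^K\cap F_U^K)$ has $t$-significant volume, I first show $\overline{f(E_U^K)}\supset \mathrm{int}(\overline K)$: for $v\in \mathrm{int}(\overline K)$, the point $s(v)\in U\subset \overline E$ admits some lift $e\in E$, and $\overline{f(e)}=v\in \mathrm{int}(\overline K)$ forces $f(e)\in K$, so $e\in E_U^K$ and $v\in \overline{f(E_U^K)}$; the same holds for $F_U^K$. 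Applying Proposition \ref{prop-bar-almost}(2) to $f(E_U^K)$ and $f(F_U^K)$ in $(^*\R)^n$ then gives that $\overline{f(E_U^K)\cap f(F_U^K)}$ almost equals $\overline{f(E_U^K)}\cap \overline{f(F_U^K)}\supset \mathrm{int}(\overline K)$, so the left-hand side has dimension $n$; thus $f(E_U^K\cap F_U^K)$ has $t$-significant volume by Proposition \ref{prop-equiv-negli}, and Corollary \ref{coro-negligible-general} then implies that $E_U^K\cap F_U^K\subset E\cap F$ is not $t$-negligible. The main obstacle to overcome is that a generic point of $\overline E\cap \overline F$ does not automatically lift to a point of $E\cap F$, since each reduction admits several $t$-bounded lifts in $\mathscr X(^*\R)$; the role of the étale chart $(U,V,\tilde s)$ is precisely to isolate, on a large subset, a single sheet on which lifts for $E$ and for $F$ must coincide.
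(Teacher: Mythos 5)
Your proof is correct, but it takes a genuinely different route from the paper's. Where the paper proves the nontrivial implication ($E\cap F$ $t$-negligible $\Rightarrow\overline E\cap\overline F$ of dimension $<n$) directly, by almost-partitioning $G=f(E\cup F)$ according to the fibre-count data $(i,j,k)$ of $\overline E$, $\overline F$, $\overline E\cup\overline F$ and lifting this almost partition to $(^*\R)^n$ (re-running the argument of \ref{sss-number-of-sections}), you argue by contraposition: assuming $\dim(\overline E\cap\overline F)=n$, you isolate a single-sheeted étale chart $(U,V,s)$ over an $n$-cube, identify the Hensel lift $\tilde s$, and show that the sets $E_U^K$, $F_U^K$ must meet in a non-$t$-negligible set by pushing everything to $(^*\R)^n$ and invoking Proposition \ref{prop-bar-almost}(2) and Proposition \ref{prop-equiv-negli}. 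Both approaches funnel the problem through the étale coordinates $f$ and the dictionary of Corollary \ref{coro-negligible-general}; yours is more local and self-contained, the paper's reuses its machinery of almost partitions and keeps the statement in the same form as Lemma \ref{lem-almost-disjoint}. One point you should make explicit: as written, $E_U^K=\{e\in E : f(e)\in K,\ \overline e\in U\}$ is defined via the reduction map, which is not an {\sc rcf}-definable operation, yet you then apply Proposition \ref{prop-bar-almost} and Corollary \ref{coro-negligible-general} to $f(E_U^K)$ and $E_U^K\cap F_U^K$, which requires definability. This is fine, but only because (possibly after shrinking $K$) the Hensel section $\tilde s$ is one of the finitely many ordered definable continuous sections of the étale map $f$ over $K$ inside a definable $f_{n+1}$-bounded range (exactly as in the paper's argument in \ref{sss-number-of-sections}), so that the condition ``$f(e)\in K$ and $\overline e\in U$'' is equivalent to the definable condition ``$e\in\tilde s(K)$''. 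You should state this identification $E_U^K=E\cap\tilde s(K)$ and note the definability of $\tilde s$, rather than leaving the reduction-map formulation on the page.
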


\begin{proof}
If $\dim (\overline  E\cap \overline F)<n$ then $\dim \overline {E\cap F}<n$ because  $\overline {E\cap F}\subset \overline E\cap\overline F$; thus
if
$\overline E$ and $\overline F$ are almost disjoint, so are $E$ and $F$. 
Assume now that $E$ and $F$ are almost disjoint. Set $G=f(E\cup F)$.
For every triple $(i,j,k)$, denote by $\overline G_{i,j,k}$ the subset of points of $\overline G$
having $i$ pre-images in $\overline
E$, $j$ in $\overline F$ and $k$ in $\overline E\cup \overline F$. 
By Corollary \ref{coro-lift-compact}
there exists for every $(i,j,k)$ a $t$-bounded definably compact definable subset
$\Gamma_{i,j,k}$ of $(^*\R)^n$ such that $\overline{\Gamma_{i,j,k}}$ is almost equal to 
the definable closure of $\overline G_{i,j,k}$, hence is also almost equal
to  $\overline G_{i,j,k}$. By the same reasoning as in \ref{sss-number-of-sections}, 
the subset of points of $\Gamma_{i,j,k}$ having exactly $i$ pre-images in $E$, \resp $j$ pre-images
in $F$, \resp $k$ pre-images in $E\cup F$ is almost equal to $\Gamma_{i,j,k}$; hence so is the intersection 
$\Gamma'_{i,j,k}$ of these three subsets. The family $(\Gamma'_{i,j,k})$ is an almost
partition of $G$. 

Let $(i,j,k)$ be a triple with $k<i+j$. Since $\overline E\cap \overline F$ has dimension $<n$, the set 
$\overline{G_{i,j,k}}$ is negligible; as a consequence, $\Gamma_{i,j,k}$ and $\Gamma'_{i,j,k}$ are $t$-negligible. 
This implies that $f(E\cap F)$ is $t$-negligible, whence the $t$-negligibility of $E\cap F$ itself. 
\end{proof} 

\begin{prop}\label{prop-bar-almost2}
Let $\mathscr X$ be as in Lemma \ref{lem-almost-disjoint2}
above and let $E$ and $F$ be two $t$-bounded definable subsets of $\mathscr X(^*\R)$. 

\begin{enumerate}[1]
\item The set $E$ is almost equal to $F$ if and only if $\overline E$ is almost equal to
$\overline F$. 
\item The set $\overline{E\cap F}$ is almost equal to $\overline E\cap \overline F$.
\end{enumerate}
\end{prop}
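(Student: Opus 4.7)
The plan is to mimic almost verbatim the argument given for Proposition \ref{prop-bar-almost}, replacing the appeals to Lemma \ref{lem-almost-disjoint} and to Proposition \ref{prop-equiv-negli} by appeals to their scheme-theoretic analogues Lemma \ref{lem-almost-disjoint2} and Corollary \ref{coro-negligible-general} (specifically the equivalence (i)$\Leftrightarrow$(iii) of the latter, which says that $t$-negligibility of a $t$-bounded definable subset of $\mathscr X(^*\R)$ matches dimension ${<}n$ of its reduction, hence ``negligibility'' in the sense defined just before Lemma \ref{lem-almost-disjoint2}).

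Concretely, I would set $P := E\setminus F$ and $Q := F\setminus E$, so that $P$, $Q$ and $E\cap F$ are pairwise disjoint and
\[
\overline E = \overline P \cup \overline{E\cap F}, \qquad \overline F = \overline Q \cup \overline{E\cap F}.
\]
Since $P$ and $Q$, $P$ and $E\cap F$, and $Q$ and $E\cap F$ are (in particular) pairwise almost disjoint, Lemma \ref{lem-almost-disjoint2} tells us that the corresponding reductions $\overline P$, $\overline Q$ and $\overline{E\cap F}$ are pairwise almost disjoint in $\mathscr X(R)$.

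For (1), from the above decompositions, $\overline E$ and $\overline F$ almost coincide if and only if the symmetric difference $\overline P \cup \overline Q$ is negligible, i.e.\ if and only if both $\overline P$ and $\overline Q$ are negligible; by Corollary \ref{coro-negligible-general} this is equivalent to the $t$-negligibility of $P$ and of $Q$, which is exactly the condition that $E$ and $F$ almost coincide. For (2), it suffices to note
\[
\overline E \cap \overline F = \overline{E\cap F}\cup (\overline P\cap \overline Q),
\]
and that $\overline P\cap \overline Q$ is negligible by Lemma \ref{lem-almost-disjoint2} applied to the disjoint (hence almost disjoint) sets $P$ and $Q$.

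There is no real obstacle: the entire content is packaged into Lemma \ref{lem-almost-disjoint2} and Corollary \ref{coro-negligible-general}, and the only thing to check is that the set-theoretic manipulations $\overline{A\cup B}=\overline A\cup \overline B$ (which holds tautologically for images of maps) fit together with almost-disjointness as in the original proposition. The only minor point worth verifying is that the étaleness/immersion hypotheses on $\mathscr X$ are indeed carried by both $E$ and $F$ (they are, since they are hypotheses on the ambient scheme, not on the subsets), so Lemma \ref{lem-almost-disjoint2} applies to every pair of $t$-bounded definable subsets we encounter.
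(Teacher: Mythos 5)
Your proof is correct and follows essentially the same route as the paper's: the paper's own proof of Proposition~\ref{prop-bar-almost2} simply states that it is the same as that of Proposition~\ref{prop-bar-almost}, with Lemma~\ref{lem-almost-disjoint2} substituted for Lemma~\ref{lem-almost-disjoint}, and your decomposition into $P=E\setminus F$, $Q=F\setminus E$ and $E\cap F$, together with the use of Corollary~\ref{coro-negligible-general} to pass between $t$-negligibility of $P,Q$ and negligibility of $\overline P,\overline Q$, is exactly that argument spelled out.
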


\begin{proof}
The proof is the same as that of Proposition \ref{prop-bar-almost}, except that one uses
Lemma \ref{lem-almost-disjoint2}
instead of Lemma \ref{lem-almost-disjoint}. 
\end{proof}

\begin{coro}\label{coro-lift-compact2}
Let $\mathscr X$ be as in Lemma \ref{lem-almost-disjoint2}
and let $K$ be a definable definably compact subset of $\mathscr X(R)$. There exists a definable, definably compact
and $t$-bounded subset $E$ of $\mathscr X(^*\R)$ such that $\overline E$ almost coincides with $K$.
\end{coro}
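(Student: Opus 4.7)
The plan is to generalize Corollary~\ref{coro-lift-compact} by using the immersion $(f,f_{n+1})$ to view both $K\subset \mathscr X(R)$ and the lift-to-be $E\subset \mathscr X(^*\R)$ inside affine $(n+1)$-space, and to invoke the Henselian property of $A_{\mathrm r}$ (as already used in the proof of Proposition~\ref{prop-key-integration}) to ensure the lifted points actually lie in $\mathscr X(^*\R)$.

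Assume for simplicity that $(f,f_{n+1})$ is a closed immersion, and pick polynomials $P_1,\ldots,P_s\in A_{\mathrm r}[T_1,\ldots,T_{n+1}]$ generating the ideal of $\mathscr X$ in $\mathbf A^{n+1}_{A_{\mathrm r}}$, so that $\mathscr X(^*\R)$ is cut out by $P_1=\cdots=P_s=0$ in $(^*\R)^{n+1}$ and $Z(R)$ similarly in $R^{n+1}$. Since $K$ is definably compact, I may choose $a_i<b_i$ in $A_{\mathrm r}$ with $K$ contained in the interior of $\prod_i[\overline a_i,\overline b_i]$; by the same application of Théorème~2.7.1 of \cite{bochnak-c-r1985} as in the proof of Corollary~\ref{coro-lift-compact}, I may then further assume that
\[K=\prod_i[\overline a_i,\overline b_i]\cap Z(R)\cap\bigcap_j\{f_j\geq 0\}\]
for finitely many polynomials $f_j\in R[T_1,\ldots,T_{n+1}]$, none of which vanishes identically on $Z(R)$. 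Lifting each $f_j$ to a polynomial $g_j\in A_{\mathrm r}[T_1,\ldots,T_{n+1}]$, I define
\[E:=\prod_i[a_i,b_i]\cap\mathscr X(^*\R)\cap\bigcap_j\{g_j\geq 0\}.\]
This $E$ is $t$-bounded (contained in $A_{\mathrm r}^{n+1}$), definable, and definably compact (closed and bounded in $\mathscr X(^*\R)$), and the inclusion $\overline E\subset K$ is immediate.

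For the reverse inclusion up to a negligible subset, I fix $z$ in the open locus of $K$ on which every $f_j$ is strictly positive; the complement of this locus in $K$ is contained in a finite union of zero loci of non-constant polynomials on $Z(R)$, hence has dimension $<n$ and is negligible. Choosing any lift $w\in A_{\mathrm r}^n$ of $f(z)\in R^n$, the Henselian property of $A_{\mathrm r}$ invoked in the proof of Proposition~\ref{prop-key-integration} produces a unique $\zeta\in \mathscr X(A_{\mathrm r})$ with $f(\zeta)=w$ and $\overline\zeta=z$. All strict inequalities defining $E$ then hold at $\zeta$ (being strict at $z$ and preserved upon reduction), so $\zeta\in E$ and $z\in\overline E$.

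The main obstacle is the Hensel lifting step, which is precisely where the étaleness of $f$ and smoothness of $\mathscr X$ over $A_{\mathrm r}$ enter the argument. Beyond that, the proof is essentially a scheme-theoretic repackaging of the proof of Corollary~\ref{coro-lift-compact}, with the equations $P_i=0$ defining $\mathscr X$ built into $E$ from the start so that $E\subset\mathscr X(^*\R)$ automatically.
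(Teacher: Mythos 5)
Your construction is essentially the paper's proof transported to the ambient affine space via the immersion $(f,f_{n+1})$: pick a bounding box, use Théorème~2.7.1 of \cite{bochnak-c-r1985} to describe $K$ by non-strict inequalities, lift the defining functions, and appeal to Hensel to show every strictly-interior point of $K$ is the reduction of a point of $E$. The Hensel step is correct and is the same one used in Proposition~\ref{prop-key-integration}.

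However there is a genuine gap in the claim that you "may assume" the $f_j$ do not vanish identically on $Z(R)$, and that the complement of $\bigcap_j\{f_j>0\}$ in $K$ is therefore negligible. This assumption is too weak when $Z(R)$ is disconnected. An $f_j$ can vanish identically on one Zariski-connected component $X_0$ of $Z(R)$ while being non-constant on another; such an $f_j$ is not eliminable (it is genuinely needed to cut out $K$ on the other components), yet its zero locus in $Z(R)$ contains all of $X_0$ and so is $n$-dimensional. If $K\cap X_0$ is $n$-dimensional, your negligibility claim fails. Worse, after lifting to $g_j$, the restriction $g_j|_{\mathscr X_0}$ reduces to $0$ and may well be everywhere strictly negative; in that case $E$ misses the entire lift of $K\cap X_0$, and $\overline E$ does not almost coincide with $K$. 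The paper's proof forestalls exactly this by first writing $K$ as the union of its intersections with the connected components of $\mathscr X_R$ and proving the claim one component at a time (which also lets it reduce to a single defining function $f$ via Proposition~\ref{prop-bar-almost2} and treat the case where $f$ is constant on that component separately). You need to reinstate this reduction to a single component, or otherwise argue that the $f_j$ may be chosen to be non-constant on \emph{every} component that meets $K$ in dimension $n$; without it, the step "hence has dimension $<n$ and is negligible" is not justified.
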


\begin{proof}
By writing $K$ as the union of its intersections with the Zariski-connected components of $\mathscr X_R$, 
we can assume that it lies on such a component $X$. 
By boundedness of $K$ and the henselian property of $A_{\mathrm r}$
(which ensures that any $R$-point of $\mathscr X$ can be lifted to an $A_{\mathrm r}$-point),
we can choose a $t$-bounded, definably compact definable subset $F$ of $\mathscr X(^*\R)$
such that $K\subset \overline F\subset X(R)$. 
By Theorem 2.7.1 of \cite{bochnak-c-r1985}, we can assume that there exists finitely many regular functions $f_1,\ldots, f_m$
on $\mathscr X_R$ such that $K$ is the intersection of $\overline F$
with the set of points $x$ such that $f_j(x)\geq 0$ for all $j$. By \Prop \ref{prop-bar-almost2}
above me may assume that $m=1$, and write $f$ instead of $f_1$. If $f$ is constant on $X$
the set $K$ is either empty or the whole of $\overline F$ and the statement is obvious. If $f$ is non-constant on $X$, let $g$ be a
regular function on $\mathscr X$ that lifts $f$. Let $E$ be the intersection of $F$ and the non-negative locus of $g$; it suffices to prove that $\overline E$ is almost equal to $K$. By definition, $\overline E\subset K$. Now let $x$ be a point on $K$ at which $f$ is positive, and let $\xi$ be any pre-image of $x$ on $F$. Since $f(x)>0$ we have $g(\xi)>0$, hence $\xi \in E$ and $x\in \overline E$. Thus the difference $K\setminus E$ is contained in the zero-locus of $f$ in $X(R)$ which is at most $(n-1)$-dimensional since $f|_X$ is non-constant. 
\end{proof}

\begin{defi}\label{def-x-liftable}
Let $X$ be a smooth $R$-scheme of finite type and of pure dimension $n$. We shall say for short that $X$ is \emph{liftable}
if there exists a smooth affine $A_{\mathrm r}$-scheme $\mathscr X$, an isomorphism $\mathscr X_R\simeq X$, and
$n+1$ regular functions $f_1,\ldots, f_{n+1}$ on $\mathscr X$ such that $(f_1,\ldots,f_{n+1})$ defines an
immersion $\mathscr X\hookrightarrow \A^{n+1}_{A_{\mathrm r}}$ and $(f_1,\ldots,f_n)\colon \mathscr X\to \A^n_{A_{\mathrm r}}$ is étale. 
\end{defi}

\subsection{Integral of a smooth $n$-form}\label{subsec-integ}
Let $X$ be a smooth $R$-scheme of finite type and of pure dimension $n$,
let $K$ be a definable subset of $X(R)$ with definably compact definable closure, and let $\omega$
be a smooth $n$-form on a semi-algebraic
open
neighborhood $O$ of $K$ in $X(R)$. 
The purpose of what follows is to define $\int_K \omega$. 

\subsubsection{}
We first assume that $X$ is liftable
and $\omega$ is of the form $\phi(u_1,\ldots, u_m)\omega_0$ almost everywhere on $K$, with  $u_i$  regular functions,  $\phi$ an 
$(O,(u_1,\ldots, u_m))$-tame smooth function, and where $\omega_0$ is an algebraic $n$-form. Choose $\mathscr X$ and
$f_1,\ldots, f_{n+1}$ as in Definition \ref{def-x-liftable}. The sheaf $\Omega_{X/R}$ is then free with basis
$(\d f_i|_X)_{1\leq i\leq n}$; therefore up to multiplying $\phi$ with a regular function we might assume
that $\omega_0=(\d f_1\wedge \ldots\wedge \d f_n)|_X$. 

Choose a $t$-bounded definable subset $E$ of $\mathscr X(^*\R)$ such that $\overline E$ is almost equal
to the definable closure of $K$ (Corollary \ref{coro-lift-compact2}) and for every $i$, choose a regular function
$v_i$ on $\mathscr X$ lifting $u_i$. 

By Proposition \ref{prop-key-integration}, the integral $\int_E \phi(v_1,\ldots, v_m)\d f_1\wedge \ldots \d f_n$
does not depend on our various choices up to a $t$-negligible element. We can thus set
\[\int_K\omega =\overline{\int_E \phi(v_1,\ldots, v_m)\d f_1\wedge \ldots \d f_n}\;;\]
this is an element of $R$. 
Note that if $K'$ is any definable subset almost equal to $K$ then $\int_{K'}\omega=\int_K \omega$
(since the same $E$ can be used for both computations).

The assignment $K\mapsto \int_K \omega$ is finitely additive. Indeed, if $K$ is a finite union $\bigcup_{j\in J} K_j$
of definable subsets, we can choose for every $j$ an almost lifting $E_j$ of $K_j$; now for every subset $I$ of $J$
the sets $\overline{\bigcap_{j \in I}E_j}$ and $\bigcap_{j\in I}K_j$ almost coincide by Proposition \ref{prop-bar-almost2}, 
and additivity follows from additivity of integrals over the field $^*\R$. 

\subsubsection{}
We still assume that $X$ is liftable, but we no longer
assume that $\omega$ is of the form $\phi(u_1,\ldots, u_m)\omega_0$ on $K$.
By the very definition of an $n$-form there exist finitely many definably open subsets $U_1,\ldots, U_r$ of $X(R)$ that cover $K$
and such that $\omega|_{U_i}$ has the required form.
By Lemma \ref{lem-compact-open-cover} we can write the definable closure of $K$ as a finite union $\bigcup_{j\in J} K_j$ with each $K_j$ definably compact and contained in $U_j$. 
By additivity $\sum_{\emptyset \neq I\subset J}(-1)^{\abs I+1}\int_{\bigcap_{j\in I}K_j}\omega$
does not depend on the choice of the sets $U_j$ and  $K_j$, and we can use this formula as a definition for $\int_K \omega$.
The assignment $K\mapsto \int_K \omega$ remains additive in this more general setting, and $\int_K\omega$ only depends on the class
of $K$ modulo almost equality.

\subsubsection{}\label{sss-x-ds-integrals}
We still assume that $X$ is liftable. Let $s$ be an algebraic function on $X$, set $X'=D(s)$
(the invertibility locus of $s$)
and assume that
the definable closure of $K$ is contained in $X'(R)$. We then have a priori two different definitions for $\int_K\omega$, the one using
$X$ and the other one using  the principal open subset $X'$, which is also (obviously) liftable. Let us show that both integrals coincide. 
By replacing $K$ by its closure (to which it is almost equal) we can assume that it is definably compact. 

By cutting $K$ into finitely many
sufficiently small pieces
(Lemma \ref{lem-compact-open-cover}) and using additivity, we can assume that $\omega$ is of the form $\phi(u_1,\ldots, u_m)\omega_0$ almost everywhere on $K$,
with $u_i$ regular functions on $X$, $\phi$ an 
$(O,(u_1,\ldots, u_m))$-tame smooth function,  and $\omega_0$ a section of $\Omega^n_{X/R}$ (this can be achieved since $\Omega_{X/R}$ is free because $X$ is liftable). Lift every $u_i$ to a regular function $v_i$ on $\mathscr X$, and lift $\omega_0$ to a relative $n$-form $\omega'$ on $\mathscr X$. 

Let us choose data $(\mathscr X, f_1,\ldots, f_{n+1})$ that witness the liftability of $X$.
Lift every $u_i$ to a regular function $v_i$ on $\mathscr X$, lift $\omega_0$ to a relative $n$-form $\omega'$ on $\mathscr X$, and lift
$s$ to a regular function 
$\sigma$ on $\mathscr X$.
Set $\mathscr X'=D(\sigma)$. Then $(\mathscr X', f_1,\ldots, f_n, f_{n+1})$
witnesses the liftability of $\mathscr X'$. Now choose a $t$-bounded definable subset of $\mathscr X'(^*\R)$
that almost lifts $K$. Then it is definable, $t$-bounded and an almost lifting of $K$ as a subset of $\mathscr X(^*\R)$ as well. 
Therefore the $X$ and the $X'$ version of $\int_K\omega$ both are equal to the class of $\int_E \phi(v_1,\ldots, v_m)
\omega'$ modulo the $t$-negligible elements. 

\subsubsection{}
The scheme $X$ is no longer assumed to be liftable, but we assume that there exist two liftable affine open subsets $X'$ and $X''$ of $X$ such that the definable closure of $K$ is contained in $X'(R)\cap X''(R)$. We then have a priori two different definitions for $\int_K\omega$, the one using
$X'$ and the other one using $X''$. We want to prove that they coincide. By replacing $K$ by its closure (to which it is almost equal) we can assume that it is definably compact.

Let us first note the following. Let $x$ be a point of $X'\cap X''$. Choose an affine neighborhhood $Y$
of $x$ in $X'\cap X''$ equal to $D(s)$ as a subset of $X'$, for some regular function $s$ on $X'$. Now choose an affine neighborhood 
$Z$ of $x$ in $Y$ equal to $D(\FL{w})$ as a subset of $X''$, for some regular function $\FL{w}$ on $X''$. The restriction of $\FL{w}$ to   
$Y$ is equal to $a/s^\ell$ for some $\ell\geq 0$ and some regular function $a$ on $X'$; as a consequence $Z=D(as)$
as a subset of $X''$. 

Hence we can cover $X'\cap X''$ by finitely many
open subschemes $Y_1,\ldots, Y_r$, each of which is principal in both $X'$
and $X''$. Now write $K=\bigcup K_i$ with every $K_i$ definable, definably compact and contained in $Y_i$ (Lemma \ref{lem-compact-open-cover}). For every non-empty subset $I$ of $\{1,\ldots, r\}$ it follows
from \ref{sss-x-ds-integrals} that $\int_{\bigcap_{i\in I}K_i}\omega$ does not
depend whether one is working with $X'$ or or $X''$
(because it can be computed working with $Y_j$ where $j$ is any element of $I$). 
By additivity it follows that $\int_K\omega$ also does not
depend whether one is working with $X'$ or or $X''$. 

\subsubsection{}
Now let us explain how to define $\int_K\omega$ in general. Let $K'$ be the closure of $K$, which is definably compact. We choose a finite cover $(X_i)_{i\in I}$ of $X$ by liftable open subschemes (which is possible since $X$ is smooth). We then  write $K'$ as a finite union $\bigcup K_i$ where every $K_i$
is a definably compact semi-algebraic
subset of $X_i(R)$ (Lemma \ref{lem-compact-open-cover}). 

We then set 
\[\int_K \omega=\sum_{\emptyset\neq J \subset I}
(-1)^{\abs J+1}\int_{\bigcap_{i\in J}K_i}\omega,\]
which makes sense because,
as it follows straightforwardly
from the above,
it
does not depend on $(X_i)$ nor on $(K_i)$.

\subsection{}
Let $X$ be a smooth $R$-scheme of finite type of pure dimension $n$. 
It follows from our construction that
\[(K,\omega)\mapsto \int_K \omega\]
(where $K$ is a semi-algebraic
subset of $X(R)$ with definably
compact closure and $\omega$ is an $n$-form defined
on a
definable neighborhood of $K$) is $\R$-linear in $\omega$, additive in $K$, 
and that it depends on $K$ only up to almost equality.

We can extend this definition to forms with coefficients in a reasonable class of functions (like piecewise smooth) by requiring everywhere
in the above that $\phi$ belongs to the involved class (instead of being smooth) and 
satisfies some tameness condition. For instance, $\int_K \abs \omega$ makes sense (and is non-negative), see \ref{sss-def-absomega}. It follows from the definition that $\int_K\omega$ only depends on $\omega|_K$; in particular, it is zero if $\omega$ vanishes almost everywhere on $K$. We can thus extend the definition of $\int_K\omega$ when we only assume that there exists a definable subset $K'$ of $K$ with definably compact closure such that $\omega$ vanishes on $K\setminus K'$. 

And of course, we can also define  by linearity the integrals of complex-valued forms~(\ref{smoothfunc}). 

\subsection{The complex case}
We now consider a smooth $C$-scheme of finite type $X$ of pure dimension $n$, and a complex-valued $(n,n)$-form $\omega$ with coefficients belonging to a reasonable class of functions defined
in a semi-algebraic open
neighborhood of a
semi-algebraic
subset $K$ of $X(C)$. Assume that there exists a semi-algebraic subset $K'$ of $K$ with definably compact closure such that $\omega$ vanishes on $K\setminus K'$. 

Then $\int_K \omega$ 
is well defined. Its computation requires (amongst other things) to lift locally $\mathrm R_{C/R}X$ to a smooth $A_{\mathrm r}$-scheme
and $\omega$ to a $(2n)$-form on this scheme, 
which can be achieved by lifting locally $X$ to a smooth $A$-scheme and $\omega$ to an $(n,n)$-form on this scheme. 

\section{The archimedean and non-archimedean complexes of forms}

\subsection{}
We denote by $\lambda$ the element $-\log \abs t$ of $R_{>0}$, 
and by $\loga$ the normalized logarithm
function $a\mapsto \log a /\lambda$ from $R_{>0}$ to $R$. 

We recall that $C$ is equipped with a non-archimedean absolute value $\val\cdot$, which 
sends a non-zero element $z$ to $\tau^{\std(\frac {\log \abs z}{\log \abs t})}$ where $\tau$ is an element of $(0,1)$ which has been fixed once and for all,
and where $\std(\cdot)$ denotes
the standard part
(see \ref{ss-field-c}). We set $\lambda_\flat=-\log \tau =-\log \val t\in \R_{>0}$. 
 and we denote by $\logb$ the
 normalized logarithm
 function $a\mapsto \log a /\lambda_\flat$ from $\R_{>0}$ to $\R$. 
 
 If $a$ is any element of $C^\times$, it follows from the definitions that 
 
 \[\logb \val a=\std(\loga \abs a).\] 

\subsection{Analytification of $C$-schemes}
\label{aff-neighb-weierstr}
The field $C$ is a complete
non-archimedean field, so 
Berkovich geometry
makes sense
over it. 

Let $X$ be a $C$-scheme of finite type, and let $X\an$
denote
its Berkovich analytification.
Let $x$ be a point of $X\an$. In the proof of our
main theorem, we shall use the fact that 
$x$ has a basis of open, \resp affinoid, neighborhoods $V$
in $X\an$  satisfying the following: there exists
an affine open subscheme $\Omega$ of $X$ such that $V$ is an open subset, \resp a Weierstraß~ domain, of $\Omega\an$ that admits a description
by a system of inequalites of the form 
\[\val {\phi_1}<R_1,\ldots, \val{\phi_n}<R_n, \text{\resp}\;
\val {\phi_1}\leq R_1,\ldots, \val{\phi_n}\leq R_n\]
where the functions $\phi_i$ belong to $\mathscr O(\Omega)$,
and with $R_i$ positive real numbers. 

Let us prove it. We first chose
an open affine subscheme $U$ of $X$ with $x\in U\an$, a family $(f_1,\ldots, f_n)$ of regular functions
on $U$ that generate $\mathscr O(U)$ over $C$, and let $R$ be a positive real number such that $\val{f_i(x)}<R$ for all $i$; let $W$
be the Weierstraß~domain of $U\an$ defined by the inequalities $\val{f_i}\leq R$.
Now it follows from the general theory of Berkovich
spaces that $x$ has a basis of open, \resp affinoid,
neighborhoods described by a system of inequalities of the form
\[\val {f_1}< R, \ldots, \val{f_n}<R, \val{g_1}< r_1,\ldots, \val{g_m}< r_m,\val{h_1}>s_1,\ldots, \val{h_\ell}> s_\ell,\]
\[\text{\resp}\; \val {f_1}\leq  R, \ldots, \val{f_n}\leq R, \val{g_1}\leq  r_1,\ldots, \val{g_m}\leq r_m,
\val{h_1}\geq s_1,\ldots, \val{h_\ell}\geq s_\ell\]
with $g_i$ and $h_i$ analytic functions on $W$, and  $s_i$ and $r_i$ positive real numbers. But
$\mathscr O(U)$ is dense in $\mathscr O(W)$, so we can assume by approximation that the functions $g_i$ and  $h_i$ belong
to $\mathscr O(U)$. Then the domain described by the above system of inequalities
can also be described as the locus of validity of
 \[\val {f_1}<R, \ldots, \val{f_n}<R, \val{g_1}< r_1,\ldots, \val{g_m}<r_m,\val{h_1\inv}  
 <s_1\inv ,\ldots, \val{h_\ell\inv}< s_\ell\inv, \]
 \[\text{\resp}\;\;\val {f_1}\leq R, \ldots, \val{f_n}\leq R, \val{g_1}\leq r_1,\ldots, \val{g_m}\leq r_m,\val{h_1\inv}
 \leq s_1\inv ,\ldots, \val{h_\ell\inv}\leq s_\ell\inv \]
 on $D(h_1h_2\ldots h_\ell)\an$, whence our claim.

\subsection{Two complexes of differential forms}
We fix a smooth $C$-scheme of finite type $X$ of pure dimension $n$. 

\subsubsection{}
Let us begin with some notation. Let $U$ be an open subscheme
of $X$ and let $f=(f_1,\ldots, f_m)$ be a family of regular functions on $U$. Let $I$ and $J$ be two subsets of $\{1,\ldots, m\}$. 
We shall denote by $\mathscr S^{I,J,(f_i)}$ the set of pairs
$(V,\phi)$ where:

\begin{enumerate}[a]

\item $V$ is an open subset of $(\R\cup\{-\infty\})^m$, defined by $\Q$-linear inequalities such that $V_R$
contains $(\loga\abs{f_1}, \ldots,\loga  \abs{f_m})(U(C))$;
\item $\phi$ is a reasonably smooth
function on $V$ which is $(I\cup J)$-vanishing.
\end{enumerate}
We identify two pairs $(V,\phi)$ and $(V',\phi')$ if $\phi$ and $\phi'$ agree on $V\cap V'$; therefore
we shall most of the time omit to mention $V$ and elements
of $\mathscr S^{I,J,(f_i)}$ 
will be called $(I\cup J))$-vanishing
reasonably smooth functions. 

We denote by $\mathscr S^{I,J,(f_i)}_\flat$ the set of pairs 
$(V,\phi)$ satisfying condition

\begin{itemize}
\item[$(\mathrm a_\flat)$]
$V$ is an open subset of $(\R\cup\{-\infty\})^m$, defined by $\Q$-linear inequalities and containing
$(\logb\val{f_1}, \ldots,\logb  \val{f_m})(U\an)$
\end{itemize}
and condition (b) above. 
Here also, we  identify two pairs $(V,\phi)$ and $(V',\phi')$ if $\phi$ and $\phi'$ agree on $V\cap V'$ 
and
elements
of $\mathscr S^{I,J,(f_i)}_\flat$ 
will be called $(I,J)_\flat$-vanishing smooth functions.

Note that $\mathscr S^{I,J,(f_i)}
\subset \mathscr S^{I,J,(f_i)}_\flat$.

\subsubsection{The non-standard archimedean complex}\label{nsac}
Let $U$ be a Zariski open subset of $X(C)$. Let us denote by $\mathsf A^{p,q}_{\mathrm{presh}}(U)$ the set of those $(p+q)$-smooth forms
$\omega$ on $U(C)$ for which there exist: 

\begin{itemize}[label=$\bullet$]
\item a finite family $(f_1,\ldots, f_m)$ of regular functions on  $U$ ;  
\item for every pair $(I,J)$ with $I$ and $J$ two subsets of $\{1,\ldots, m\}$ of respective cardinality $p$ and $q$, 
an $(I\cup J)$-vanishing 
reasonably smooth function $\phi_{I,J}\in \mathscr S^{I,J,(f_i)},$ 
\end{itemize}
such that
\[\omega=\sum_{I,J} \phi_{I,J}\left(\loga \abs{f_1},\ldots, \loga \abs{f_m}\right)
\dl\abs{f_I}\wedge \dArg{f_J}\]
where $\dl\abs f_I$ stands for
$\dl \abs{f_{i_1}}\wedge\ldots\wedge\dl \abs{f_{i_p}}$ if $i_1<i_2<\ldots<i_p$ are the elements
of $I$, and \FL{$\dArg{f_J}$ stands for
$\dab{f_{j_1}}\wedge\ldots\wedge\dab{f_{j_q}}$ if $j_1<j_2<\ldots<j_q$ are the elements
of $J$}. 

We denote by $\mathsf A^{p,q}$ the sheaf on $X^{\mathrm{Zar}}$ associated to the presheaf $\mathsf A^{p,q}_{\mathrm{presh}}$, and
by $\mathsf A^{\bullet, \bullet}$ the direct sum $\bigoplus_{p,q} \mathsf A^{p,q}$. 

We set for short $\mathsf A^0=\mathsf A^{0,0}$. By construction, 
$\mathsf A^0(X)$ is the subsheaf (of $C$-algebras)
of the push-forward of $C\otimes_R \mathscr C^\infty_X$
to $X^{\mathrm{Zar}}$, whose sections are the smooth functions
that are locally on $X^{\mathrm{Zar}}$ of the form 
$\phi(\loga \abs{f_1},\ldots, \loga \abs{f_m})$ for some
finite family $(f_1,\ldots, f_m)$ of regular functions
and some reasonably smooth
function 
$\phi$ on a suitable
open subset of $(\R\cup\{-\infty\})^m$. 

The sheaf $\mathsf A^{\bullet, \bullet}$ has a natural structure of bi-graded $\mathsf A^0$-algebra; it follows
from \ref{sss-pseudo-der}
that
the differentials $\d$ and
$\ds$ induce two differentials on $\mathsf A^{\bullet, \bullet}$, which are still denoted by $\d$ and $\ds$. The differential $\d$ is
of bidegree $(1,0)$ and maps a 
form 
\[\phi\left(\loga \abs{f_1}, \ldots, \loga \abs{f_m}\right)
\dl\abs{f_I}\wedge \dArg{f_J}\]
to 
\[\sum_{1\leq i\leq m}\frac{\partial \phi}{\partial x_i}\left(\loga \abs{f_1},\ldots,
\loga \abs{f_m}\right)\dl \abs{f_i}\wedge \dl \abs{f_I}\wedge \dArg{f_J}.\]

The differential $\ds$ is
of bidegree $(0,1)$ and maps a 
form 
\[\phi\left(\loga \abs{f_1},\ldots, \loga \abs{f_m}\right)
\dl\abs{f_I}\wedge \dArg{f_J}\]
to 
\[\sum_{1\leq i\leq m}\frac{\partial \phi}{\partial x_i}\left(\loga \abs{f_1}, \ldots,
\loga \abs{f_m}\right)\daa{f_i}\wedge \dl \abs{f_I}\wedge \dArg{f_J}.\]

The operator $\mathrm J$ also acts on $\mathsf A^{\bullet, \bullet}$; it maps a form 
\[\phi\left(\loga \abs{f_1},\ldots,\loga \abs{f_m}\right)
\dl\abs{f_I}\wedge \dArg{f_J}\]
to
 \[(-1)^q
 (2\pi)^{p-q}\phi\left(\loga \abs{f_1},\ldots, \loga \abs{f_m}\right)
 \dArg{f_I}\wedge \dl\abs{ f_J}\]
and acts trivially on $\mathsf A^{n,n}$.

\subsubsection{The non-archimedean complex}\label{nac}
Let $U$ be a Zariski-open subset of
$X$.
Let us denote by $\mathsf B^{p,q}_{\mathrm{presh}}(U)$ the set of those
$(p,q)$-smooth forms 
$\omega$ on $U\an$ in the sense of \cite{chambertloir-d2012} for which there exist:
\begin{itemize}[label=$\bullet$]
\item a finite family $(f_1,\ldots, f_m)$ of regular functions on  $U$;
\item for every pair $(I,J)$ with $I$ and $J$ two subsets of $\{1,\ldots, m\}$ of respective cardinality $p$ and $q$, 
an $(I,J)_\flat$-vanishing
reasonably smooth function $\phi_{I,J}
\in \mathscr S^{I,J,(f_i)}_\flat$,
\end{itemize}
such that

\[\omega=\sum_{I,J} \phi_{I,J}\left(\logb \val{f_1},\ldots, \logb \val {f_m}\right)
\di \logb\val{f_I}\wedge \dc \log \val{f_J}\]
where $\di \logb \val{f_I}$ stands for
$\di \logb \val{f_{i_1}}\wedge\ldots\wedge\di \logb \val{f_{i_p}}$ if $i_1<i_2<\ldots<i_p$ are the elements
of $I$, and similarly for $\dc \log \val{f_J}$. 

We denote by $\mathsf B^{p,q}$ the sheaf on $X^{\mathrm{Zar}}$ associated to the presheaf $\mathsf B^{p,q}_{\mathrm{presh}}$. We denote
by $\mathsf B^{\bullet, \bullet}$ the 
direct sum $\bigoplus_{p,q} \mathsf B^{p,q}$. 
We set for short $\mathsf B^0=\mathsf B^{0,0}$. By construction, 
$\mathsf B^0$ is
the subsheaf (of $C$-algebras)
of the push-forward of $C\otimes_R A^0_{X\an}$
to $X^{\mathrm{Zar}}$, whose sections are the smooth functions
that are locally on $X^{\mathrm{Zar}}$ of the form 
$\phi(\logb \val{f_1},\ldots, \logb \val{f_m})$ for some
finite family $(f_1,\ldots, f_m)$ of regular functions
and some reasonably smooth
function 
$\phi$ on a suitable
open subset of $(\R\cup\{-\infty\})^m$.

The sheaf $\mathsf B^{\bullet, \bullet}$ is a bi-graded $\mathsf B^0$-algebra which is stable under $\di$ and $\dc$. 

\begin{rema}\label{rem-difference-withCLD}
Every $(p,q)$-form in the sense of
\cite{chambertloir-d2012}
can be written locally \emph{for the Berkovich topology}
 as a sum
\[\sum \psi_{I,J}(\log \val{g_i},\ldots, \log \val{g_m})
\di \log \val{g_I}\wedge \dc \log \val{g_J}\]
where the $\psi_{I,J}$ are smooth and with
$g_i$ \emph{invertible}
analytic functions. 

By the very definition of an $(I\cup J)$-vanishing reasonably
smooth function, a section 
\[\omega=\sum_{I,J} \phi_{I,J}\left(\logb \val{f_1},\ldots, \logb \val {f_m}\right)
\di \logb\val{f_I}\wedge \dc \log \val{f_J}\]
of $\mathsf B^{p,q}_{\mathrm{presh}}$
fulfills this condition, because locally for the Berkovich topology, every non-zero term of the sum
can be rewritten by involving only the functions $f_i$ which are invertible.
But the reader should be aware that $\omega$
can not in general be written
locally \emph{for the Zariski topology}
as a sum
\[\sum \psi_{I,J}(\log \val{g_i},\ldots, \log \val{g_m})
\di \log \val{g_I}\wedge \dc \log \val{g_J}\]
with $g_i$ invertible algebraic functions. 

(Consider for example
a non-zero smooth function $\phi$ on $\R$
that vanishes on $(-\infty, A)$ for some $A$,  and the section $\phi(\logb \val T)
\di \logb \val T \wedge \dc\log \val T$
of $\mathsf B^{1,1}$ on $\mathbf A^{1,\mathrm{an}}_C$.)
\end{rema}

\section{Pseudo-polyhedra}

The purpose of this section is to describe the domains on which we shall integrate our forms, in both the archimedean
and non-archimedean settings. These domains will be the preimages under functions of the form $\loga \abs f$ (\resp $\logb \val f$) of some
specific subsets of $(R\cup\{-\infty\})^n$ (\resp $(\R\cup\{-\infty\})^n$) that we  call \emph{pseudo-polyhedra}.

\begin{defi}\label{def-pseudo-polyhedron}
Let $S$ be a non-trivial divisible ordered abelian group
with additive
notation
(in practice we shall consider only cases where $S$ underlies a real-closed field).
A subset of $(S\cup\{-\infty\})^m$ 
is called a \emph{pseudo-polyhedron}
if it is a finite union of sets of the form

\[\left\{(x',x'')\in \prod_{i\in I}[-\infty,b_i]\times \prod_{i\in J}[a_i,b_i]
\;\text{s.t.}\;\phi_1(x'')\leq 0,\ldots, \phi_r(x'')\leq 0\right\}\]
where
\begin{itemize}[label=$\diamond$]
\item $I$ and $J$  are  subsets of $\{1,\ldots, m\}$ 
that partition it; 
\item  for $1 \leq i \leq m$, $a_i$ and  $b_i$ are elements of $S$; 
\item for $1 \leq j \leq r$, $\phi_j$ is an affine form whose linear part has
coefficients in $\Q$.
\end{itemize}

A subset of $S^m$ is a \emph{polyhedron}
if this is a pseudo-polyhedron of $(S\cup\{-\infty\})^m$. This amounts
to requiring that it be a finite union of sets of the form
\[\left\{x\in \prod_{i\in \{1,\ldots, n\}}[a_i,b_i]
\;\text{s.t.}\;\phi_1(x)\leq 0,\ldots, \phi_r(x)\leq 0\right\},\]
with  $a_i$, $b_i$ and  $\phi_i$ as above. 
\end{defi}

\subsubsection{}
Let $X$ be an analytic space over $C$ and let $f_1,\ldots, f_m$ be analytic functions on $X$. 
Let $P$ be a pseudo-polyhedron of $(\R\cup\{-\infty\})^m$. The set
\[(\logb \val {f_1}, \ldots, \logb \val{f_m})\inv(P)\] is a closed analytic domain of $X$.

\subsubsection{}
\label{sss-tp-definable}
Let $P$ be a pseudo-polyhedron of
$(R\cup\{-\infty\})^m$. The subset 
\[\abs t^{-P}:=\{\abs t^{-x}, x\in P\}\]
(with the convention that  $\abs t^\infty=0$)
is an {\sc rcf}-definable subset of $R_{\geq 0}^m$; indeed, 
it is defined by monomial inequalities. One sees easily that if
$P$ depends
{\sc doag}-definably on some set of parameters 
$a_1,\ldots, a_\ell\in R$ then $\abs t^{-P}$ depends {\sc rcf}-definably 
on $\abs t^{a_1},\ldots, \abs t^{a_\ell}$.

\subsubsection{}
In practice, we shall encounter pseudo-polyhedra 
over the real closed fields $\R$ and $R$. 

\paragraph{}
Let $P\subset (\R\cup\{-\infty\})^m$ be a pseudo-polyhedron over $\R$. It gives rise by
base-change to a pseudo-polyhedron over $P_R\subset (R
\cup\{-\infty\})^m$ over the field $R$ which has the following properties: it can be written as a finite union of subsets of 
$(R
\cup\{-\infty\})^m$ admitting a description like in definition 
\ref{def-pseudo-polyhedron} with the additional requirement that 
all the elements  $a_i$ and $b_i$ are bounded; we shall say for short
that such 
a pseudo-polyhedron is bounded. 

\paragraph{}
Let $\Pi$ be a bounded pseudo-polyhedron in $(R\cup\{-\infty\})^m$.
For every $x$ in $R\cup\{-\infty\}$ which is either
negative unbounded or equal to $-\infty$
we set $\std(x)=-\infty$; with this convention, 
the definition 
\[\std(\Pi):=\{(\std(x_1),\ldots, \std(x_m))\}_{(x_1,\ldots, x_m)
\in \Pi}\]
makes sense, and $\std(\Pi)$ 
is a pseudo-polyhedron of $(\R\cup\{-\infty\})^m$.

To see this, we can assume that $\Pi$ is of the form
\[\left\{(x',x'')\in \prod_{i\in I}[-\infty,b_i]\times \prod_{i\in J}[a_i,b_i]
\;\text{s.t.}\;\phi_1(x'')\leq 0,\ldots, \phi_r(x'')\leq 0\right\}\]
where the notation is as in Definition \ref{def-pseudo-polyhedron} 
and where the elements $a_i$ and  $b_i$ are all bounded. 
Set \[\Theta=\left\{x\in \prod_{i\in J}[a_i,b_i]
\;\text{s.t.}\;\phi_1(x)\leq 0,\ldots, \phi_r(x)\leq 0\right\}.\]
This is a bounded polyhedron of $R^J$ and 
one 
has \[\std(\Pi)=\left(\prod_{i\in I}[-\infty, \std(b_i)]\right)\times \std(\Theta).\]
So it suffices to prove that $\std(\Theta)$ is a polyhedron. Otherwise said, 
we can assume that $I=\emptyset$ and $J=\{1,\ldots, m\}$ and it suffices to show
that $\std(\Pi)$ is a polyhedron. 

In fact we shall prove more generally that $\std(\Pi)$ is a polyhedron when $\Pi$ is any bounded {\sc doag}-definable 
subset of $R^m$. We use induction on $m$; there is nothing to prove if $m=0$. Assume now that $m>0$ and that the 
result holds for integers $<m$. By cell decomposition for an  o-minimal theory, we can assume that $\Pi$ is an open cell. 
So there exists an open {\sc doag}-definable subset $\Delta$ of $R^{m-1}$ and two {\sc doag}-definable functions $\lambda$ 
and $\mu$ from $\Delta$ to $R$ such that $\lambda<\mu$ on $\Delta$ and 
$\Pi$ is equal to the set of those $m$-uples $(x_1,\ldots, x_m)\in R^m$
such
that 
\[(x_1,\ldots, x_{m-1})\in \Delta
\;\text{and}\;
\lambda(x_1,\ldots, x_{m-1})<x_m<\mu(x_1,\ldots, x_{m-1})\}.\]
Up to refining the original cell decomposition, we can even assume that $\lambda$ and $\mu$ are affine with their linear parts
having coefficients in $\Q$. 

Since the cell $\Pi$ is bounded, its projection $\Delta$ onto $R^{m-1}$ is bounded as well, and the constant terms of both $\lambda$ 
and $\mu$ are bounded too, thus the standard parts $\std(\lambda)$ and $\std (\mu)$ make sense as affine functions from $\R^{m-1}\to \R$, 
with linear parts having coefficients in $\Q$. 

Now a direct computation shows that $\std(\Pi)$  is equal to the set of those $m$-uples $(x_1,\ldots, x_m)\in \R^m$
such
that 
\[(x_1,\ldots, x_{m-1})\in \std(\Delta)
\;\text{and}\;
\std(\lambda)(x_1,\ldots, x_{m-1})\leq x_m\leq \std(\mu)(x_1,\ldots, x_{m-1})\}.\]
Since $\std(\Delta)$ is a polyhedron of $\R^{m-1}$ by our induction hypothesis, we are done.

\subsection{}
\label{computation-log}
Let $U$ be a Zariski-open subset of $X$, let $g_1,\ldots, g_\ell$ be regular functions on $U$ and let $P$ be a 
pseudo-polyhedron 
of $(\R\cup\{-\infty\})^\ell$.
Let $Q$
be the closed analytic
domain $(\logb \val g)^{-1}(P)$ of $U\an$
(with $g=(g_1,\ldots, g_\ell)$).
A point $x$ of $U(C)$ belongs to $Q$ 
if and only if $\logb \val{g(x)}\in P$, which is equivalent to
\[\frac{-\log \left(\tau^{\std\left(\frac{\log \abs {g(x)}}{\log \abs t}\right)}\right)}{\log \tau}\in P,\]
which we may rewrite as
\[-\std\left(\frac{\log \abs {g(x)}}{\log \abs t}\right)\in P\]
or equivalently as
\[\loga \abs{g(x)}\in P_R+\mathfrak n^m\]
where we denote by $\mathfrak n$ the set of negligible elements
of $R$.


\begin{enonce}[remark]{Notation}
If $\Pi$ is a pseudo-polyhedron of $(R\cup\{-\infty\})^\ell$ for some $\ell$ and if $a$ is a non-negative
element of $R$ we shall denote by $\Pi_a$ the
pseudo-polyhedron $\Pi+[-a,a]^\ell$. If $\Pi$ and $a$ are bounded then $\Pi_a$ is bounded as well. 
\end{enonce}

\begin{lemm}
\label{lem-u-g}
Let $X$ be a $C$-scheme of finite type, 
let $g\colon X\to \A^{\ell}_C$ 
be
a morphism,
and let $\Pi$ be a bounded
pseudo-polyhedron of $(R\cup\{-\infty\})^\ell$ .
The following are equivalent:
\begin{enumerate}[i]
\item the analytic domain $(\logb \val g)\inv(\std(\Pi))$
of $X\an$ is compact; 
\item there exists a positive \emph{standard}
number $\epsilon$ such that the
semi-algebraic subset
$(\loga \abs g)\inv(\Pi_\epsilon)$ of $X(C)$ is definably compact. 

\end{enumerate}
\end{lemm}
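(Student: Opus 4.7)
The plan is to reduce to the case where $X$ is affine and translate both conditions into boundedness conditions on coordinate functions, using the key identity $\val z=\tau^{-\std(\loga \abs z)}$ that comes out of \ref{ss-field-c} and \ref{computation-log}. After choosing a closed immersion of $X$ into some $\A^N_C$ with coordinate functions $T_1,\ldots,T_N$ (reducing to affine $X$ by picking a finite affine cover and noting that both compactness in $X\an$ and definable compactness in $X(C)$ can be tested piecewise using Lemma \ref{lem-compact-open-cover}), condition (i) becomes equivalent to the existence of standard reals $r_j>0$ with $\val{T_j(p)}\leq r_j$ for all $p\in D_\flat:=(\logb \val g)\inv(\std(\Pi))$, because a compact closed analytic subdomain of $X\an$ must be contained in a strict affinoid $\{\val{T_j}\leq r_j\}$ of the form described in \ref{aff-neighb-weierstr}. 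Similarly, condition (ii) becomes equivalent to the existence of a standard $\epsilon>0$ and $M_j\in R_{>0}$ with $\abs{T_j(x)}\leq M_j$ on $D_\epsilon:=(\loga \abs g)\inv(\Pi_\epsilon)$, since $D_\epsilon$ is automatically closed as a preimage of a closed set.

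For (ii)$\Rightarrow$(i), I would first prove the key inclusion $\std(\Pi)\subset\Pi_\epsilon$ for every standard $\epsilon>0$: any $y\in\std(\Pi)$ is of the form $y=p+(y-p)$ with $p\in\Pi$ and $y-p\in\mathfrak n^\ell\subset[-\epsilon,\epsilon]^\ell$, since negligible elements are smaller than every standard positive real. Consequently, for any $x\in X(C)$ whose image in $X\an$ lies in $D_\flat$, the relation $\std(\loga \abs{g(x)})\in\std(\Pi)$ combined with $\loga \abs{g(x)}-\std(\loga \abs{g(x)})\in\mathfrak n^\ell$ forces $\loga \abs{g(x)}\in\Pi_\epsilon$, so $x\in D_\epsilon$ and $\abs{T_j(x)}\leq M_j$. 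Translating via the identity gives $\val{T_j(\pi(x))}\leq \tau^{-\std(\loga M_j)}$; because $M_j$ is the supremum of $\abs{T_j}$ over a semi-algebraic set whose defining parameters are of the form $\abs t^{c}$ with bounded $c\in R$, it is itself of this form and $\std(\loga M_j)\in\R$, making the bound a standard real. The extension from $C$-points to all Berkovich points of $D_\flat$ uses the Maximum Modulus Principle for strict affinoid algebras over the algebraically closed and divisibly valued field $C$.

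For the converse (i)$\Rightarrow$(ii), I would start from standard bounds $\val{T_j}\leq r_j$ on the compact $D_\flat$ and aim to extend them, with a slightly worse standard bound $r_j'>r_j$, to a thickened preimage $(\logb \val g)\inv(\std(\Pi)_\delta)$ for some standard $\delta>0$. The extension is a compactness argument: each point of $D_\flat$ admits an open neighborhood on which $\val{T_j}\leq r_j'$ by continuity, finitely many such neighborhoods cover $D_\flat$ by its compactness, and then one shows that for $\delta$ small enough the thickened preimage is contained in this open cover (otherwise one can extract a limit point in $D_\flat$ lying outside the cover). For any standard $\epsilon<\delta$, every $x\in D_\epsilon$ has $\pi(x)$ in this thickened set (since $\std$ maps $\Pi_\epsilon$ into $\std(\Pi)_\epsilon$), so $\val{T_j(\pi(x))}\leq r_j'$ back-translates to $\abs{T_j(x)}\leq \abs t^{-c}$ for some bounded $c\in R$, which gives the required definable compactness.

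The main obstacle I expect is the continuity extension in the direction (i)$\Rightarrow$(ii): making precise that a bound on $\val{T_j}$ over the compact $D_\flat$ propagates to a polyhedral thickening rather than just to a topological neighborhood. This should be handled by combining the local Weierstrass-domain description of \ref{aff-neighb-weierstr} with the fact that $\bigcap_\delta(\logb \val g)\inv(\std(\Pi)_\delta)=D_\flat$, extracting a subsequence argument from the compactness of a suitable enclosing affinoid. A secondary subtlety is the tameness of $M_j$ in (ii)$\Rightarrow$(i), guaranteeing that the sup of $\abs{T_j}$ over a semi-algebraic set defined with parameters of the form $\abs t^{\text{bounded}}$ is itself of this form; this follows from the semi-algebraic structure theorem but must be spelled out.
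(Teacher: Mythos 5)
Your (ii)$\Rightarrow$(i) direction is essentially on the right track and parallels the paper's argument; the paper organizes the same content through intermediate sets $K_i^M$ (the $C$-points of $X_i$ with $\loga\abs{f_{ij}}\leq M$) and $K_{i\flat}^M$ (the Berkovich points with $\logb\val{f_{ij}}\leq M$), exploiting the elementary sandwich $K_i^M\subset K_{i\flat}^M(C)\subset K_i^{M+\epsilon}$, which handles both the boundedness bookkeeping and the passage from $C$-points to Berkovich points in one stroke — so it avoids both the tameness worry about $M_j$ (a single standard $M$ is extracted directly from definable compactness of $D_\epsilon$) and the appeal to maximum modulus.

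The real gap is in your (i)$\Rightarrow$(ii) direction, in the step you yourself flag. After covering the compact $D_\flat$ by finitely many Berkovich opens on which $\val{T_j}\leq r_j'$, you want to show that some thickened preimage $(\logb\val g)\inv(\std(\Pi)_\delta)$ is still inside that union, and your justification is "otherwise one can extract a limit point in $D_\flat$ lying outside the cover." But the sets $(\logb\val g)\inv(\std(\Pi)_{1/n})$ are only closed in $X\an$, not compact: $X\an$ is not compact, and nothing you have established at this point prevents a sequence of bad points from escaping to infinity in some coordinate direction. The "suitable enclosing affinoid" you propose to extract from (i) encloses $D_\flat$ itself, not the thickened sets, which is precisely what is missing. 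One could repair this with tropicalization (Bieri-Groves polyhedrality of the image of $(\logb\val g,\logb\val T)$), but that imports heavier machinery than the lemma deserves.

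The paper sidesteps the issue entirely by staying in the non-standard world. The key observation is that for an \emph{infinitesimal} $a>0$ one has $\std(\Pi_a)=\std(\Pi)$, so every $C$-point of $(\loga\abs g)\inv(\Pi_a)$ maps under $X(C)\to X\an$ \emph{into} the compact $D_\flat$ — no Berkovich thickening is ever needed. Hence $(\loga\abs g)\inv(\Pi_a)\subset\bigcup_i K_i^{M+1}$ for every infinitesimal $a>0$. One then defines
\[
I=\left\{a\in R_{>0}\;:\;(\loga\abs g)\inv(\Pi_a)\subset\bigcup_i K_i^{M+1}\right\},
\]
notes that $I$ is an {\sc rcf}-definable subset of $R_{>0}$ (via the translation $\Pi\mapsto\abs t^{-\Pi}$ of \ref{sss-tp-definable}), and that it contains every positive infinitesimal. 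By o-minimality, $I$ is a finite union of intervals, and a definable set containing all positive infinitesimals must contain $(0,b)$ for some $b$ with $\std(b)>0$, hence some standard $\epsilon>0$. For this $\epsilon$ the set $(\loga\abs g)\inv(\Pi_\epsilon)$ is closed and contained in a definably compact set, hence definably compact. This o-minimality upgrade from the non-definable set of infinitesimals to a standard $\epsilon$ is exactly the ingredient your compactness argument is missing, and it is the reason the paper's proof of this direction is so short.
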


\begin{proof}
Choose a finite affine open cover $(X_i)$ of $X$ and for each $i$,
a finite family $(f_{ij})$
of regular functions on $X_i$ that generate $\mathscr O_X(X_i)$
as a $C$-algebra. 
For every $i$ and every positive bounded element $M$ of $R$
(resp. every positive real number $M$), denote by 
$K_i^M$ (resp. $K_{i\flat}^M$) the subset of $X_i(C)$
consisting of points at which
$\loga \abs{f_{ij}}\leq M$ for all $j$ (resp. 
the subset of $X_i\an$
consisting of points at which
$\logb
\val{f_{ij}}\leq M$ for all $j$). 
For every positive real number $M$ and every positive
real number $\epsilon$
we have the inclusions
\[
K_i^M\subset
K_{i\flat}^M(C)\subset K_i^{M+\epsilon}.\]

Assume that (i) holds. 
As
$(\logb \val g)\inv(\std(\Pi))$ is compact, 
it is contained  in $ \bigcup_i K_{i\flat}^M$
for some positive real number $M$. 

Let
$a$
be a positive infinitesimal element of $R$.
The 
subset
$(\loga \abs g)\inv(\Pi_a)$ of $X(C)$ 
is contained in $(\logb \val g)\inv(\std(\Pi_a))
=
(\logb \val g)\inv(\std(\Pi))$; 
it is thus contained in the definably compact
semi-algebraic subset $\bigcup_i K_i^{M+1}$. 

Let $I$ be the set of positive elements $a$
of $R$ such that  $(\loga \abs g)\inv(\Pi_a)
\subset \bigcup_i K_i^{M+1}$. This is a
definable subset of $R_{>0}$ which contains by the 
above every positive infinitesimal element;
thus it contains some standard positive element $\epsilon$. 
The semi-algebraic subset $(\loga \abs g)\inv(\Pi_\epsilon)$ of 
$X(C)$ is closed by its very definition, and is contained 
in the definably compact semi-algebraic subset
$\bigcup_i K_i^{M+1}$
by the choice of $\epsilon$, so 
it
is definably compact; thus (ii) holds. 

Conversely, assume that (ii) holds.
Then there exists a positive real
number $M$
such that 
$(\loga \abs g)\inv(\Pi_\epsilon)\subset
\bigcup_i K_i^M$. 

The set of of $C$-points of 
$(\logb \val g)\inv(\Pi)$ is contained in 
$(\loga \abs g)\inv(\Pi_\epsilon)$, 
hence in $\bigcup_i K_i^M$. The latter is itself contained
in the set of $C$-points of $\bigcup_i K_{i\flat}^M$; 
thus $(\logb \val g)\inv(\Pi)\subset
\bigcup_i K_{i\flat}^M$, which implies that $(\logb \val g)\inv(\Pi)$
is compact.
\end{proof}

\begin{enonce}[remark]{Notation}
Let $X$ and $g$ be as in Lemma
\ref{lem-u-g}
above. The set of 
bounded pseudo-polyhedra
$\Pi$ of $(R\cup\{-\infty\})^\ell$
such that the equivalent assertions
(i) and (ii) of Lemma
\ref{lem-u-g}
hold
will be denoted by $\Theta(g)$. 
For any $\Pi\in
\Theta(g)$, 
we will denote
by $\Lambda(g,\Pi)$ the set of positive
real numbers $\epsilon$ as in (ii).
\end{enonce}

\begin{rema}\label{rem-theta-epsilon}
Let $X$ and $g$ be as in Lemma
\ref{lem-u-g}
above, and let $\Pi\in \Theta(g)$. 
The set $\Lambda(g,\Pi)$ is non-empty by definition; choose
$\epsilon$ therein. 
If $\eta$ is any real number in $(0,\epsilon)$ then it is clear
that $\Pi_\eta\in \Theta(g)$ and that 
$(\epsilon-\eta)\in \Lambda(g,\Pi_\eta)$. 
\end{rema}


\section{The main theorem: statement and consequences}

\begin{theo}\label{main-theorem}
Let $X$ be a smooth scheme over $C$ of pure dimension $n$.
There exists a unique morphism of sheaves of bi-graded differential $\R$-algebras on $X^{\mathrm{Zar}}$
\begin{align*}
\mathsf A^{\bullet, \bullet}&\longrightarrow
\mathsf B^{\bullet, \bullet}\\
 \omega &\longmapsto \omega_\flat
 \end{align*}
such that for every Zariski-open subset $U$ of $X$, every finite family $(f_1,\ldots, f_m)$ of regular functions
on $U$, 
every pair $(I,J)$ of subsets of $\{1,\ldots, m\}$
and every  $(I\cup J)$-vanishing reasonably
smooth function $\phi$ in $\mathscr S^{I,J,(f_i)}$, 
one has
\begin{multline*}
\left[\phi\left(\loga \abs{f_1},\ldots,\loga \abs{f_m}\right) 
\dl\abs{f_I}\wedge \dArg f_J\right]_\flat =\\
\phi\left(\logb \val{f_1},\ldots,\logb \val {f_m}
\right)
\di \logb\val{f_I}\wedge \dc \log \val{f_J}.
\end{multline*}
Moreover, this morphism enjoys the following properties;
let $U$ be a
Zariski-open
subset 
of
$X$ and let $\omega \in  \mathsf A^{p,q}(U)$. 
\begin{enumerate}[1]

\item Assume that the support of $\omega$ is contained in
some definably compact
semi-algebraic
subset of $U(C)$. Then $\omega_\flat$ is compactly supported. 

We assume from now on that $p=q=n$. 

\item Let $g\colon U\to \A_C^\ell$ be a morphism
and let $\Pi$ be an element of $\Theta(g)$. 
The integral
$\int_{\left(\loga \abs g\right)\inv (\Pi)}\abs\omega$
is bounded, which implies that
$\int_{\left(\loga \abs g\right)\inv (\Pi)}\omega$ is bounded too.

\item Let $(V_i)$ be a finite family
of Zariski-open subsets of $U$; for every $i$, 
let $g_i$ be a morphism
from $V_i\to \A^{\ell_i}_C$
and let $\Pi_i$ be an element of $\Theta(g_i)$. Then
\setcounter{equation}{0}
\begin{minipage}{5in}
\begin{align}
\label{theo-limomega}
\std\left(\int_{\bigcup_i \left(\loga \abs{g_i}\right)\inv(\Pi_{i,\epsilon})}\omega\right)
&\longrightarrow  \int_{\bigcup_i
\left(\logb \val{g_i}\right)\inv(\std(\Pi_i))}\omega_\flat\\
&and\notag\\
\label{theo-limabsomega}
\std\left(\int_{\bigcup_i \left(\loga \abs{g_i}\right)\inv(\Pi_{i,\epsilon})}\abs \omega\right)
&\longrightarrow \int_{\bigcup_i
\left(\logb \val{g_i}\right)\inv(\std(\Pi_i))}\val{\omega_\flat}\end{align}
\end{minipage}
when the positive \emph{standard}
number $\epsilon$
belongs to $\bigcap_i \Lambda(g_i,\Pi_i)$ 
and tends to $0$. 

Moreover there exists a positive negligible element $\alpha\in R$ such that
\begin{minipage}{5in} 
\begin{align}\label{theo-negligible}
\std\left(\int_{\bigcup_i \left(\loga \abs{g_i}\right)\inv(\Pi_{i,\epsilon})
\setminus\bigcup_i \left(\loga \abs{g_i}\right)\inv(\Pi_{i,\alpha})}\abs\omega\right)
&\longrightarrow 0\end{align}
\end{minipage} 
when the positive \emph{standard}
number $\epsilon$
belongs to $\bigcap_i \Lambda(g_i,\Pi_i)$
and tends to $0$.

\item Assume that the support of $\omega$ is contained in a definably compact
semi-algebraic
subset of $U(C)$, which implies by (1)
that $\omega_\flat$ is compactly supported. Then $\int_{U(C)}\abs \omega$
is bounded and

\begin{minipage}{5in}
\begin{align}
\label{theo-omega-exact}
\std\left(\int_{U(C)}\omega\right)
&=\int_{U\an}\omega_\flat\\
&and\notag\\
\label{theo-absomega-exact}
\std\left(\int_{U(C)}\abs \omega\right)
&=\int_{U\an}\val{\omega_\flat}.
\end{align}
\end{minipage}

\end{enumerate}
\end{theo}

\begin{rema}\label{rem-limit-moregeneral}
Statement (3c) has the following consequence. Assume that we are given for every
small enough positive standard $\epsilon$ 
in $\bigcap_i \Lambda(g_i,\Pi_i)$
a
semi-algebraic subset $D_\epsilon$ of $U(C)$
satisfying
\[\bigcup_i \left(\loga \abs{g_i}\right)\inv(\Pi_{i,\alpha})\subset
D_\epsilon\subset
\bigcup_i \left(\loga \abs{g_i}\right)\inv(\Pi_{i,\epsilon}).\]
Then
\begin{align}
\std\left(\int_{D_\epsilon}\omega\right)
&\longrightarrow \int_{\bigcup_i
\left(\logb \val{g_i}\right)\inv(\std(\Pi_i))}\omega_\flat\\
&\text{and}\notag\\
\std\left(\int_{D_\epsilon}\abs\omega\right)
&\longrightarrow \int_{\bigcup_i
\left(\logb \val{g_i}\right)\inv(\std(\Pi_i))}\val{\omega_\flat}
\end{align}
when the positive \emph{standard}
number $\epsilon$ tends to $0$. 
\end{rema}

\subsection{A statement about ordinary limits of complex integrals}\label{classical}
Our purpose is now to state a corollary of our main theorem in a more classical language, 
namely, in terms of limits of usual complex integrals, without using any ultrafilter nor any non-standard model of
$\R$ or $\C$. 

Let us recall that $\mathscr M$ denotes the field of meromorphic functions around the origin of $\C$. 
Let $X$ be a smooth $\mathscr M$-scheme of finite type and of pure dimension $n$, and let $(U_i)$ be a finite Zariski-open cover of $X$. For every $i$, let $(f_{ij})_{1\leq j\leq n_i}$ be a finite family of regular functions on $U_i$; for every subset $I$ and $J$ of $\{1,\ldots, n_j\}$ of cardinality $n$, let $\phi_{i,I, J}$ be a reasonably smooth and $(I\cup J)$-vanishing
complex-valued function defined on some suitable open subset of $(\R\cup\{-\infty\})^{n_i}$. 

Since $\mathscr M$ is the field of meromorphic function around the origin, $X$ gives rise to a complex analytic space, relatively algebraic, over a small enough punctured disc $D^*$, which we still denote by $X$. Up to shrinking $D^*$ we can assume that every $U_i$ is a relative Zariski-open subset of the analytic space $X$, and that the functions $f_{ij}$ are relatively algebraic holomorphic functions on $U_i$. 

Assume that there exists a relative $(n,n)$-form $\omega$ on $X$ whose support is proper over $D^*$
and such that 
\[\omega|_{U_i}=\left(\frac {-1}{\log \abs t}
\right)^n\sum_{I,J}\phi_{i,I,J}
\left(-\frac{\log \abs{f_{i1}}}{\log \abs t},\ldots,-\frac{\log
\abs{f_{in_i}}}{\log \abs t}\right)
\d\log \abs{f_{i,I}}\wedge \dArg {f_{i,J}}\]
for every $i$
(otherwise said, the forms locally defined by the above formulas coincide on overlaps, and the global form
obtained by glueing them is relatively compactly supported).

The $t$-adic completion of $\mathscr M$ is the field $\C\llp t \rrp$ of Laurent series. Fix $\tau\in (0,1)$
and endow $\C\llp t \rrp$ with the $t$-adic absolute value $\val \cdot $ that maps $t$ to $\tau$; let us denote by $X\an$ the
Berkovich analytification of $X\times_{\mathscr M}\C\llp t \rrp$. 

Then the existence of our morphism of sheaves of bi-graded
differential $\R$-algebras implies the existence of a
compactly supported $(n,n)$-form $\omega_\flat$ on
$X\an$ (in the sense of \cite{chambertloir-d2012})
such that 
\begin{multline*}
\omega_\flat|_{U_i\an}=\\
\left(\frac {-1}{\log  \tau}\right)^n\sum_{I,J}\phi_{i,I,J}
\left(-\frac{\log \val{f_{i1}}}{\log  \tau},\ldots,-\frac{\log
\val{f_{in_i}}}{\log \tau}\right) 
\di \log \val{f_{i,I}}\wedge \dc  \log \val{f_{i,J}}
\end{multline*}
for every $i$. 

Now assertion (4) has the following consequence.

\begin{theo}\label{epilogue}
We have
 \[\lim_{t \to 0}\int_{X_t}\omega|_{X_t} =\int_{X\an}\omega_\flat.\]
\end{theo}

\begin{proof}
Let $(z_n)$ be a zero-sequence of non-zero complex numbers
such that $\int_{X_{z_n}}\omega|_{X_{z_n}}$
has a limit
in $\R\cup\{-\infty,+\infty\}$
when $n$ tends to infinity, 
and let $\mathscr U$ be any ultrafilter
on $\C$ containing all cofinite subsets of
$\{z_n\}_n$. Then applying our general construction with this specific $\mathscr U$ (we recall that 
$\mathscr M$ has a natural embedding into our field $C$ of non-standard complex numbers)
we see that
\[\int_{X_{z_n}}\omega|_{X_{z_n}}\longrightarrow \int_{X\an}\omega_\flat\]
when $n$ tends to infinity. As this holds for an arbitrary
sequence $(z_n)$ as above, we are done. 
\end{proof}

\section{Proof of the main theorem}

\subsection{Compatibility with integration}\label{proof-integration-compatible}
We shall in some sense
establish the good behavior with respect to integration
\emph{before} showing the existence of the morphism $\omega\mapsto \omega_\flat$.
Let us make this more precise.

\subsubsection{Our setting}\label{setting-omega-explicit}
We assume that $\omega$ can be written 
\[\sum_{I,J}
\phi_{I,J}\left(\loga \abs{f_1},\ldots,\loga \abs{f_m}\right)
\dl\abs{f_I}\wedge \dArg{f_J}\]
where $I$ and $J$ run through the set of subsets
of $\{1,\ldots, m\}$ of cardinality $n$, 
where $(f_i)_{1\leq i\leq m}$ is a family of regular invertible functions
on
$U$, 
and where $\phi_{I,J}$ is an $(I\cup J)$-vanishing
reasonably
smooth function in
$\mathscr S^{I,J,(f_i)}$ for each $(I,J)$. 
We denote by $\omega_\flat$ the form
\[
\sum_{I,J}\phi_{I,J}\left(\logb \val{f_1}\ldots,\logb \val {f_m}\right)
\di \logb\val{f_I}\wedge \dc \log \val{f_J}\]
(we insist that our morphism has not yet been defined, so $\omega_\flat$
is
currently 
just a notation for the form above).

We also assume that the open covering $(V_i)$ is the trivial covering consisting of one open subset $V_1=U$
and we write $g$ instead of $g_1$,
$\Pi$ instead of $\Pi_1$
and $\ell$ instead of $\ell_1$.

The whole subsection \ref{proof-integration-compatible}
will be devoted to the proof of (2) and (3) in this setting.

\subsubsection{Proof of
\textnormal{(2)}}\label{proof-bounded}
We shall in fact prove that $\int_K \abs \omega$ is bounded for any
$t$-bounded
definably compact
semi-algebraic  subset $K$
of $U(C)$; so, let us fix such a subset $K$. 
Since
$K$ is definably compact and since
$\loga \abs {f_i}$ only takes bounded values
on the invertible locus of $f_i$,
there exists a positive standard real number
$A$ such that $\loga \abs {f_i}\leq A$ on $K$ for all $i$; thus there exists a positive standard real number
$N$ such that 
$\abs{\phi_{I,J}(\loga \abs {f_1},\ldots, \loga \abs {f_m})}\leq N$ on $K$
for all $(I,J)$.

Fix $I$ and $J$. By the very definition of {$(I\cup J)$-vanishing
reasonably smooth functions,
there exist two open subsets
$V_{I,J}'\subset V_{I,J}$ of $(\R\cup\{-\infty\})^m$, defined
by $\Q$-linear inequalities, and such that the following holds:
\begin{itemize}[label=$\diamond$]
\item $\phi_{I,J}$ is defined on $V_{I,J}$ and $(\loga \abs{f_1},\ldots,
\loga \abs{f_m})(U(C))\subset V_{I,J}(R)$; 
\item $\phi_{I,J}|_{V'_{I,J}}=0$, and for every $i\in I\cup J$, 
the $i$-th coordinate function does not take the value $-\infty$
on $V_{I,J}\setminus V'_{I,J}$.
\end{itemize}
Let $K_{I,J}$ be the pre-image of $V_{I,J}
\setminus V'_{I,J}$ in $K$ under
$(\loga \abs{f_1},\ldots,
\loga \abs{f_m})$. This is a definably compact semi-algebraic
subset of $K$ on which $\abs{f_i}$ does not vanish as soon as
$i\in I\cup J$; by construction, 
$\phi_{I,J}\left(\loga \abs{f_1},\ldots,\loga \abs{f_m}\right)$
vanishes on $K\setminus K_{I,J}$. 

By enlarging $A$, we may assume that for all $I,J$ and all $i\in I\cup J$ one has the minoration $\log \abs{f_i}\geq -A$ on $K_{I,J}$.

For every subset $L$ of $\{1,\ldots, m\}$, denote by $\mathrm D_L$
the subset of $U(C)$  consisting of points at which
every $f_i$ with $i\in L$ is invertible.
Let $i\in \{1,\ldots, m\}$; on
$\mathrm D_{\{i\}}$ we set $f_i=r_ie^{2i\pi\alpha_i}$ for every $i$
(where $r_i=\abs{f_i}$ and $\alpha_i$ is a multi-valued function,
which we will use
only through the well-defined
differential form $\d \alpha_i$).
Let $I$ and $J$ be two subsets of $\{1,\ldots, m\}$
of cardinality $n$. 
Let $i_1<\ldots<i_n$ be the elements of $I$, and $j_1<\ldots<j_n$ be those of $J$; on $\mathrm D_{I\cup J}$, we
set
$\frac{\d r_I}{r_I}=\frac{\d r_{i_1}}{r_{i_1}}\wedge\ldots\wedge
\frac{\d r_{i_n}}{r_{i_n}}$
and
$\d \alpha_J=\d\alpha_{j_1}\wedge\ldots\wedge \d\alpha_{j_n}.$
Let $S^1_R$ denote the ``unit circle" $\{z\in C,\abs z=1\}$. Let $u_{I,J}$ be the map from $\mathrm D_{I,J}$ to
$(R_{>0})^n\times (S^1_R)^n$ that maps a point $x$ to
$\left(\abs{f_{i_1}(x)}, \ldots, \abs {f_{i_n}(x)}, \frac {f_{j_1}(x)}{\abs{f_{j_1}(x)}}, \ldots, \frac{f_{j_n}(x)}
{\abs{f_{j_n}(x)}}\right).$

We denote by $\rho_j$
the coordinate function on $(R_{>0})^n\times(S^1_R)^n$ 
corresponding to the $j$-th factor $R_{>0}$, and by $\varpi_j$
the multi-valued argument function corresponding to the $j$-th factor $S^1_R$. The form
$\d \varpi_j$
is well-defined (we can describe it alternatively as the pull-back under the projection to the $j$-th
factor $S^1_R\simeq \{(x,y)\in R^2,x^2+y^2=1\}$
of the form
$x\d y-y\d x$). Let $E_{I,J}$ denote the étale locus of $u_{I,J}$; by definability and o-minimality, there exists an integer $d$ such that
the fibers of $u_{I,J}|_{E_{I,J}\cap K}$ are all of cardinality $\leq d$ for all $I$ and $J$.

We then have (we recall that $\lambda=-\log\abs t$)
\begin{align}
\int_K\abs \omega&\leq\frac N{\lambda^n}\sum_{I,J}\int_{K_{I,J}}\left|\frac{\d r_I}{r_I}\wedge \d\alpha_J\right|\\
&=\frac N{\lambda^n}\sum_{I,J}\int_{K_{I,J}\cap E_{I,J}}\left|\frac{\d r_I}{r_I}\wedge\d\alpha_J\right|\\
\label{ineq-continuity}
&\leq\frac{Nd}{\lambda^n}\sum_{I,J}\int_{u_{I,J}(K_{I,J}\cap E_{I,J})}\left|\frac{\d \rho_1}{\rho_1}\wedge\ldots
\wedge \frac{\d \rho_n}{\rho_n}\wedge\frac{d \varpi_1}{2\pi}\wedge \ldots \wedge \frac{\d \varpi_n}{2\pi}\right|\\
\label{equation-reference}
&\leq\frac{Nd}{\lambda^n}\sum_{I,J}\int_{\abs{f_I}(
K_{I,J}\cap E_{I,J})}\frac{\d \rho_1}{\rho_1}\wedge\ldots
\wedge \frac{\d \rho_n}{\rho_n}\\
&\leq{m\choose n}^2\frac{Nd}{\lambda^n}\int_{\left[\abs t^A,\abs t^{-A}\right]^n}\frac{\d \rho_1}{\rho_1}\wedge\ldots
\wedge \frac{\d \rho_n}{\rho_n}\\
&\leq{m\choose n}^2\frac{Nd}{\lambda^n}
\left(-2A\log \abs t\right)^n\\
&={m\choose n}^2Nd(2A)^n.
\end{align}

Hence $\int_K\abs \omega$ is bounded, as announced.

\subsubsection{Proof of \textnormal{(3) (\ref{theo-negligible})}}
The proof of (a) and (b) will rest on several steps allowing ourselves to reduce to a simpler case, 
in which it will be possible to perform some explicit computations that are the core of our proof. But for achieving this reduction we shall need (c), hence we
start by proving it.

Let $\Pi\in \Theta(g)$. Choose a positive standard real 
number $a$
in $\Lambda(g,\Pi)$ (such an $a$ exists in view of Remark
\ref{rem-theta-epsilon}).
For every non-negative standard real number
$\epsilon$ we set $P_\epsilon=\std(\Pi)+[-\epsilon, \epsilon]^\ell\subset \R^\ell$
(so $P_0=\std(\Pi)$).
Let us introduce some notation: 

\begin{itemize}[label=$\diamond$]
\item  $V_\epsilon=\left(\logb \val g\right)\inv(P_\epsilon)\subset U\an$, 
for $\epsilon$ a standard element of $[0,a]$; 
\item $V_{\epsilon, \eta}=\left(\logb \val g\right)\inv(\overline{P_\epsilon\setminus P_\eta})
\subset U\an$, 
for $\epsilon$ a standard element of $[0,a]$ and $\eta$ a standard element of
 $(0,\epsilon)$; 
 \item $K_\epsilon=\left(\loga \abs g\right)\inv(\Pi_\epsilon)\subset U(C)$, 
for $\epsilon$ any element of $R$ lying on $[0,a]$; 
\item $K_{\epsilon, \eta}=\left(\loga \abs g\right)\inv(\overline{\Pi_\epsilon\setminus \Pi_\eta})
\subset U(C)$, 
for $\epsilon$ any element
of $R$ lying on $[0,a]$ and $\eta$
any element of $R$ lying on 
 $(0,\epsilon)$. 
\end{itemize}

We fix
two subsets $I$ and $J$ of $\{1,\ldots, m\}$ of cardinality $n$. For every standard real number $A$
we shall need the following extra notation: 
\begin{itemize}[label=$\diamond$]
\item $V_\epsilon^A$ (\resp $V_{\epsilon,\eta}^A$)
for the intersection of $V_\epsilon$ (\resp $V_{\epsilon,\eta}$)
with the closed analytic domain of $U\an$
defined by the inequalities $\logb \val{f_i}\geq A$ for all $i\in I$; 

\item $K_\epsilon^A$ (\resp $K_{\epsilon,\eta}^A$)
for the intersection of $K_\epsilon$ (\resp $K_{\epsilon,\eta}$)
with the closed semi-algebraic subset of $U(C)$
defined by the inequalities $\loga \abs{f_i}\geq A$ for all $i\in I$; 
\end{itemize}

The pre-image of $V_{I,J}\setminus V'_{I,J}$ 
(the notation is introduced
in the
second paragraph of \ref{proof-bounded}) in $K_a$ under
$(\loga \abs{f_i})_{1\leq i\leq m}$ is definably compact, and
none of the functions $f_i$ with $i\in I$ vanishes on it;  thus there exists some standard real number $A$ such that
every point of $K_a$ at which at least one
of
the $\loga \abs{f_i}$ is smaller than $A$ belongs to the pre-image of $V'_{I,J}$,
so $\phi_{I,J}(\loga \abs f)$ vanishes at such a point. 
Using mutatis mutandis the same argument 
and up to decreasing $A$ if necessary, we can ensure that
 $\phi_{I,J}(\logb \val f)$ vanishes at every point of $V_a$ at which at least one the $\logb \val{f_i}$ is smaller than $A$.

Otherwise said, 
there exists a
standard real number $A$ such that 
for
every element $\epsilon$ of $R$ lying
on $[0,a]$, the function 
$\phi_{I,J}(\loga \abs f)$ vanishes on
$K_\epsilon\setminus K_\epsilon^A$ and 
the function
$\phi_{I,J}(\logb \val f)$ vanishes on $V_\epsilon\setminus V_\epsilon^A$.

We are now going to show
that 
$\mathrm{Vol}(\logb \val{f_I}(V_\epsilon^A\setminus V_0^A))$
tends to zero when $\epsilon$ tends to zero, which is the core
of the proof of (3) (\ref{theo-negligible}). Our method for proving this claim
consists in describing $\logb \val{f_I}(V^A_\epsilon)$ more or less as the image under $\logb \val{f_I}$ 
of a \emph{piecewise-linear subset} of $V^A_a$, which allows us
to get rid of non-archimedean geometry and only deal with usual real integration.

Recall that the skeleton of $\mathbf G_{\mathrm m}^{I,\mathrm{an}}$
is the closed subspace of
$\mathbf G_{\mathrm m}^{I,\mathrm{an}}$
homeomorphic to $\R^I$
via the mapping
$\mathrm{sk} : \R^I \to \mathbf G_{\mathrm m}^{I,\mathrm{an}}$
sending $(\log (r_i))_{i \in I}$
to the seminorm
assigning the real number $\max_{m \in \Z^I} \abs {a_m} \prod_{i \in I} r_i^{m_i}$
to a Laurent polynomial
$\sum_{m \in \Z^I} a_m T^m$.
Let $\Sigma$ be the pre-image 
of the skeleton of $\mathbf G_{\mathrm m}^{I,\mathrm{an}}$
under $f_I|_{V_a^A}
$. This is a 
skeleton of $V_a^A$ in the sense of
\cite{ducros2012b}, 4.6 (see \loccit, Thm. 5.1; note that some mistake in this paper is
corrected in \cite{ducros2013c}); in particular it inherits
a canonical piecewise-linear structure 
and $(\logb \val {f_I})|_\Sigma$ is piecewise-linear.
Moreover if $W$ is any compact analytic domain of $V_a^A$, the intersection
$\Sigma\cap W$
is a piecewise-linear subset of $\Sigma$
and
$\logb \val{f_I}(W)^{(n)}=\logb \val{f_I}((\Sigma\cap W)^{(n)})$,
where the
superscript $^{(n)}$ denotes the pure $n$-dimensional part
of a piecewise-linear set (this last equality
is a lemma which is shown in
a forthcoming version of \cite{chambertloir-d2012}}; its proof is not difficult and rests on the description of a skeleton in terms of tropical dimension, see \cite{chambertloir-d2012}, 2.3.3); in particular,
the volume of  $\logb \val{f_I}(W)$ is equal to that
of $\logb \val{f_I}(W\cap \Sigma)$.

Choose $\epsilon \in (0,a]$. 
From the equality
$V_\epsilon^A\setminus V_0^A=\bigcup_{0<\eta<\epsilon}
V_{\epsilon,\eta}^A$, 
we get
\begin{align*}
\mathrm{Vol}(\logb \val{f_I}((V_\epsilon^A\setminus V_0^A))
&=\sup_{0<\eta<\epsilon}\mathrm{Vol}(\logb \val{f_I}(V_{\epsilon,\eta}^A))\\
&=\sup_{0<\eta<\epsilon}\mathrm{Vol}(\logb \val{f_I}(\Sigma\cap V_{\epsilon,\eta}^A)
)\\
&=\mathrm{Vol}(\logb \val{f_I}((\Sigma\cap V_\epsilon^A)\setminus (\Sigma\cap V_0^A)).
\end{align*}
Now $(\Sigma\cap V_\epsilon^A)_{0<\epsilon\leq a}$ is a non-increasing family of compact piecewise linear subsets of
$\Sigma$ with intersection $\Sigma\cap V_0^A$, and $\logb \val {f_I}|_\Sigma$ is piecewise linear.
Since $\dim \Sigma\leq n$,  this implies that
that $\mathrm{Vol}(\logb \val{f_I}(\Sigma\cap V_\epsilon^A\setminus \Sigma\cap V_0^A))$
tends to zero when $\epsilon$ tends to zero. 
By the above, this means that $\mathrm{Vol}(\logb \val{f_I}(V_\epsilon^A\setminus V_0^A))$
tends to zero, as announced.

In order to end the proof of (3) (\ref{theo-negligible}) we now have to understand
the consequences  in the non-standard world
of the limit statement above (which involves only standard objects); 
this step rests in a crucial way on {\sc doag}-definability.

For every standard $\epsilon\in (0,a]$ the set $
\Lambda_\epsilon:=\logb \val{f_I}(V_\epsilon^A\setminus V_0^A)$ is {\sc doag}-definable,
and depends {\sc doag}-definably on $\epsilon$.
Thus $\Lambda_{\epsilon,R}$
makes sense for every element $\epsilon\in R$ with $0<\epsilon \leq a$.

Let $D$ be the set of positive elements $x\in R$ such that 
$x<a/2$ and
\[\loga \abs{f_I}\left(K_\epsilon^A\setminus K^A_x\right)
\subset
\Lambda_{2\epsilon,R}\]
for all $\epsilon\in (x,\frac a2)$.
An element $x$ of $R$ belongs to $D$ 
if and only if the implication 
\[(\abs {g(z)}\in \abs t^{-(\Pi_\epsilon \setminus \Pi_x)}
\;\;\text{and}\;\;
\abs{f_I(z)}\in\abs t^{[A,+\infty)^I})
\Rightarrow
\abs{f_I(z)}
\in \abs t^{-\Lambda_{2\epsilon, R}}
\]
holds for all $z\in U(C)$. 
It thus  follows from
\ref{sss-tp-definable}
that $\abs t^D$ is definable; but since it is one-dimensional, it is a finite union of intervals by o-minimality, so $D$
is also such a union, hence is definable as well. 
Moreover, it contains by definition every
bounded $x$
whose standard part belongs to $(0,\frac a2]$. 
As a consequence, $D$ contains $[\alpha,\frac a2]$ for some positive negligible element $\alpha$. 

For all elements $\epsilon$
of $R$ lying on $(\alpha,a/2)$ we have
\[\loga \abs{f_I}(K_\epsilon^A\setminus K_\alpha^A)
\subset
\Lambda_{2\epsilon,R}.\]
The inclusion above holds in particular for every positive
standard $\epsilon<a/2$; for such an $\epsilon$
we thus have 
\[\frac 1 {\lambda^n}\int_{\abs{f_I}(K_\epsilon^A\setminus K_\alpha^A)}\frac{\d \rho_1}{\rho_1}\wedge\ldots
\wedge \frac{\d \rho_n}{\rho_n}\leq \mathrm{Vol}(\Lambda_{2\epsilon}).\]
Since $ \mathrm{Vol}(\Lambda_{2\epsilon})\longrightarrow 0$ when $\epsilon\longrightarrow 0$, 
it follows that 
\[\std\left(\frac 1{\lambda^n}\int_{\abs{f_I}(K_\epsilon^A\setminus K_\alpha^A)}
\frac{\d \rho_1}{\rho_1}\wedge\ldots
\wedge \frac{\d \rho_n}{\rho_n}\right)\longrightarrow 0\] when $\epsilon\longrightarrow 0$. 
In view of inequality (\ref{equation-reference}) of paragraph \ref{proof-bounded}, 
this implies that
\[\std\left(\int_{K_\epsilon^A\setminus K_\alpha^A}\left|
\phi_{I,J}(\loga \abs{f_1},\ldots, \loga \abs{f_m})\dl\abs{f_I}\wedge \dArg{f_J}\right|\right)
\longrightarrow 0\]
when $\epsilon\longrightarrow 0$.
But by the choice of $A$
the integral \[\int_{K_\epsilon^A\setminus K_\alpha^A}\left|\phi_{I,J}(\loga \abs{f_1},\ldots, \loga \abs{f_m})\dl\abs{f_I}\wedge \dArg{f_J}\right|\]
is equal to 
\[\int_{K_\epsilon\setminus K_\alpha}\left|\phi_{I,J}(\loga \abs{f_1},\ldots, \loga \abs{f_m})\dl\abs{f_I}\wedge \dArg{f_J}
\right|,\]
so that 
\[\std\left(\int_{K_\epsilon\setminus K_\alpha}\abs
{\phi_{I,J}(\loga \abs{f_1},\ldots, \loga \abs{f_m})\dl\abs{f_I}\wedge \dArg{f_J}}\right)
\longrightarrow 0\]
when $\epsilon\longrightarrow 0$.

The infinitesimal element $\alpha$ above depends a priori on $(I,J)$; but by taking it large enough
(and still infinitesimal) we can ensure that it does not. Then 
\[\std\left(\int_{K_\epsilon\setminus K_\alpha}\abs
\omega\right)
\longrightarrow 0\]
when $\epsilon\longrightarrow 0$, which ends the proof of 
(3) (\ref{theo-negligible})
in our particular setting.

\subsubsection{Proof of \textnormal {(3) (\ref{theo-limomega})}
and \textnormal{(3)
(\ref{theo-limabsomega})}
in our setting}
Assertions \textnormal {(3) (\ref{theo-limomega})}
and \textnormal{(3)
(\ref{theo-limabsomega})}
involve the form to be integrated $\omega$, which is defined with an explicit formula using the functions $f_i$, 
and the domain of integration, whose definition uses another family of functions $g$ and a pseudo-polyhedron $\Pi$. 
We will first simplify slightly this set of data, by showing that we may assume that $f=g$ and $\Pi$ is of the form $P_R$ for some
pseudo-polyhedron $P\subset (\R\cup\{-\infty\})^\ell$ (and so $\std (\Pi)=P$),
with moreover 
$\logb \val f((\logb \val f)\inv(P))=P$. This reduction essentially uses
(3) (\ref{theo-negligible}) through its consequence Remark  \ref{rem-limit-moregeneral}, 
together with some elementary definability arguments.

Set $h=(f,g)$, $P=\std(\Pi)$, $W=\left(\logb \val g\right)\inv(P)\subset V\an$, and 
$Q=\logb \val h(W)\subset \R^{m+\ell}$.
Then $W=(\logb \val h)\inv(Q)$. We are now going to explain why it is sufficient to prove assertion (3) for
$(Q_R,h)$ instead of $(\Pi, g)$. So we assume (3) (a) and (b) hold for $(Q_R,h)$.

If $\epsilon$ is a positive real number we clearly have $\left(\logb \val h\right)\inv(Q_\epsilon)
\subset\left(\logb \val g\right) \inv(P_\epsilon)$. 
On the other hand for every $\epsilon>0$ the set $\left(\logb \val h\right)\inv(Q_\epsilon)$
is a neighborhood of $W$, hence contains
$\left(\logb \val g\right)\inv(P_\eta)$ for some
$\eta$ which can be taken in $(0,\epsilon]$
(here we use topological properness -- recall that $\Pi\in \Theta(g)$).
Let $\delta(\epsilon)$ denote the
least upper bound of
\[\left\{\eta\in (0,\epsilon), \left(\logb \val g\right)\inv(P_\eta)\subset
\left(\logb \val h\right)\inv(Q_\epsilon)\right\}\;;\]note that
by compactness we have $\left(\logb \val h\right)\inv(P_{\delta(\epsilon)})
\subset \left(\logb \val h\right)\inv(Q_\epsilon)$. 
Then $\delta$ is a {\sc doag}-definable function; in view of the fact that $\delta(\epsilon)\leq\epsilon$
by definition, this implies that there exists a positive rational number $r$ and a positive real number $M$
such that $\delta(\epsilon)=M\epsilon^r$ for $\epsilon$ small enough. 

This implies that 
\[\left(\loga \abs h\right)\inv(Q_{R,\frac \epsilon2})
\subset \left(\loga \abs g\right)\inv(\Pi_{\epsilon})
\subset 
\left(\loga \abs h\right)\inv(Q_{R,\frac 2M\epsilon^{1/r}})\]
for $\epsilon$ a small enough standard positive real number. 
Since we assume that (3) (a) and (3) (b) hold for $(Q_R,h)$ (and since (3) (c) has already been proved)
it follows from Remark \ref{rem-limit-moregeneral} 
that 
\begin{align*}
\std\left(\int_{\left(\loga \abs g\right)\inv(\Pi_\epsilon)}
\omega\right)
&\longrightarrow
\int_{\left(\logb \val g\right)\inv(P)}\omega_\flat\\
&\text{and}\\
\std\left(\int_{\left(\loga \abs g\right)\inv(\Pi_\epsilon)}
\abs{\omega}\right)
&\longrightarrow
\int_{\left(\logb \val g\right)\inv(P)}\val{\omega_\flat}
\end{align*}
when the positive \emph{standard}
number $\epsilon$
belongs to $\Lambda(g,\Pi)$ and tends to $0$. 

Therefore if the result holds for $(Q_R,h)$ it holds for $(\Pi,g)$; we thus can replace $\Pi$ by $Q_R$ and $g$ by $h$, 
and then enlarge $f$ (which is harmless) so that $g=f$.  We keep the notation $P=\std(\Pi)$ and
\[W=\left(\logb \val g\right)\inv(P)
=\left(\logb \val f\right)\inv(P)\;\]
note that we have $\Pi=P_R$ and $\left(\logb \val f\right)(W)=P$.

\subsubsection{Arguing piecewise on $P$}\label{sss-arg-piece}
To allow for more flexibility in the proof, we shall
need to argue piecewise on $P$. We explain here why it is possible;
the key point are once again (3) (\ref{theo-negligible}), and the additivity of integrals in 
both frames.

Assume that we are given a finite covering $(P_i)_{i\in I}$ of
$P$
by pseudo-polyhedra,
and that for every non-empty
subset $J$ of $I$, statements (3) (a) and (3) (b) hold for 
$(P_J,f)$ with $P_J:=\bigcap_{i\in J}P_i$. Then these statements hold for $(P,f)$. 

Indeed, for every $i$  set $\Pi_i=P_{i,R}$, and every non-empty subset $J$ of $I$, 
set $\Pi_J=P_{J,R}$.
For every positive standard $\epsilon$
we have $\Pi_\epsilon=\bigcup_i \Pi_{i,\epsilon}$. Now let $J$ be a non-empty subset of $I$. 

If $P_J=\emptyset$
then for $\epsilon$ small enough we have $\bigcap_{i\in J}\Pi_{i,\epsilon}=\emptyset$. 
If $P_J\neq \emptyset$ then 
by definability and compactness there exists two positive real number $A$ and $\eta$ such that 
\[P_{J,\epsilon}\subset \bigcap_{i\in J}P_{i,\epsilon}\subset P_{J, A\epsilon}\]
for all positive real number $\epsilon<\eta$ which implies (by model-completeness
of {\sc doag}) that
\[\Pi_{J,\epsilon}\subset \bigcap_{i\in J}\Pi_{i,\epsilon}\subset \Pi_{J, A\epsilon}\]
for every positive $\epsilon<\eta$ in $R$

The difference
\[\int_{\left(\loga \abs f\right)\inv(\Pi_\epsilon)}\omega
-\sum_{J
\neq \emptyset}(-1)^{\abs J+1}\int_{\left(\loga \abs f\right)\inv(\Pi_{J,\epsilon})}\omega\]
can be rewritten 
\[\sum_{J
\neq \emptyset}(-1)^{\abs J+1}\left(\int_{\bigcap_{i\in J}\left(\loga \abs f\right)\inv(\Pi_{i,\epsilon})}
\omega-\int_{\left(\loga \abs f\right)\inv(\Pi_{J,\epsilon})}\omega\right).\]
It now follows from (3) (c) (which has already be proven) and from the inclusions 
$\Pi_{J,\epsilon}\subset \bigcap_{i\in J}\Pi_{i,\epsilon}\subset \Pi_{J, A\epsilon}$ (which hold for 
$\epsilon$ small enough) that 
\[\std\left(\int_{\bigcap_{i\in J}\left(\loga \abs f\right)\inv(\Pi_{i,\epsilon})}
\omega-\int_{\left(\loga \abs f\right)\inv(\Pi_{J,\epsilon})}\omega\right)
\longrightarrow 0\]
when $\epsilon \longrightarrow 0$ (and remains standard). 
Thus
\[\std\left(\int_{\left(\loga \abs f\right)\inv(\Pi_\epsilon)}\omega
-\sum_{J
\neq \emptyset}(-1)^{\abs J+1}\int_{\left(\loga \abs f\right)\inv(\Pi_{J,\epsilon})}\omega\right)
\longrightarrow 0\]
when $\epsilon\longrightarrow 0$
As statements (3) (a) and (3) (b) hold for every $P_J$,
this implies that 
\[\std\left(\int_{\left(\loga \abs f\right)\inv(\Pi_\epsilon)}\omega\right)\longrightarrow \sum_J
(-1)^{\abs J+1}
\int_{\left(\logb \val f\right)\inv(P_J)}
\omega_\flat=\int_{\left(\logb \val f\right)\inv(P)}
\omega_\flat\]
when $\epsilon\longrightarrow 0$.

We prove in the same way that 
\[\std\left(\int_{\left(\loga \abs f\right)\inv(\Pi_\epsilon)}\abs \omega\right)\longrightarrow
\int_{\left(\logb \val f\right)\inv(P)}
\val{\omega_\flat}\]
when $\epsilon\longrightarrow 0$.

\subsubsection{}\label{sss-good-piece}
Being allowed to argue piecewise on $P$, we now would like
to cut it into finitely many pieces
as nice as possible. This will be achieved by exhibiting a finite covering $(P_i)$
of $P$ by pseudo-polyhedra such that for every $i$ the following hold: 

\begin{itemize}[label=$\diamond$]
\item for  every pair
$(I,J)$ of subsets of $\{1,\ldots, m\}$ of cardinality $n$, either 
$\phi_{I,J}$ is identically zero on $P_i$, either for every $(x_1,\ldots, x_m)\in P_i$ and every 
$j\in I\cup J$ we have $x_j\neq -\infty$; 
\item there exists a subset $E$ of $\{1,\ldots, m\}$ such that:
\begin{itemize}[label=$\bullet$]
\item for every $(x_1,\ldots, x_m)\in P_i$ and every 
$j\in E$ we have $x_j\neq -\infty$; 
\item for every pair $(I,J)$ of subsets of $\{1,\ldots, m\}$ of cardinality $n$, 
there exists a compactly supported smooth function 
$\psi_{I,J}$ on $\R^E$ such that  for every $(x_1,\ldots, x_m)\in P_i$ 
one has 
$\phi_{I,J}(x_1,\ldots, x_m)=\psi_{I,J}(x_j)_{j\in E}$. 
\end{itemize}
\end{itemize}

Let us explain how this can be done. 
Let $\xi$ be a point of $P$
and let $I$ and $J$ be two subsets of $\{1,\ldots, m\}$ 
of cardinality $n$. By the very definition of $(I\cup J)$-vanishing
reasonably smooth functions, 
there exists a pseudo-polyhedral neighborhhod $Q$ of $x$ in $P$
such that 
\begin{itemize}[label=$\diamond$]
\item either 
$\phi_{I,J}$ is identically zero on $Q$, either for every $(x_1,\ldots, x_m)\in Q$ and every 
$j\in I\cup J$ we have $x_j\neq -\infty$; 
\item there exists a subset $E$ of $\{1,\ldots, m\}$ such that:
\begin{itemize}[label=$\bullet$]
\item we have $x_j\neq -\infty$ for every $(x_1,\ldots, x_m)\in Q$ and every 
$j\in E$ ; 
\item 
there exists a compactly supported smooth function 
$\psi$ on $\R^E$ such that  for every $(x_1,\ldots, x_m)\in Q$ 
one has 
$\phi_{I,J}(x_1,\ldots, x_m)=\psi(x_j)_{j\in E}$
\end{itemize}
\end{itemize}
(note that \emph{a priori} $\psi$ is a smooth function defined on an open
neighborhood of the projection of $Q$
to $\R^E$, but since the latter is compact
we can assume that $\psi$ is defined on the whole of $\R^E$ and compactly supported). 
We now conclude by compactness of $P$. 

\subsubsection{}
In view of \ref{sss-arg-piece}
and of \ref{sss-good-piece}, we can assume that there exists a subset $E$ of $\{1,\ldots, m\}$ 
satisfying the following: 
\begin{itemize}[label=$\diamond$]
\item for all $(x_1,\ldots, x_m)$ in $P$ and all $j\in E$, we have
$x_j\neq -\infty$; 
\item one can in fact write 
\begin{align*}
\omega&=\sum_{I,J}
\phi_{I,J}\left(\loga \abs{f_j}\right)_{j\in E}\
\dl\abs{f_I}\wedge \dArg f_J\\
&\textnormal{and}\\
\omega_\flat&=\sum_{I,J}
\phi_{I,J}\left(\logb \val{f_j}\right)_{j\in E}\
\di \logb\val{f_I}\wedge \dc \log \val{f_J}
\end{align*}
where $I$ and $J$
run through the set of subsets of $E$
of cardinality $n$, and where the $\phi_{I,J}$ are smooth, compactly supported
functions on $\R^E$. 
\end{itemize}
We note that the functions $f_j$ with $j\in E$ are invertible on the analytic
domain $W$; we set 
$Q=(\logb \val{f_E})(W)$; this is a compact polyhedron of $\R^E$ which can also be described as the
image of $P$ under the projection to $(\R\cup\{-\infty\})^E$.

We denote by $\xi$ the Lagerberg form 
\[\sum_{I,J}
\left(\frac 1{\lambda_\flat}\right)^n\phi_{I,J}(x_j/\lambda_\flat)_{j\in E}\
\di x_E\wedge \dc x_E\] on $\lambda_\flat Q$; by construction, $\omega_\flat=f_E^*\xi$.

\subsubsection{}\label{case-dimp-small}
We first consider the case where $\dim Q<n$. In this case the $(n,n)$-form 
$\xi$ on $\lambda_\flat Q$ is zero, and 
it suffices to prove that 
\[\std\left(\int_{\left(\loga \abs g\right)\inv(\Pi_\epsilon)}\abs
\omega\right)
\longrightarrow 0\]
when $\epsilon\longrightarrow 0$. 
This will follow quite easily from the rough estimates of \ref{proof-bounded}.

Fix $I$ be any subset of $E$ of cardinality $n$. For every positive standard real number $\epsilon$,
let $Q^I_\epsilon$ denote the image of $Q_\epsilon$ under the projection map $\R^E\to \R^I$. The inequality
$\dim Q<n$ implies that $\mathrm{Vol}(Q^I_\epsilon)\longrightarrow 0$ when $\epsilon\longrightarrow 0$. 

Now for every standard positive $\epsilon$ we have the inclusion 
\[\left(\loga \abs {f_I}\right)\left(\left(\loga \abs f\right)\inv(\Pi_\epsilon)\right)
\subset Q^I_{2\epsilon, R}.\]
It follows that 
\[\frac 1{\lambda^n}\int_{\abs{f_I}\left(\left(\loga \abs f\right)\inv(\Pi_\epsilon)\right)}
\frac{\d \rho_1}{\rho_1}\wedge\ldots
\wedge \frac{\d \rho_n}{\rho_n}\leq \mathrm{Vol}(Q^I_{2\epsilon}).\]

Since this holds for all $I$, this implies in view of inequality  (\ref{equation-reference}) of paragraph \ref{proof-bounded}
that \[\std\left(\int_{\left(\loga \abs g\right)\inv(\Pi_\epsilon)}\abs
\omega\right)
\longrightarrow 0\]
when $\epsilon\longrightarrow 0$.

\subsubsection{}We are now going to describe two general methods which we shall use several times
to make the situation simpler.
The first one essentially combines the fact that the statements we want to prove can 
be checked piecewise on $P$ (\ref{sss-arg-piece})
and the fact that they hold as soon as $\dim Q<n$ 
(\ref{case-dimp-small}); the second one follows easily from Remark 
\ref{rem-limit-moregeneral}.

\paragraph{Arguing cellwise
on $Q$}\label{arguing-cellwise}
Let $(Q_i)$ be a finite covering of $Q$ by compact
polyhedra whose pairwise intersections are of dimension 
$<n$ ; for every $i$, let $P_i$ be the pre-image of $Q_i$
in $P$. Assume that statements (3) (a) and (3) (b) hold for every $P_i$;
then they hold for $P$. Indeed, let $I$ be any finite set of indices of cardinality at least 2. Then
the projection of $\bigcap_{i\in I}P_i$ to $(\R\cup\{-\infty\})^E$ is equal to 
 $\bigcap_{i\in I}Q_i$, so it is of dimension $<n$. Therefore the theorem holds
 for $\bigcap_{i\in I}P_i$ in view of \ref{case-dimp-small}; it now follows from
 \ref{sss-arg-piece}
 that it holds for $P$.

\paragraph{Affine change of coordinates}\label{affine-change}
Let $M=(m_{ij})$ be a matrix belonging to $\mathrm M_E(\Z)$
with non-zero determinant, and let $v=(v_j)_j\in \R^E$. 
For every point
$x=(x_1,\ldots, x_m)$ in $P$ we set 
$Mx=(y_1,\ldots, y_m)$ with $y_i=x_i$ if $i\notin E$, 
and $y_i=\sum_{j\in I}m_{ij}x_j$ otherwise. 
For $i\notin E$ we set $h_i=f_i$; for $i\in E$ we
set $h_i=\abs t ^{v_i}\prod_{j\in I}f_j^{m_{ij}}$. 

Set 
$P'=MP+v$; this is a pseudo-polyhedron.
By expressing $\loga \abs h$, $\dl \abs h$ and $\da h$ in terms of
$\loga \abs f$, $\dl \abs f$ and $\da f$, and the same with
$\logb$ instead of $\loga$ and $\val \cdot$ instead of $\abs \cdot$,
we get equalities
\[\omega=\sum_{I,J}\psi_{I,J}
\left(\loga \abs {h_1},\ldots, \loga \abs {h_m}\right)\dl \abs{h_I}\wedge \dArg h_J\]
and
\[\omega_\flat=\sum_{I,J}\psi_{I,J}\left(\logb\val {h_1},\ldots, \logb \val {h_m} \right)\di \logb \val{h_I}\wedge \dc
\log \val{h_J}.\]
Assume that statements (3)(a) and (3)(b) hold for
$(P'_R,h)$. We are going to prove that they hold for $(\Pi,f)$. 

There exist two standard positive real numbers $A$ and $B$ with $A<B$ 
such that
\[\left(\loga \abs h\right)\inv(P'_{R,A\epsilon})\subset
\left(\loga \abs f\right)\inv(\Pi_\epsilon)
\subset \left(\loga \abs h\right)\inv(P'_{R, B\epsilon})\]
for $\epsilon$ small enough. Then
\[\std\left(\int_{\left( \loga \abs f\right)\inv(\Pi_\epsilon)}\omega\right)\longrightarrow
\int_{\left(
\logb \val h\right)\inv(P')}
\omega_\flat= \int_{\left(
\logb \val f\right)\inv(P)}
\omega_\flat\]
and
\[\std\left(\int_{\left(\loga \abs f\right)\inv(\Pi_\epsilon)}\abs \omega\right)\longrightarrow\
\int_{\left(
\logb \val h\right)\inv(P')}
\val{\omega_\flat}=\int_{\left(
\logb \val f\right)\inv(P)}
\val{\omega_\flat}\]
by Remark \ref{rem-limit-moregeneral}. 

\subsubsection{}\label{beth-omega-zero}
We assume now that $(\omega_\flat)|_W=0$,
which means that the form $\xi$ on $\lambda_\flat Q$ is zero, 
and we are going to prove  \textnormal{(3) (a)} and
\textnormal{(3) (b)} under this assumption. We will use the fact that these statements
hold whenever $\dim Q<n$ (\ref{case-dimp-small}), that they can be checked cellwise on $Q$ 
(\ref{arguing-cellwise}), that they can be proved after an affine change of coordinates 
(\ref{affine-change}), and that $\mathrm J$ acts trivially on $\mathsf A^{n,n}$; and then
we will ultimately rely on the estimates in 
\ref{proof-bounded}.

We want to prove
that 
\[\std\left(\int_{\left(\loga \abs f\right)\inv(\Pi_\epsilon)}\abs \omega\right)
\longrightarrow 0\]
when $\epsilon\longrightarrow 0$. By considering a cell
decomposition of $Q$
and using  \ref{arguing-cellwise}, 
we reduce to the case where $Q$ is a cell. 
If $\dim Q<n$
we already know that the the required statement holds (\ref{case-dimp-small}); we can thus assume
that $\dim Q=n$. And in view of \ref{affine-change}
we are allowed to perform an affine change of the coordinates indexed by $E$
with integral linear part; hence we can assume
that there exists a subset $E_0$
of $E$ of cardinality $n$ such that 
$Q$ is contained in the subspace defined by the equations
$x_i=0$ for $i$ running through $E\setminus E_0$. 
The assumption that $\xi=0$ now simply
means
that $\phi_{E_0,E_0}|_Q=0$.

We fix two subsets $I$ and $J$ of $E$, both
of cardinality $n$. Let 
$\omega_{I,J}$ be the form $\phi_{I,J}
\left(\loga \abs{f_1},\ldots,\loga \abs{f_m}\right)
\dl\abs{f_I}\wedge \dArg f_J$. 
It suffices to prove that 
\[\std\left(\int_{\left(\loga \abs f\right)\inv(\Pi_\epsilon)}
\abs{\omega_{I,J}}\right)\longrightarrow 0\]
when $\epsilon\longrightarrow 0$.

\paragraph{The case where $I=J=E_0$}
We then
have $\phi_{IJ}|_Q=0$. Let $P'$ be the pre-image of $\partial Q$ on $P$.
Since $\phi_{I,J}|_Q=0$ we have 
\[\std\left(\int_{\left(\loga \abs f\right)\inv(\Pi_\epsilon)}\abs{\omega_{I,J}}\right)
=
\std\left(\int_{\left(\loga \abs f\right)\inv(( P'_R)_\epsilon)}\abs{\omega_{I,J}}\right)\]
for all $\epsilon$, and since $\dim \partial Q<n$ the result  follows from \ref{case-dimp-small}.

\paragraph{The case where $I\neq E_0$}\label{I-not-1n}
Choose $i\in I\setminus E_0$. Then since $x_i$ vanishes
identically on $P$ we have for every $\epsilon$
\[\abs {f_i}\left(\left(\loga \abs f\right)\inv(\Pi_\epsilon)\right)
\subset \left[\abs t^{2\epsilon},\abs t^{-2\epsilon}\right].\]
Therefore
there exists some positive standard real number $A$ such that 
\[\abs{f_I}\left(\left(\loga \abs f\right)\inv(\Pi_\epsilon)\right)
\subset \left[\abs t^{2\epsilon},\abs t^{-2\epsilon}\right]^{\{i\}}
\times  \left[\abs t^A,\abs t^{-A}\right]^{I\setminus \{i\}}\]
for $\epsilon$ small enough (see \ref{proof-bounded}). In view of inequality
(\ref{equation-reference})
of \loccit, it follows that \[\std\left(\int_{\left(\loga \abs f\right)\inv(\Pi_\epsilon)}
\abs{\omega_{I,J}}\right)\longrightarrow 0\]
when $\epsilon\longrightarrow 0$.

\paragraph{The case where $J\neq E_0$}
Since the operator $\mathrm J$ acts trivially on $\mathsf A^{n,n}$
we have 
\begin{align*}
\omega_{I,J}&=\mathrm J(\omega_{I,J})\\
&=(-1)^n\phi_{I,J}
\left(\loga \abs{f_1},\ldots, \loga \abs f_m\right)
\dArg f_I\wedge \dl \abs{f_J}\\
&=(-1)^{n^2+n}\phi_{I,J}
\left(\loga \abs{f_1},\ldots, \loga \abs f_m\right)
\dl\abs{f_J}\wedge \dArg f_I\\
&=\phi_{I,J}
\left(\loga \abs{f_1},\ldots, \loga \abs f_m\right)
\dl \abs{f_J}\wedge \dArg f_I.
\end{align*}
Hence we reduce to the case considered in \ref{I-not-1n}. 

\subsubsection{Proof of
\textnormal{(3) (a)} and 
\textnormal{(3) (b)}
in the general case}\label{9.1.11}
Now comes the core of our proof; this is the only step in which one uses the actual definition of the non-archimedean integrals (the former ones used only basic properties like additivity or obvious norm estimates). Using once again the flexibility allowed by the former steps (which enables us to argue cellwise, see  \ref{arguing-cellwise}; or to modify the explicit writing of $\omega$, provided $(\omega_\flat)|_W$ remains unchanged, see
\ref{beth-omega-zero}), we will simplify slightly our assumptions, and then reduce to the case in which the integral $\int_W \omega_\flat$ can be computed by an explicit formula. The latter involves a classical real integral and the degree $d$ of an étale map between Berkovich spaces over some skeleton $\Sigma$, and the main point of our reasoning consists in interpreting this degree
$d$  in the non-standard archimedean world; this is achieved by showing that our étale map also has degree $d$ above ``sufficiently many" $C$-points (over which the degree is now simply the naive one, namely the cardinality of the fibers, which makes sense in our non-standard archimedean world as well).

By considering a cell
decomposition of $Q$
and using  \ref{arguing-cellwise}, 
we reduce to the case where $Q$ is a cell. 
If $\dim Q<n$
we already know that the required statement holds (\ref{case-dimp-small}); we can thus assume
that $\dim Q=n$. And in view of \ref{affine-change}
we are allowed to perform an affine change of the coordinates indexed by $E$
with integral linear part, we can assume
that there exists a subset $E_0$
of $E$ of cardinality $n$ such that 
$Q$ is contained in the subspace defined by the equations
$x_i=0$ for $i$ running through $E\setminus E_0$. 
Otherwise said, $Q=Q_0\times\{0\}^{E\setminus E_0}$ for
some convex polyhedron $Q_0$ of $\R^{E_0}$.
Since $\dim Q=n$
by our assumption, $\dim Q_0=n$.
Now $\xi|_{\lambda_\flat Q}$ can be written
\[\frac1{\lambda_\flat^n}\phi\left(\frac{x_j}{\lambda_\flat}\right)_{j\in E_0}\di x_{E_0}\wedge\dc x_{E_0}\]
(with $\phi$ smooth).
Set
\[\omega'=\phi
\left(\loga \abs{f_j}\right)_{j\in E_0}
\dl \abs{f_{E_0}} \wedge \dArg{f_{E_0}}
\]
and
\[\omega'_\flat=\phi
\left(\logb \val{f_j}\right)_{j\in E_0}
\di \logb \val{f_{E_0}} \wedge \dc \logb \val {f_{E_0}}.\]
Then $(\omega_\flat- \omega'_\flat)|_W=0$, and 
 in view of \ref{beth-omega-zero} 
 this implies that 
 \[\std\left(\int_{\left(\loga \abs f\right)\inv(\Pi_\epsilon)}
\abs{\omega-\omega'}\right)\longrightarrow 0\]
when $\epsilon\longrightarrow 0$.
We can thus replace $\omega$ with $\omega'$, hence reduce to the case where $\omega$ is of the form
\[\omega=\phi
\left(\loga \abs{f_j}\right)_{j\in E_0}
\dl \abs{f_{E_0}} \wedge \dArg f_{E_0}.\]

Let $\mu \colon V\to \gm^{E_0}$ be the map induced by the functions $f_j$
for $j\in E_0$.
Since $\dim Q_0=n$
the tropical dimension of $f_{E_0}$ is $n$, which forces $\mu$ to be dominant, hence generically
étale, because both schemes involved are integral of the same
dimension and the ground field is of characteristic zero. 
Let $\mathscr Z$ be a proper Zariski-closed subset of $\gm^{E_0}$
such that $\mu$ is finite étale over the open complement of $\mathscr Z$. 

Let $D$ be the affinoid domain of $\gm^{E_0,\mathrm{an}}$
defined by the condition $\logb \val T\in Q_0$ and let $D'$ be the open subset
of $D$ defined by the condition 
$\logb \val T\in\mathring Q_0$. Let also $\mathrm{sk}$
denote the canonical homeomorphism between $\R^{E_0}$ and the skeleton of $\gm^{E_0,\mathrm{an}}$. 
The images under $\logb \val {f_{E_0}}$ of the boundary of $D$ and the image of $\mathscr Z\an$
under $\logb \val T$ are of dimension $<n$. Since we can argue cellwise 
on $Q$ (\ref{arguing-cellwise})
we may thus assume the following: 
\begin{itemize}[label=$\diamond$]
\item $(\logb \val {f_{E_0}})(\partial W)\subset \partial Q_0$; 
\item the morphism $W\times_{\gm^{E_0,\mathrm{an}}}D'\to D'$ is finite étale. 
\end{itemize}
These two conditions imply that $\mu|_W$ is finite étale above $D'$; since the latter is connected
(it admits a deformation retraction to $\mathrm{sk}(\lambda_\flat\mathring Q_0)$),
the degree of $\mu|_W$ over $D'$
is constant; let us denote it by $d$. The map $\mu|_V$ is in particular
finite and flat of degree $d$
above every point of $\mathrm{sk}(\lambda_\flat \mathring Q_0)$, whence the equalities
\begin{align*}
\int_W \omega_\flat&= (-1)^{n(n-1)/2}\frac d{\lambda_\flat^n}\int_{\lambda_\flat Q_0}
\phi\left(\frac{x_j}{\lambda_\flat}\right)_{j\in E_0}
\d {x_{E_0}}\\
&=(-1)^{n(n-1)/2}d\int_{Q_0}\phi(x_j)_{j\in E_0}\d {x_{E_0}}
\end{align*}
and
\begin{align*}
\int_W \val{\omega_\flat} &=\frac d{\lambda_\flat^n}\int_{\lambda_\flat Q_0}\left|\phi\left(\frac{x_j}{\lambda_\flat}\right)_{j\in E_0}
\right|\d x_{E_0}\\
&=d\int_{Q_0}\abs{\phi(x_j)}_{j\in E_0}\d x_{E_0}
\end{align*}

\paragraph{}
By construction, every point of $D'(C)$ has $d$ pre-images under $\mu$
in $W(C)$. We would like to exploit this fact
in the non-standard archimedean setting; the point is that $D'(C)$ and $W(C)$ are
{\sc acvf}-definable, but not {\sc rcf}-definable;
so we will first have to ``approximate" them by {\sc rcf}-definable subsets for which this statement remains true.

Let $\mathfrak n$ be the set of negligible
elements of $R$. Let $\eta$ be a positive standard real number
and set $Q_\eta=\mathring Q_0\setminus (\partial Q_0)_\eta$.
Let $x\in \left(\loga \abs T\right)\inv(Q_{\eta, R})$. The point
$x$ belongs to $(\logb \val T)\inv(\mathring Q_0)$, hence
the intersection
\[\mu\inv(x)\cap \left(\loga \abs f\right)\inv(\Pi+\mathfrak n^\ell)=\mu\inv(x)\cap \left(\logb \val f\right)\inv(P)
=\mu\inv(x)\cap W\]
has exactly $d$ elements.
Let $m(x)$ and $M(x)$ be respectively the
greatest lower bound and the least upper bound 
of the set $\Theta$ of those $u\in [1,\abs t\inv]$ such that
\[\mu\inv(x)\cap  \abs f\inv(\abs t^{-\Pi}\cdot [u\inv, u])\]
has exactly $d$ elements. Since $\Theta$ is definable, if follows from the above
that
\[\std\left(\loga m(x)\right)=0\;\;\text{and}\;\;\std\left(\loga M(x)\right)>0.\]
Now $m$ and $M$ are definable functions;  as a consequence, 
the
greatest lower bound of $M$ on $\left(\loga \abs T\right)\inv(Q_{\eta, R})$
is equal to $\abs t^{B(\eta)}$ for some $B(\eta)$ with negative standard part, and the least
upper bound
of $m$ on $\left(\loga \abs T\right)\inv(Q_{\eta, R})$ is equal to
$\abs t^{b(\eta)}$ for some negative
negligible $b(\eta)$. 

\paragraph{}
Let $\delta$ be a positive real number. Choose $\eta$
such that the volume of $(\partial Q_0)_{2\eta}$ is smaller than $\delta$. Let $\epsilon$ be a positive
real number such that $\epsilon<\min(B(\eta),\eta)$.
Let $\Pi'$ be the subset of $\Pi_\epsilon$ consisting of points whose
projection to the variables in $E_0$ belongs to $Q_\eta$, and let $\Pi''$
be the complement of $\Pi'$ in $\Pi_\epsilon$. 
One has
\[\int_{\left(\loga \abs f\right)\inv(\Pi_\epsilon)}\omega
=\int_{\left(\loga \abs f\right)\inv(\Pi')}\omega+
\int_{\left(\loga \abs f\right)\inv(\Pi'')}\omega.\]
It follows
from inequality (\ref{equation-reference})
of \ref{proof-bounded}
that there exists a positive standard real number $M$ (independent of $\delta, \eta, \epsilon....$)
such that $\int_{\left(\loga \abs f\right)\inv(\Pi'')}\abs \omega\leq M\mathrm{Vol}((\partial Q_0)_{2\eta})
\leq M\delta$. 

Now since $b(\eta)<\epsilon<B(\eta)$ the map $\mu$ induces a $d$-fold covering  
\[\left(\loga \abs f\right)\inv(\Pi')\longrightarrow \left(\loga \abs T
\right)\inv(Q_{\eta,R}),\]
so
\begin{align*}
\int_{\left(\loga \abs f\right)\inv(\Pi')}\omega
&=
\frac d{\lambda^n}
\int_{\left(\loga \abs T\right)\inv(Q_{\eta,R})}
\phi\left(\loga \abs{T} \right)\dl \abs T\wedge
\dArg{T}\\
&=(-1)^{n(n-1)/2}d\int_{Q_{\eta,R}}\phi(x_j)_{j\in E_0}\d x_{E_0}\\
&=(-1)^{n(n-1)/2}d\int_{Q_\eta}\phi(x_j)_{j\in E_0}\d x_{E_0}.
\end{align*}
Therefore
\begin{align*}
\left| \int_{\left(\loga \abs f\right)\inv(\Pi')}\omega
-\int_W\omega_\flat\right|&\leq \sup_{P_0}\abs \phi d\mathrm{Vol}(Q_0\setminus Q_\eta)\\
&\leq d\mathrm{Vol}((\partial Q_0)_{2\eta}) \sup_{Q_0}\abs \phi\\
&\leq \delta d\sup_{Q_0}\abs \phi.\end{align*}
Hence
\[\left|\int_{\left(\loga \abs f\right)\inv(\Pi_\epsilon)}\omega
-\int_W\omega_\flat\right|\leq \delta(M+d\sup_{Q_0}\abs \phi).\]
One shows exactly in the same way that
\[\left|\int_{\left(\loga \abs f\right)\inv(\Pi_\epsilon)}\abs \omega
-\int_W\abs{ \omega_\flat}\right|\leq \delta(M+d\sup_{Q_0}\abs \phi).\]
We thus have proved that 
\begin{align*}
\std\left(\int_{\left(\loga \abs f\right)\inv(\Pi_\epsilon)}\omega\right)
&\longrightarrow \int_{\left(\logb \val f)\right)\inv(\std(\Pi))}\omega_\flat\\
&\text{and}\\
\std\left(\int_{\left(\loga \abs f\right)\inv(\Pi_\epsilon)}\abs\omega\right)
&\longrightarrow \int_{\left(\logb \val f)\right)\inv(\std(\Pi))}\val{\omega_\flat}.
\end{align*}
when the standard positive real number $\epsilon$ tends to zero. 

\subsection{Construction of the map $\omega\mapsto \omega_\flat$}
\label{construction-map}
It
is clear that there is at most one such morphism of sheaves.
We are going to prove that there is actually one
by using our comparison theorem
for integrals and the fact that forms are naturally embedded into currents on
Berkovich spaces.
Let $p$ and $q$ be two integers. 
Let 
$U$ be a Zariski-open subset of
$X$.
Let 
$\omega$ be a 
section of $\mathsf A^{p,q}$
on $U$
that can be written
\[\omega=\sum_{\abs I=p,\abs J=q}
\phi_{I,J}\left(\loga \abs{f_1},\ldots,\loga \abs{f_m}
\right)\dl\abs{f_I}\wedge \dArg f_J
\]
with $f_i$ regular functions on $U$
and $\phi_{I,J}$ an $(I\cup J)$-vanishing
reasonably smooth function in
$\mathscr S^{I,J,(f_i)}$
for each $(I,J)$
(we shall say for short that $\omega$ is \emph{tropical}
on $U$).

Let $\omega_\flat$
be the section 
\[\sum_{I,J}\phi_{I,J}\left(\logb \val{f_1},
\ldots, \logb \val{f_m}\right)\di \logb \val{f_I}\wedge\dc \log \val{f_J}\]
of $\mathsf B^{p,q}$ on $U$.
It suffices to show that $\omega_\flat$
only depends on $\omega$,
and not on the particular way we have written it.
One immediately reduces to
proving that $\omega_\flat=0$ if $\omega=0$;
for that purpose we
suppose that $\omega_\flat \neq 0$,
and we are going to prove that $\omega\neq 0$.
Since $\omega_\flat\neq 0$ and since $U\an$ is boundaryless, there exists a smooth compactly supported
$(n-p,n-q)$ form $\eta$ on $U\an$ such that $\int_{U\an}\omega_\flat\wedge \eta\neq 0$ (\cite{chambertloir-d2012}, Cor. 4.3.7). 
Every point of $U\an$ has a basis of affinoid neighbourhoods $V$ having the following properties:

\begin{itemize}[label=$\diamond$]
\item the restriction $\eta|_V$
can be written 
\[\sum_{\abs I=n-p,\abs J=n-q}
\psi_{I,J}\left(
\logb \val{g_1}, \ldots, \logb \val{g_\ell}\right)
\di \log \val {g_I}\wedge \dc \log \val{ g_J}\]
with $g_i$ regular functions on $V$
and  $\psi_{I,J}$ compactly supported
smooth functions on $\R^\ell$. 
\item The domain $V$ is a Weierstra\ss~domain
of $\Omega\an$ for some open subscheme $\Omega$ of $U$
(see \ref{aff-neighb-weierstr}). 
\end{itemize}

Then we can find such a $V$ with $\int_V\omega_\flat \wedge \eta\neq 0$. Since $V$ is a Weierstra\ss~domain in $\Omega\an$, 
and since $\eta|_V$ does not change if we replace each $g_i$ by a function having the same norm on $V$
(\cite{chambertloir-d2012}, Lemme 3.1.10),
we can assume by approximation that each of the functions $g_i$ comes from a function belonging to $\mathscr O(\Omega)$,
which we still denote
by $g_i$. Then by replacing $\Omega$ by the intersection of the sets $D(g_i)$, we can assume that $g_i\in \mathscr O(\Omega)\gpm$ for all $i$. 

Now set \[\eta^\sharp=
\sum_{\abs I=n-p,\abs J=n-q}
\psi_{I,J}\left(
\loga \abs{g_1},\ldots, \loga \abs{g_\ell}\right)
\dl \abs {g_I}\wedge \dArg {g_J}.\]
This is a section of $\mathsf A^{n-p,n-q}$
on $\Omega$. By \ref{proof-integration-compatible}
the integral $\int_V\omega_\flat \wedge \eta$ can be expressed as a limit of standard parts of integrals of $\omega|_\Omega \wedge \eta^\sharp$
on suitable definably compact
semi-algebraic subsets of $\Omega(C)$. Then these integrals cannot be all equal to zero, which implies that 
$\omega|_\Omega \wedge \eta^\sharp\neq 0$, and a fortiori that $\omega \neq 0$. 
\textbf{We thus are done with the proof in the particular
setting of \ref{setting-omega-explicit}}.

\subsection{Proof of (3)}
We are now going to prove (3) in the general case. 
The reasoning is tedious but rather formal; it uses as a crucial input the particular case handled above in
\ref{proof-integration-compatible},
together with the additivity of the integrals in both settings. 

For all $i$ we set $P_i=\std(\Pi_i)$
and $W_i=\left(\logb \val {g_i}\right)\inv(P_i)\subset V_i\an$; 
we also set $W=\bigcup_i W_i$.

\subsubsection{Reduction to the case where $\Pi_i=P_{i,R}$
for all $i$}
Assume that (3)
holds for $(P_{i,R})_i$. 
Since $\std(\Pi_i)=\std(P_{i,R})$,
there exists a positive negligible element
$a$
such that $\Pi_i\subset P_{i,R,a}$ and $P_{i,R}\subset \Pi_{i,a}$
for every $i$. 
Let $\epsilon$ be a standard positive real number. 
By the above 
\[P_{i,R,\epsilon/2}\subset \Pi_{i,\epsilon}\subset P_{i,R,2\epsilon}\]
for all $i$. 
Then it follows from Remark \ref{rem-limit-moregeneral}
that statements (3)(a) and (3)(b) hold for $(\Pi_i)$. 

We then have for all standard $\epsilon>0$ and all $i$ 
\[\bigcup_i \Pi_{i,\epsilon}\setminus \bigcup_i \Pi_{i,\alpha+a}\subset \bigcup_i P_{i,R,2\epsilon}\setminus
\bigcup_i P_{i,R,\alpha},\]
so (3)(c) holds for $(\Pi_i)_i$ with the negligible element $\alpha+a$ instead of $\alpha$. 
We henceforth assume from now on that $\Pi_i=P_{i,R}$
for all $i$.

\subsubsection{}
Fix an index $i$. Let $x$ be a point of $W_i$. There exists a Zariski-open subset $\Omega$ of $V_i$
on which $\omega$ is tropical, and such that $x\in \Omega\an$. The point $x$ has a Weierstra\ss ~neighborhood
$\Omega'$ in $\Omega\an$; by construction, $\Omega'\cap W_i$ is of the form 
$(\logb \val h)\inv(Q)$ for some family $h=(h_1,\ldots, h_N)$ of regular functions on $\Omega'$
and some pseudo-polyhedron $Q$ of
$(\R\cup\{-\infty\})^N$. 

By compactness, it follows that there exists a finite family $(V_{ij})$ of Zariski-open subsets
of $V_i$ and, for each $(i,j)$, a finite family $h_{ij}=(h_{ijk})_{1\leq k\leq \ell_{ij}}$ of regular functions on $V_{ij}$ and a pseudo-polyhedron
$P_{ij}$ of $(\R\cup \{-\infty \})^{\ell_{ij}}$ such that the following hold: 
\begin{itemize}[label=$\diamond$]
\item for each $(i,j)$, the form $\omega$ is tropical on $V_{ij}$; 
\item $W_i=\bigcup_j W_{ij}$ with $W_{ij}=(\logb \val{h_{ij}})\inv(P_{ij})$. 
\end{itemize}

We set $\Pi_{ij}=P_{ij,R}$; for every non-empty set $I$ of pairs $(i,j)$
we set 
\begin{itemize}[label=$\diamond$]
\item $\ell_I=\sum_{(i,j)\in I}\ell_{ij}$;
\item $\Pi_I=\prod_{(i,j)\in I} \Pi_{ij}\subset (R\cup\{-\infty\})^{\ell_I}$; 
\item $P_I=\prod_{(i,j)\in I} P_{ij}\subset (\R\cup\{-\infty\})^{\ell_I}$; 
\item $V_I=\bigcap_{(i,j)\in I}V_{ij}\;\;\text{and}\;\;W_I=\bigcap_{(i,j)\in I}W_{ij}$.
\end{itemize}
We also denote by $h_I$ the concatenation of the functions $h_{ij}$ for $(i,j)\in I$; this is a family of
$\ell_I$
invertible functions on $V_I$
and $W_I=\left(\logb \val {h_I}\right)\inv(P_I)\subset V_I\an$. 

For every $I$, the form $\omega|_{V_I}$ is tropical. It follows therefore from
\ref{proof-integration-compatible}
that 
\begin{align}
\label{pi-omega}\std\left(\int_{\left(\loga \abs {h_I}\right)\inv(\Pi_{I,\epsilon})}\omega\right)
&\longrightarrow \int_{\left(\logb \val {h_I}\right)\inv(P_I)}\omega_\flat\\
\label{pi-absomega}\std\left(\int_{\left(\loga \abs {h_I}\right)\inv(\Pi_{I,\epsilon})}\abs
\omega\right)
&\longrightarrow \int_{\left(\logb \val {h_I})\right)(P_I)}\val{\omega_\flat}\end{align}
when the positive \emph{standard}
number $\epsilon$ tends to $0$, and
that there exists a positive negligible $\alpha\in R$
such that 
\begin{align}
\label{pi-zero}\std\left(\int_{\left(\loga \abs g\right)\inv(\Pi_{I,\epsilon}\setminus \Pi_{I,\alpha})}\abs
\omega\right)
&\longrightarrow 0\end{align}
when the positive \emph{standard}
number $\epsilon$ tends to $0$.

The equality $W=\bigcup_{(i,j)\in I} V_{ij}$ can be rewritten
\[\bigcup_i 
\underbrace{\left(\logb \val {g_i}\right)\inv(P_i)}_
{\text{understood as contained in }V_i\an}=
\bigcup_{(i,j)}\underbrace{\left(\logb \val {h_{ij}}\right)\inv(P_{ij})}_{\text{Understood
as contained in}\;V_{ij}\an}.\]

If $a$ is a small enough positive real number then 
for every $i,j$ the sets $\left(\logb \val {g_i}\right)\inv(P_{i,a})$
and $\left(\logb \val {h_{ij}}\right)\inv(P_{ij,a})$ are compact in view of assertion (1).
Hence
for $a$ small enough, the infimum $m(a)$ of all positive real numbers $b$ such that 
\[\bigcup_i \left(\logb \val {g_i}\right)\inv(P_{i,a})\subset
\bigcup_{(i,j)}\left(\logb \val {h_{ij}}\right)\inv(P_{{ij},b})\]
is well-defined. This is a {\sc doag}-definable function of $a$ that tends to zero when $a$ tends to zero.
It follows that there exists a positive rational number $\rho$ such that 

\[\bigcup_i \left(\logb \val {g_i}\right)\inv(P_{i,a})\subset
\bigcup_{(i,j)}\left(\logb \val {h_{ij}}\right)\inv(P_{{ij},\rho a})\]
for $a$ small enough. We can perform the same kind of reasoning for the converse inclusion, 
and by taking $\rho$ big enough we can thus assume that we also have 

\[\bigcup_i \left(\logb \val {g_i}\right)\inv(P_{i,\rho a})\supset
\bigcup_{(i,j)}\left(\logb \val {h_{ij}}\right)\inv(P_{ij,a})\]
for $a$ small enough.

We then have for all positive standard real number $a$ the inclusions
\[\bigcup_i
\left(\loga \abs {g_i}\right)\inv(\Pi_{i,a})\subset
\bigcup_{(i,j)}\left(\loga \abs {h_{ij}}\right)\inv(\Pi_{ij,2\rho a})\]
and
\[\bigcup_i\left(\loga \abs {g_i}\right)\inv(\Pi_{i,2\rho a})\supset
\bigcup_{(i,j)}\left(\loga \abs {h_{ij}}\right)\inv(\Pi_{ij,a}).\]
But then by a definability
argument (using \ref{sss-tp-definable}),
there exist a positive negligible 
element $\beta\in R$ and an element $\gamma\in R$ with positive
standard part such that the
above inclusions hold for all elements $a\in R$ with
$\beta\leq a\leq \gamma$. 

By the same kind of arguments,
we can increase $\beta$ and $\rho$
and decrease $\gamma$ 
so that we have for all $I$ and
all $a\in [\beta,\gamma]$ the inclusions
\[\bigcap_{{(i,j)}\in I}
\left(\loga \abs {h_{ij}}\right)
\inv(\Pi_{ij,a})\subset
\left(\loga \abs {h_I}\right)\inv(\Pi_{I,\rho a})\]
and
\[\bigcap_{(i,j)\in I}\left(\loga \abs {h_{ij}}\right)\inv
(\Pi_{ij,\rho a})\supset
\left(\loga \abs {h_I}\right)\inv(\Pi_{I,a}).\]

Together with (\ref{pi-omega}), (\ref{pi-absomega}) and (\ref{pi-zero}) above
and with the additivity of both the archimedean and the Berkovich integrals, this ends the proof of (2). 

\subsection{End of the proof}
It remains to show \textnormal{(1)} and \textnormal{(4)}. 
The proofs essentially consist in standard computations, once granted the existence of our map of complexes and the comparison theorems (3) (a), (b) and (c) for integrals.

We use the assumptions of (1).
Choose a finite open affine cover $(U_i)$ of $U$. For every $i$, let 
$(f_{ij})_j$ be a finite generating family of the $C$-algebra $\mathscr O_X(U_i)$. 
By our assumption on the support of $\omega$ and by Lemma
\ref{lem-compact-open-cover}, there exists $A\in \R$ such that 
the $\omega$ is zero outside the set
\[E_A:=\bigcup_i \left\{x\in U_i(C), \loga\abs {f_{ij}(x)}\leq A\;\text{for all}\;j\right\}.\]
We also set 
\[E_{A,\flat}=\bigcup_i \left\{x\in U_i\an, \logb \val{f_{ij}(x)}\leq A\;\text{for all}\;j\right\}.\]

\subsubsection{Proof of \textnormal{(1)}}\label{sss-compactsupp-compatible}
We are going to prove that $\omega_\flat$ is zero outside $E_{A,\flat}$, 
which will show that it is compactly supported. 

Let $y$ be a point of $U\an\setminus E_{A,\flat}$. The point $y$ belongs to $U_i$ for some $i$. Let 
us choose a neighborhood $V$ of $y$ in $U_i\an \setminus E_{A,\flat}$ of the form
$\logb \val g\inv(P)$ where $g=(g_1,\ldots, g_m)$ is a finite family of regular functions on $U_i$ and where 
$P\subset (\R\cup\{-\infty\})^m$ is a product of intervals, each of which is either of the form $(\lambda, \mu)$ 
or of the form $(-\infty, \mu)$. Up to shrinking $P$ we can assume
that for some $\epsilon>0$ the pre-image 
$\logb \val g\inv(P+[0,\epsilon)^m)$ still avoids $E_{A,\flat}$. Let $\phi$ be a 
reasonably smooth function on $(\R\cup\{-\infty\})^m$ whose support is contained in $P$, which does not vanish 
at $g(y)$, and which takes only non-negative values. We shall prove that the form 
$\phi(\logb \val g)\omega_\flat\in \mathscr A^{p,q}(U_i\an)$ is zero; this will ensure that $\omega_\flat$ vanishes around $y$ 
and thus imply our claim. 

Since $\logb \val g\inv(P+[0,\epsilon)^m)$ is contained in $U\an\setminus E_{A,\flat}$, 
the pre-image $\loga \abs g\inv(P_R)$ avoids $E$. As the support of $\phi$ is contained in $P$ and as
$\omega$ vanishes outside $E$, the form $\phi(\loga \abs g)\omega$
vanishes. But this form belongs to $\mathsf A^{p,q}(U_i)$ and its image in 
$\mathsf B^{p,q}(U_i)$ is precisely $\phi(\logb \val g)\omega_\flat$. The latter is thus zero, as announced. 

\subsubsection{Proof of \textnormal{(4)}}
Assume moreover that $p=q=n$ and let us prove (f) and (g).
It follows from (2), (3) (a) and (3) (b)
that if the standard positive $\epsilon$ is small enough then 
$\int_{E_{A+\epsilon}}\abs \omega$ is bounded, and that 
\[\std\left(\int_{E_{A+\epsilon}}\omega\right) \to \int_{E_{A,\flat}} \omega_\flat\]
and
\[\std \left(\int_{E_{A+\epsilon}}\abs \omega\right) \to \int_{E_{A,\flat}} \val{\omega_\flat}\]
when $\epsilon$ tends to zero (while remaining standard and positive). 

But since $\omega$ is zero outside $E_A$ we have

\[\int_{E_{A+\epsilon}}\omega =\int_{U(C)}\omega \;\;\text{and}\;\;\int_{E_{A+\epsilon}}
\abs \omega =\int_{U(C)}\abs \omega\]
for any $\epsilon$ as above. 
And since $\omega_\flat$ is zero outside $E_{A,\flat}$ by \ref{sss-compactsupp-compatible}, 
we have 

\[\int_{E_{A,\flat}}\omega_\flat =\int_{U\an}\omega_\flat\;\;\text{and}\;\;
\int_{E_{A,\flat}}\val{\omega_\flat} =\int_{U\an}\val{\omega_\flat}.\]
Assertion (4) follows immediately.~$\qed$

\bibliographystyle{smfalpha}
\bibliography{aducros}

\end{document}